\DeclareMathOperator{\cay}{Cay}
\DeclareMathOperator{\rk}{rk}
\DeclareMathOperator{\Span}{Span}
\DeclareMathOperator{\rad}{rad}
\def\r{\mathrm{right}}
\def\@seccntformat#1{\csname the#1\endcsname. } 
\def\@biblabel#1{#1.}
\title{Divisible design graphs from Higmanian association schemes}
\author{Grigory Ryabov}
\address{School of Mathematical Sciences, Hebei Key Laboratory of Computational Mathematics and Applications, Hebei Normal University, Shijiazhuang 050024, P. R. China}
\address{Novosibirsk State Technical University, Novosibirsk, Russia}
\email{gric2ryabov@gmail.com}
\thanks{The author was supported by the grant of The Natural Science Foundation of Hebei Province (project No.~A2023205045)}
\date{}
\newtheorem{prop}{Proposition}[section]
\newtheorem{lemm}[prop]{Lemma}
\newtheorem{theo}[prop]{Theorem}
\newtheorem{corl}[prop]{Corollary}
\theoremstyle{definition}
\newtheorem*{rem1}{Remark}
\def\tm#1{\item[{\rm (#1)}]}
\begin{document}

\vspace{\baselineskip}
\vspace{\baselineskip}

\vspace{\baselineskip}

\vspace{\baselineskip}

\begin{abstract}
An imprimitive symmetric indecomposable association scheme of rank~$5$ is said to be \emph{Higmanian}. A \emph{divisible design graph} is a graph whose adjacency matrix is an incidence matrix of a symmetric divisible design. We establish conditions which guarantee that a union of some basis relations of a Higmanian association scheme is an edge set of a divisible design graph. Further, we show that several known families of divisible design graphs can be obtained as fusions of Higmanian association schemes. Finally, using our approach we construct new infinite families of divisible design graphs.
\\
\\
\textbf{Keywords}: divisible design graphs, association schemes, difference sets.
\\
\\
\textbf{MSC}: 05B05, 05C60, 05E30. 
\end{abstract}

\maketitle

\section{Introduction}

Let $V$ be a finite set of size~$v$. A regular graph\footnote{Throughout the paper, by a graph we mean a simple undirected graph without loops.} $\Gamma$ with vertex set $V$ is defined to be a \emph{divisible design graph} (\emph{DDG} for short) if $V$ can be partitioned into the classes of the same size such that any two distinct vertices from the same class have exactly~$\lambda_1$ common neighbors and any two vertices from different classes have exactly $\lambda_2$ common neighbors for some nonnegative integers~$\lambda_1$ and~$\lambda_2$. The numbers $(v,k,\lambda_1,\lambda_2,m,n)$, where $k$ is the degree of each vertex of $\Gamma$, $m$ is the number of the above classes, and $n$ is the size of each class, are called the \emph{parameters} of~$\Gamma$. The partition of $V$ from the definition of DDG is called the \emph{canonical partition} of $\Gamma$. DDGs were introduced in~\cite{HKM} as a generalization of $(v,k,\lambda)$-graphs~\cite{Rud} which are exactly DDGs with $m=1$, or $n=1$, or $\lambda_1=\lambda_2$. A DDG is said to be \emph{proper} if $m,n>1$ and $\lambda_1\neq \lambda_2$.

DDGs provide a connection between graphs and designs. More precisely, if $\Gamma$ is a DDG, then an incidence structure whose points are vertices and blocks are neighborhoods of vertices of $\Gamma$ is a \emph{symmetric divisible design} (see~\cite{BJL} for the definition). An incidence matrix of this design coincides with an adjacency matrix of $\Gamma$. In fact, DDGs are precisely those graphs whose adjacency matrices are incidence matrices of symmetric divisible designs. DDGs are also special case of Deza graphs~\cite{EFHHH}. For more information on DDGs, we refer the readers to~\cite{CH0,HKM}.

One of the main problems concerned with DDGs is the problem of constructing new DDGs. Feasible parameters of DDGs with at most 1300 vertices are listed in~\cite{Gor}. Several constructions of DDGs can be found in~\cite{BG,BGHS,CH0,CH,CS,HKM,Kabanov,Kabanov2,KS,PS,Sh}. Some of them are based on a usage of strongly regular graphs, designs, Hadamard and weighing matrices. In the present paper, we establish how it is possible to construct DDGs from association schemes possessing special properties, namely, so-called Higmanian schemes. A \emph{Higmanian scheme} is defined to be an imprimitive symmetric (association) scheme of rank~$5$ which is indecomposable, i.e. not a wreath product of schemes. Studying of such schemes was initiated by Higman in~\cite{Hig}. He was motivated by a connection of these schemes with linked systems of strongly regular designs, finite geometries, and actions of finite simple groups. The term ``Higmanian (association) scheme'' was introduced in~\cite{KMZ}. Feasible intersection matrices and character-multiplicity tables of such schemes are listed in~\cite{Hig}. Several computational results on Higmanian schemes can be found in~\cite{KMZ}. A connection between some Higmanian schemes and linked systems of divisible designs was established in~\cite[Section~$4$]{KhS}.

Let $\mathcal{X}=(V,S)$ be a Higmanian scheme with point set $V$ and set of basis relations~$S$. The diagonal of $V^2$ is denoted by $\textbf{1}_V$. Clearly, $|S|=5$ and $\textbf{1}_V\in S$. It is easy to verify that $\mathcal{X}$ has one or two nontrivial parabolics, i.e. equivalence relations on~$V$ distinct from~$\textbf{1}_V$ and $V^2$ which are unions of some basis relations from~$S$. Moreover, if $\mathcal{X}$ has exactly two such parabolics, then they are of the forms $\textbf{1}_V\cup r$ and $\textbf{1}_V\cup r \cup s$, where $r,s\in S\setminus \{\textbf{1}_V\}$. Throughout the paper, we will consider only Higmanian schemes with two nontrivial parabolics. For short, the term ``Higmanian scheme'' means in this paper a Higmanian scheme with two nontrivial parabolics (see the exact definition in Section~$2$). In case of such scheme $\mathcal{X}$, one can order $S$ as $\{s_0,\ldots,s_4\}$ with
$$s_0=\textbf{1}_V,~s_1=e_0\setminus \textbf{1}_V,~s_2=e_1\setminus e_0,~\text{and}~n_3\leq n_4,$$
where $e_0$ and $e_1$ are nontrivial parabolics of $\mathcal{X}$ and $n_i$ is a valency of~$s_i$, $i\in\{0,\ldots,4\}$. The above ordering of~$S$ is said to be \emph{standard}. Given $i,j,k\in\{0,\ldots,4\}$, the intersection number of $\mathcal{X}$ corresponding to the basis relations $s_i$, $s_j$, and $s_k$ is denoted by $c_{ij}^k$. 

The main result of the paper is the theorem below which provides necessary and sufficient conditions for some graphs arising from a Higmanian scheme to be DDGs.

\begin{theo}\label{main}
Let $\mathcal{X}=(V,S)$ be a Higmanian scheme and $\{s_0,\ldots,s_4\}$ a standard ordering of~$S$. The graph $\Gamma=(V,E)$, where $E=s_2\cup s_i$ for some $i\in \{3,4\}$, is a divisible design graph if and only if $c_{33}^3=c_{33}^4$ and one of the following equalities holds:
\begin{equation}\label{cond1}
 \frac{1}{n_3}+\frac{1}{n_4}=\frac{1}{n_1}-\frac{1}{n_1+1};
\end{equation}

\begin{equation}\label{cond2}
\frac{n_2}{n_1+1}-\frac{2n_i}{n_{7-i}}=1.
\end{equation}
The canonical partition of $\Gamma$ is the partition into the classes of $e_1$ $($$e_0$, resp.$)$ whenever Eq.~\eqref{cond1} $($Eq.~\eqref{cond2}, resp.$)$ holds and $\Gamma$ is proper if and only if exactly one of Eqs.~\eqref{cond1} and~\eqref{cond2} holds.
\end{theo}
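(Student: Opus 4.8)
The plan is to work inside the Bose--Mesner algebra $\mathcal W=\langle A_0,\dots,A_4\rangle$ of $\mathcal X$, where $A_j$ is the adjacency matrix of $s_j$, and to translate ``$\Gamma$ is a DDG'' into a condition on the coefficients of $B^2$, where $B=A_2+A_i$ is the adjacency matrix of $\Gamma$. Since $\mathcal X$ is symmetric and hence commutative, $B^2=A_2^2+2A_2A_i+A_i^2=\sum_{j=0}^4 N_jA_j$ with $N_j=c_{22}^j+2c_{2i}^j+c_{ii}^j$ and $N_0=n_2+n_i=k$. A graph with adjacency matrix $A$ is a DDG with canonical partition the classes of a parabolic $e$ exactly when $A^2=(k-\lambda_1)A_0+(\lambda_1-\lambda_2)C_e+\lambda_2 J$, where $C_e=\sum_{s_j\subseteq e}A_j$. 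Since the complements of $e_0$ and $e_1$ in $V^2$ are $s_2\cup s_3\cup s_4$ and $s_3\cup s_4$, this yields the dictionary: $\Gamma$ is a DDG with canonical partition the classes of $e_1$ iff $N_1=N_2$ and $N_3=N_4$; with canonical partition the classes of $e_0$ iff $N_2=N_3=N_4$; and $\Gamma$ is a $(v,k,\lambda)$-graph iff $N_1=N_2=N_3=N_4$. That no other canonical partition occurs follows because, if $\Gamma$ is proper, then $\lambda_1\neq\lambda_2$, so $C=(\lambda_1-\lambda_2)^{-1}(B^2-(k-\lambda_1)A_0-\lambda_2 J)\in\mathcal W$; being a symmetric $\{0,1\}$-matrix it is a union of some $s_j$, and being an equivalence relation it is a parabolic, hence one of $\mathbf 1_V,e_0,e_1,V^2$ --- and $\mathbf 1_V$, $V^2$ would give $n=1$ or $m=1$, contradicting properness. (For $e_0,e_1$ one automatically has $m,n>1$, since $n_1\geq 1$ and these parabolics are nontrivial.)

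It remains to compute the $N_j$. The combinatorial meaning of the parabolics settles the ``small'' intersection numbers: the classes of $e_1$ have size $n_1+1+n_2$, the relations meeting such a class are $s_0,s_1,s_2$, and $s_3,s_4$ join distinct classes; so $c_{22}^1=n_2$, $c_{22}^2=n_2-n_1-1$, $c_{22}^3=c_{22}^4=0$, $c_{2i}^1=c_{2i}^2=0$, and therefore $N_1=n_2+c_{ii}^1$, $N_2=n_2-(n_1+1)+c_{ii}^2$, $N_3=2c_{2i}^3+c_{ii}^3$, $N_4=2c_{2i}^4+c_{ii}^4$. For the remaining numbers I would use the feasible intersection matrices of Higmanian schemes (recalled in Section~2, classified in~\cite{Hig}) together with the identities $n_kc_{ab}^k=n_bc_{ak}^b$ and $\sum_b c_{ab}^k=n_a$, and the matrix relations $A_2(A_3+A_4)=n_2(A_3+A_4)$ and $A_i(A_3+A_4)=n_iJ-A_iP$, where $P=A_0+A_1+A_2$; these determine $c_{ii}^1+c_{ii}^2$, $c_{ii}^3+c_{ii}^4$ and the relevant sums involving $c_{2i}^3,c_{2i}^4$. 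Substituting the resulting expressions into $N_1-N_2$, $N_3-N_4$ and $N_2-N_3$ and simplifying, one should find that the pair of conditions ``$N_1=N_2$ and $N_3=N_4$'' is equivalent to ``$c_{33}^3=c_{33}^4$ and \eqref{cond1}'', while ``$N_2=N_3=N_4$'' is equivalent to ``$c_{33}^3=c_{33}^4$ and \eqref{cond2}''.

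Feeding this into the dictionary completes the proof. $\Gamma$ is a DDG iff one of these two packages of conditions holds, i.e.\ iff $c_{33}^3=c_{33}^4$ and at least one of \eqref{cond1},\eqref{cond2} holds, with canonical partition the classes of $e_1$ under \eqref{cond1} and of $e_0$ under \eqref{cond2}. Moreover $\Gamma$ is a $(v,k,\lambda)$-graph iff $N_1,\dots,N_4$ all coincide iff (given $c_{33}^3=c_{33}^4$) both \eqref{cond1} and \eqref{cond2} hold; hence, when $\Gamma$ is a DDG, it is proper iff exactly one of the two equations holds, and in that case it has a unique canonical partition --- namely the one among $e_0$-classes, $e_1$-classes that satisfies its DDG conditions, the other one failing them.

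The crux, and the step I expect to be the main obstacle, is the middle one: verifying that, modulo the scheme axioms and the parabolic constraints, every intersection number feeding into $N_1,\dots,N_4$ is expressible through the valencies $n_1,n_2,n_3,n_4$ and the single extra quantity $c_{33}^3$ (with $c_{33}^4$), so that the DDG conditions genuinely collapse to the clean equalities above rather than dragging in further parameters of $\mathcal X$. This is a finite but intricate computation with the feasible parameter families of~\cite{Hig}, and some care is needed because those families subdivide according to the structure of the quotient scheme $\mathcal X/e_1$ (e.g.\ whether its rank is $2$ or $3$); the subdivision should, however, disappear in the final conditions. The two values $i=3$ and $i=4$ can be handled in parallel, the relevant constraints being symmetric under $s_3\leftrightarrow s_4$ except for \eqref{cond2}, which carries the dependence on $i$.
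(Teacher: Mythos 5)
Your proposal follows essentially the same route as the paper: the paper likewise reduces ``$\Gamma$ is a DDG'' to the equalities $c_1=c_2,\ c_3=c_4$ (canonical partition by $e_1$) or $c_2=c_3=c_4$ (canonical partition by $e_0$), where $c_j=c_{22}^{j}+2c_{2i}^{j}+c_{ii}^{j}$ coincides with your $N_j$, and then establishes the equivalence with $c_{33}^3=c_{33}^4$ together with Eq.~\eqref{cond1} or Eq.~\eqref{cond2} by substituting Higman's parametrized intersection matrices (Lemmas~\ref{intermatr}, \ref{addcond}, and~\ref{equivconds}). The only notable differences are that you make explicit why the canonical partition of a proper DDG must be one of the two nontrivial parabolics (which the paper leaves implicit), while the central computation you defer with ``one should find that'' is precisely the content of Lemma~\ref{equivconds}.
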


The parameters of the DDGs from Theorem~\ref{main} expressed via $n_i$'s are given in Lemma~\ref{parameters}. Observe that Eq.~\eqref{cond1} does not depend on~$i$ and it is invariant under interchanging $n_3$ and $n_4$. So this condition leads to two graphs $\Gamma_3$ and $\Gamma_4$ with edge sets $s_2\cup s_3$ and $s_2\cup s_4$, respectively. These graphs are \emph{partial complements} to each other in the sense of~\cite[Section~4.5]{HKM}, i.e. the adjacency matrix of $\Gamma_4$ can be obtained from the adjacency matrix of $\Gamma_3$ by taking the complement of the off-diagonal blocks corresponding to the edges between distinct classes of~$e_1$.

We show that several known families of DDGs can be constructed as fusions of Higmanian schemes satisfying the conditions from Theorem~\ref{main} (see Sections~$4$ and~$5$). We also construct new infinite families of DDGs using Theorem~\ref{main} (Section~$6$).

Denote the class of DDGs arising from Higmanian schemes satisfying $c_{33}^3=c_{33}^4$ and Eq.~\eqref{cond1} (Eq.~\eqref{cond2}, resp.) by $\mathcal{K}_1$ ($\mathcal{K}_2$, resp.). Theorem~\ref{main} implies that $\mathcal{K}_1\triangle \mathcal{K}_2$ consists of proper DDGs, whereas $\mathcal{K}_1\cap \mathcal{K}_2$ consists of $(v,k,\lambda)$-graphs. Corollaries~\ref{ddg1} and~\ref{ddsddg2} yield the observation below.

\begin{rem1}
The classes $\mathcal{K}_1\setminus \mathcal{K}_2$, $\mathcal{K}_2\setminus \mathcal{K}_1$, and $\mathcal{K}_1\cap \mathcal{K}_2$ contain infinitely many graphs.
\end{rem1}

It should be mentioned that the idea to construct designs as fusions of certain association schemes was realized in several papers. For example, partial geometric designs which are fusions of association schemes of rank~$4$ were investigated in the series of papers~\cite{Han,NOS,Xu1,Xu2}. The similar approach for studying some Deza graphs was used in~\cite{BPR}.

We finish the introduction with a brief outline of the paper. A necessary background of schemes, matrix algebras, $S$-rings (Schur rings), and divisible difference sets is given in Section~$2$. Theorem~\ref{main} is proved in Section~$3$. In Sections~$4$ and~$5$, we show that some known families of DDGs are covered by Theorem~\ref{main} and belong to the classes $\mathcal{K}_1$ and $\mathcal{K}_2$, respectively. In Section~$6$, we construct new infinite families of DDGs using Theorem~\ref{main}. The Higmanian schemes used in this case are Cayley schemes constructed from divisible difference sets. Finally, a discussion on possible DDGs arising from Higmanian schemes and not covered by Theorem~\ref{main} is given in Section~$7$.

The author would like to thank Prof. S. Goryainov, Prof. V. Kabanov, and Prof. M. Muzychuk for the discussions on the subject matters. 

\section{Preliminaries}

In this section, we provide a necessary background of schemes and their adjacency algebras, $S$-rings, and divisible difference sets. In the first and second subsections, we follow in general to~\cite{CP}, whereas in the third one to~\cite{Ry}. The material of the fourth subsection is taken from~\cite{DJ} and~\cite{Pott}.

\subsection{Schemes}
Let $V$ be a finite set. Given a binary relation $s\subseteq V^2$ and $x\in V$, put $xs=\{y\in V:~(x,y)\in s\}$ and $s^*=\{(z,y):~(y,z)\in s\}$. Given a collection $S$ of binary relations on $V^2$, put $S^*=\{s^*:~s\in S\}$. Given $r,s\subseteq V^2$, a composition of $r$ and $s$ is denoted by $rs$. Given an equivalence relation $e$ on $V$, the set of all classes of $e$ is denoted by $V/e$.

Let $S$ be a partition of $V^2$. The pair $\mathcal{X}=(V,S)$ is called an (\emph{association}) \emph{scheme} on $V$ if $\textbf{1}_{V}\in S$, $S^{*}=S$, and given $r,s,t\in S$ the number
$$c_{rs}^t=|xr\cap ys^{*}|$$
does not depend on the choice of $(x,y)\in t$. The elements of $S$, the numbers $c_{rs}^t$, and the number $\rk(\mathcal{X})=|S|$ are called the \emph{basis relations}, the \emph{intersection numbers}, and the \emph{rank} of $\mathcal{X}$, respectively. The scheme $\mathcal{X}$ is said to be \emph{symmetric} if $s^*=s$ for every $s\in S$. Every symmetric scheme is \emph{commutative}, i.e. $c_{rs}^t=c_{sr}^t$ for all $r,s,t\in S$. Given $s\in S$, the \emph{intersection matrix} $C_s$ is defined to be an $(|S|\times |S|)$-matrix whose elements are indexed by the elements of $S^2$ such that
$$(C_s)_{rt}=c_{rs}^t,~r,t\in S.$$ 

A binary relation on $V^2$ is called a \emph{relation} of $\mathcal{X}$ if it is a union of some basis relations of~$\mathcal{X}$. The set of all relations of $\mathcal{X}$ is denoted by $S^\cup$. Given $r\in S^\cup$, the number 
$$n_r=\sum \limits_{s\in S,~s\subseteq r} c_{ss^*}^{\textbf{1}_V}$$
is called the \emph{valency} of $r$. It is easy to see that $n_r=|xr|$ for every $x\in V$.

An equivalence relation $e$ on $V$ is defined to be a \emph{parabolic} of $\mathcal{X}$ if $e$ is a relation of $\mathcal{X}$ or, equivalently, $e\in S^\cup$. Every scheme on $V$ has two parabolics, namely, $\textbf{1}_V$ and $V^2$. A parabolic $e$ of $\mathcal{X}$ is said to be \emph{nontrivial} if $e\neq \textbf{1}_V$ and $e\neq V^2$. A scheme is said to be \emph{primitive} if it does not have a nontrivial parabolic and \emph{imprimitive} otherwise. If $e$ is a parabolic of $\mathcal{X}$, then each class of $e$ is of size~$n_e$ and $|V/e|=|V|/n_e$. If $e_0$ and $e_1$ are parabolics of $\mathcal{X}$ such that $e_0\subseteq e_1$, then $n_{e_0}$ divides $n_{e_1}$.

For every relation $s\in S^\cup$ of $\mathcal{X}$, the largest relation $\rad(s)\subseteq V^2$ such that $\rad(s)s=s\rad(s)=s$ is a parabolic of $\mathcal{X}$. One can check that $e\subseteq \rad(s)$ for some $s\in S^\cup$ and a parabolic $e$ of $\mathcal{X}$ if and only if 
\begin{equation}\label{radical}
(U\times W) \subseteq s~\text{for all}~U,W\in V/e~\text{such that}~(U\times W)\cap s \neq \varnothing.
\end{equation}


Throughout this paper, the scheme $\mathcal{X}=(V,S)$ is said be \emph{Higmanian} if
\begin{enumerate}

\tm{1} $\rk(\mathcal{X})=5$,

\tm{2} $\mathcal{X}$ is symmetric,

\tm{3} there are $s_1,s_2\in S$ such that $e_0=s_0\cup s_1$ and $e_1=s_0\cup s_1 \cup s_2$, where $s_0=\textbf{1}_V$, are parabolics of $\mathcal{X}$,

\tm{4} $\rad(s)=\textbf{1}_V$ for every $s\in S$ outside $e_1$.

\end{enumerate}

A Higmanian scheme is imprimitive due to Condition~$(3)$. One can verify that Conditions~$(1)$ and~$(4)$ from the above definition imply that the parabolics $e_0$ and $e_1$ from Condition~$(3)$ are the only nontrivial parabolics of $\mathcal{X}$. Condition~$(4)$ is equivalent (modulo the other ones) to that $\mathcal{X}$ is indecomposable, i.e. not a wreath product of schemes (see~\cite[Section~3.4]{CP} for the definition).

Let $e$ be a parabolic of $\mathcal{X}$. Given $s\in S$, set $s_{V/e}=\{(U,W)\in (V/e)^2: s_{U,W}\neq \varnothing\}$, where $s_{U,W}=s\cap (U\times W)$. Given $U\in V/e$, set $s_{U}=s_{U,U}$. The sets $\{s_{V/e}:s\in S\}$ and $\{s_{U}:s\in S, s_{U}\neq \varnothing\}$ are denoted by $S_{V/e}$ and $S_{U}$, respectively. The pairs
$$\mathcal{X}_{V/e}=(V/e,S_{V/e})~\text{and}~\mathcal{X}_{U}=(U,S_{U}),$$ 
where $U\in V/e$, are schemes called the \emph{quotient} of $\mathcal{X}$ modulo~$e$ and the \emph{restriction} of $\mathcal{X}$ on $U$, respectively.

If we replace Conditions~$(3)$ and~$(4)$ in the definition of a Higmanian scheme by the condition that $\mathcal{X}$ has nontrivial parabolics $e_0$ and $e_1$ such that
\begin{equation}\label{equiv1}
\rk(\mathcal{X}_{U_0})=2~\text{and}~\rk(\mathcal{X}_{U_1})=3
\end{equation}
for every $U_0\in V/e_0$ and every $U_1\in V/e_1$
and
\begin{equation}\label{equiv2}
\rk(\mathcal{X}_{V/e_1})=2,~\rk(\mathcal{X}_{V/e_0})=3,
\end{equation}
then we obtain an equivalent definition. Indeed, Condition~$(3)$ is equivalent to Eq.~\eqref{equiv1}. Due to Eq.~\eqref{radical}, Condition~$(4)$ is equivalent to that each of two basis relations of $\mathcal{X}$ outside $e_1$ has a nonempty intersection with $U\times W$ and $U^\prime\times W^\prime$ for all distinct $U,W\in V/e_0$ lying in distinct classes of~$e_1$ and all distinct $U^\prime,W^\prime\in V/e_1$. The latter is equivalent to Eq.~\eqref{equiv2}. Thus, our definition of a Higmanian scheme is equivalent to the definition of a scheme from the intersection of classes I and~II from~\cite{Hig}.

\subsection{Adjacency algebras}

If $M$ is a $\{0,1\}$-matrix of size $(|V|\times |V|)$ whose elements are indexed by the elements of~$V$, then put $s(M)=\{(x,y)\in V^2:~M_{xy}=1\}$. Let $\mathcal{M}$ be a collection of $\{0,1\}$-matrices of size~$(|V|\times |V|)$ such that $\mathcal{M}$ is closed under taking a transpose matrix, $I\in \mathcal{M}$, and
$$\sum \limits_{M\in \mathcal{M}} M=J,$$
where $I=I_{|V|}$ and $J=J_{|V|}$ are the identity and all-ones matrices, respectively, of size $(|V|\times |V|)$. Suppose that the linear space 
$$\mathcal{W}=\Span_{\mathbb{C}}\{M:~M\in \mathcal{M}\}$$ 
is closed under matrix multiplication, i.e $\mathcal{W}$ is an algebra with respect to the matrix multiplication. Then due to~\cite[Theorem~2.3.7]{CP}, the pair 
$$\mathcal{X}=\mathcal{X}(\mathcal{M})=(V,S),$$
where 
$$S=\{s(M):~M\in \mathcal{M}\},$$
is a scheme on $V$ and $\mathcal{W}$ is the \emph{adjacency algebra} of $\mathcal{X}$ (see~\cite[Section~2.3.1]{CP} for the definition). Clearly, $\rk(\mathcal{X})=|\mathcal{M}|$. It is easy to see that the intersection numbers of $\mathcal{X}$ are the structure constants of $\mathcal{W}$ with respect to the basis~$\mathcal{M}$. If $M\in \mathcal{M}$, then $s(M^T)=s(M)^*$, where $M^T$ is the transpose of $M$, and the number of nonzero entries in each row and each column of~$M$ is the same and equal to the valency $n_{s(M)}$ of $s(M)\in S$.

The set of all matrices which are sums of some matrices from $\mathcal{M}$ is denoted by $\mathcal{M}^\cup$. If $M\in \mathcal{M}^\cup$, then $s(M)$ is a relation of $\mathcal{X}$. Moreover, $s(M)$ is a parabolic of $\mathcal{X}$ if and only if $M$ is a block-diagonal matrix with all-identity diagonal blocks of the same size. Given $M\in \mathcal{M}^\cup$, let $\rad(M)$ be the $\{0,1\}$-matrix with the maximal number of nonzero entries such that $\rad(M)M=M\rad(M)=lM$ for some positive integer $l$. Then $\rad(M)$ is a block-diagonal matrix from $\mathcal{M}^\cup$ with all-identity blocks of the same size~$l$ and $\rad(s(M))=s(\rad(M))$. The above discussion implies the following lemma.  

\begin{lemm}\label{algebra}
In the above notations, $\mathcal{X}=\mathcal{X}(\mathcal{M})$ is a Higmanian scheme if and only if

\begin{enumerate}

\tm{1} $|\mathcal{M}|=5$,

\tm{2} $M^T=M$ for every $M\in \mathcal{M}$,

\tm{3} there are $M_1,M_2\in \mathcal{M}$ such that $N_0=M_0+M_1$ and $N_1=M_0+M_1+M_2$, where $M_0=I$, are block-diagonal matrices with all-identity blocks of the same size,

\tm{4} $\rad(M)=I$ for every $M\in \mathcal{M}\setminus \{M_0,M_1,M_2\}$.

\end{enumerate}

\end{lemm}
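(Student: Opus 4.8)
The proof is a dictionary translation: I would establish the equivalence by matching, one at a time, Conditions~(1)--(4) in the definition of a Higmanian scheme with Conditions~(1)--(4) of the lemma, exploiting the bijection $M\mapsto s(M)$ between $\{0,1\}$-matrices of size $(|V|\times|V|)$ and binary relations on $V$, together with the facts recalled just before the statement of the lemma: that $\rk(\mathcal{X})=|\mathcal{M}|$; that $s(M^T)=s(M)^*$; that $s(M)$ is a parabolic of $\mathcal{X}$ precisely when $M$ is block-diagonal with all-identity diagonal blocks of a common size; and that $\rad(s(M))=s(\rad(M))$, with $\rad(M)$ itself block-diagonal with equal-size all-identity blocks.

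Conditions~(1) and~(2) are the quickest. Condition~(1) holds because $\rk(\mathcal{X})=|\mathcal{M}|$, so $\rk(\mathcal{X})=5$ iff $|\mathcal{M}|=5$. For Condition~(2), $M\mapsto s(M)$ being a bijection onto $S$ and $s(M)^*=s(M^T)$, the relation $s(M)$ satisfies $s(M)^*=s(M)$ exactly when $M^T=M$; hence $\mathcal{X}$ is symmetric iff every member of $\mathcal{M}$ is symmetric. For Condition~(3) I would put $M_0=I$, note $s(M_0)=\textbf{1}_V=s_0$, and use the identities $s(M_0+M_1)=s_0\cup s(M_1)$ and $s(M_0+M_1+M_2)=s_0\cup s(M_1)\cup s(M_2)$ for $M_1,M_2\in\mathcal{M}$. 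By the parabolic criterion, these two relations are parabolics of $\mathcal{X}$ if and only if $N_0=M_0+M_1$ and $N_1=M_0+M_1+M_2$ are block-diagonal with all-identity diagonal blocks of the same size; writing $s_1=s(M_1)$ and $s_2=s(M_2)$ then matches the existence statement in Condition~(3) of the scheme definition with that in Condition~(3) of the lemma. Finally, for Condition~(4), once (3) is in force we have $e_1=s_0\cup s_1\cup s_2=s(M_0)\cup s(M_1)\cup s(M_2)$, so the basis relations of $\mathcal{X}$ lying outside $e_1$ are exactly the $s(M)$ with $M\in\mathcal{M}\setminus\{M_0,M_1,M_2\}$; since $\rad(s(M))=s(\rad(M))$ and $s(\cdot)$ is injective on $\{0,1\}$-matrices, $\rad(s(M))=\textbf{1}_V=s(I)$ iff $\rad(M)=I$. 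Conjoining the four equivalences yields the lemma.

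I do not expect a genuine obstacle, since the substantive input --- the characterizations of the parabolics and radicals of $\mathcal{X}$ in terms of the defining matrices of $\mathcal{M}$ --- has already been assembled in the discussion preceding the lemma. The only point demanding a little care is the bookkeeping in Conditions~(3) and~(4): one must check that the particular labelling of $s_1$ and $s_2$ forced by the chain $\textbf{1}_V\subsetneq e_0=s_0\cup s_1\subsetneq e_1=s_0\cup s_1\cup s_2$ is faithfully mirrored by the matrices $M_1$ and $M_2$, and that the phrase \emph{all-identity diagonal blocks of the same size} is exactly the matrix avatar of the statement that $e$ is an equivalence relation all of whose classes are equinumerous.
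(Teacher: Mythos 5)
Your proposal is correct and follows essentially the same route as the paper: the paper gives no separate argument for this lemma, simply stating that it is implied by the preceding discussion of adjacency algebras (namely $\rk(\mathcal{X})=|\mathcal{M}|$, $s(M^T)=s(M)^*$, the block-diagonal characterization of parabolics, and $\rad(s(M))=s(\rad(M))$), which is exactly the dictionary you spell out condition by condition.
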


\subsection{$S$-rings}

Let $G$ be a finite group and $\mathbb{Z}G$ the integer group ring. The identity element and the set of all nonidentity elements of $G$ are denoted by~$1_G$ and~$G^\#$, respectively. A product of two elements $x=\sum_{g\in G} x_g g,y=\sum_{g\in G} y_g g\in\mathbb{Z}G$ will be written as $x\cdot y$. If $X\subseteq G$, then the element $\sum \limits_{x\in X} {x}$ of the group ring $\mathbb{Z}G$ is denoted by~$\underline{X}$. Given $X\subseteq G$, put $\rad(X)=\{g\in G:\ gX=Xg=X\}$. It is easy to verify that
\begin{equation}\label{easy}
\underline{X}\cdot \underline{Y}=\underline{Y}\cdot \underline{X}=|Y|\underline{X}
\end{equation}
for all $X,Y\subseteq G$ such that $Y\subseteq \rad(X)$. The set $\{x^{-1}:x\in X\}$ and the binary relation $\{(g,xg):~x\in X,~g\in G\}\subseteq G^2$ are denoted by $X^{-1}$ and $s(X)$, respectively. Clearly, $s(1_G)=\textbf{1}_G$. If $X$ is a nonempty identity-free inverse-closed subset of $G$, then the \emph{Cayley graph} $\cay(G,X)$ over $G$ with connection set $X$ is defined to be a graph with vertex set $G$ and edge set $s(X)$.

A subring  $\mathcal{A}\subseteq \mathbb{Z}G$ is called an \emph{$S$-ring} (a \emph{Schur ring}) (see~\cite{Schur,Wi}) over $G$ if there exists a partition $\mathcal{S}=\mathcal{S}(\mathcal{A})$ of~$G$ such that:

$(1)$ $\{1_G\}\in\mathcal{S}$;

$(2)$  if $X\in\mathcal{S}$, then $X^{-1}\in\mathcal{S}$;

$(3)$ $\mathcal{A}=\Span_{\mathbb{Z}}\{\underline{X}:\ X\in\mathcal{S}\}$.

\noindent The elements of $\mathcal{S}$ and the number $\rk(\mathcal{A})=|\mathcal{S}|$ are called the \emph{basic sets} and the \emph{rank} of~$\mathcal{A}$, respectively. The $S$-ring $\mathcal{A}$ is said to be \emph{symmetric} if $X^{-1}=X$ for every $X\in \mathcal{S}$. Let $X,Y\in\mathcal{S}$. If $Z\in \mathcal{S}$, then the number of distinct representations of $z\in Z$ in the form $z=xy$ with $x\in X$ and $y\in Y$ does not depend on the choice of $z\in Z$. Denote this number by $c^Z_{XY}$. One can see that $\underline{X}\cdot\underline{Y}=\sum \limits_{Z\in \mathcal{S}}c^Z_{XY}\underline{Z}$. Therefore the numbers $c^Z_{XY}$ are the structure constants of $\mathcal{A}$ with respect to the basis $\{\underline{X}:\ X\in\mathcal{S}\}$.

A set $X\subseteq G$ is called an \emph{$\mathcal{A}$-set} if $\underline{X}\in \mathcal{A}$ or, equivalently, $X$ is a union of some basic sets of~$\mathcal{A}$. The set of all $\mathcal{A}$-sets is denoted by $\mathcal{S}^\cup$. A subgroup $A\leq G$ is called an \emph{$\mathcal{A}$-subgroup} if $A$ is an $\mathcal{A}$-set. For every $\mathcal{A}$-set $X$, the set $\rad(X)$ is an $\mathcal{A}$-subgroup. An $S$-ring is said to be \emph{primitive} if it does not have a proper nontrivial $\mathcal{A}$-subgroup and \emph{imprimitive} otherwise.




The $S$-ring $\mathcal{A}$ is said be \emph{Higmanian} if
\begin{enumerate}

\tm{1} $\rk(\mathcal{A})=5$,

\tm{2} $\mathcal{A}$ is symmetric,

\tm{3} there are $X_1,X_2\in \mathcal{S}$ such that $A_0=X_0\cup X_1$ and $A_1=X_0\cup X_1\cup X_2$, where $X_0=\{1_G\}$, are $\mathcal{A}$-subgroups,

\tm{4} $\rad(X)=\{1_G\}$ for every $X\in \mathcal{S}$ outside $A_1$.

\end{enumerate}

A Higmanian $S$-ring is imprimitive due to Condition~$(3)$. Note that Conditions~$(1)$ and~$(4)$ from the above definition imply that the $\mathcal{A}$-subgroups $A_0$ and $A_1$ from Condition~$(3)$ are the only proper nontrivial $\mathcal{A}$-subgroups of $G$.


If $\mathcal{A}$ is an $S$-ring over $G$, then 
$$\mathcal{X}=\mathcal{X}(\mathcal{A})=(G,S),$$
where $S=\{s(X):~X\in \mathcal{S}\}$, is a \emph{Cayley scheme} over $G$, i.e. a scheme whose automorphism group has a regular subgroup isomorphic to~$G$. Due to~\cite[Theorem~2.4.16]{CP}, the mapping 
$$\mathcal{A}\mapsto \mathcal{X}(\mathcal{A})$$ 
is a partial order isomorphism between the $S$-rings and the Cayley schemes over $G$. Clearly, $\rk(\mathcal{X})=\rk(\mathcal{A})$. Given $X\subseteq G$, we have $X\in \mathcal{S}^\cup$ if and only if $s(X)\in S^\cup$. It can be verified that $n_{s(X)}=|X|$ and $c_{s(X)s(Y)}^{s(Z)}=c_{XY}^Z$ for all $X,Y,Z\in \mathcal{S}$. One can see that $H$ is an $\mathcal{A}$-subgroup if and only if $e=s(H)$ is a parabolic of $\mathcal{X}$. Finally, $s(X^{-1})=s(X)^*$ and $\rad(s(X))=s(\rad(X))$ for every $X\in \mathcal{S}$. Thus, we obtain the following lemma.

\begin{lemm}\label{ringscheme}
An $S$-ring $\mathcal{A}$ is Higmanian if and only if the scheme $\mathcal{X}(\mathcal{A})$ so is. 
\end{lemm}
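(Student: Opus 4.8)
The plan is to show that the correspondence $\mathcal{A}\mapsto\mathcal{X}(\mathcal{A})$ preserves each of the four defining conditions of ``Higmanian'' in both directions, using the dictionary between $S$-rings and Cayley schemes recalled just before the statement. Since that dictionary is already a partial order isomorphism (\cite[Theorem~2.4.16]{CP}), ranks match ($\rk(\mathcal{X}(\mathcal{A}))=\rk(\mathcal{A})$), so Condition~(1) transfers verbatim. For Condition~(2), symmetry of $\mathcal{A}$ means $X^{-1}=X$ for all $X\in\mathcal{S}$; since $s(X^{-1})=s(X)^*$, this is exactly the statement that $s^*=s$ for all $s\in S=\{s(X):X\in\mathcal{S}\}$, i.e. $\mathcal{X}(\mathcal{A})$ is symmetric, and the implication runs both ways because $X\mapsto s(X)$ is a bijection $\mathcal{S}\to S$.

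The two remaining conditions are where the parabolic/subgroup dictionary does the work. For Condition~(3), I would use the stated fact that $H\leq G$ is an $\mathcal{A}$-subgroup if and only if $e=s(H)$ is a parabolic of $\mathcal{X}(\mathcal{A})$; moreover $s$ respects unions ($s(X\cup Y)=s(X)\cup s(Y)$ when the pieces are basic sets) and sends $\{1_G\}$ to $\mathbf{1}_G$. Hence the existence of $X_1,X_2\in\mathcal{S}$ with $X_0\cup X_1$ and $X_0\cup X_1\cup X_2$ being $\mathcal{A}$-subgroups is equivalent, under the bijection, to the existence of $s_1,s_2\in S$ with $s_0\cup s_1$ and $s_0\cup s_1\cup s_2$ being parabolics of $\mathcal{X}(\mathcal{A})$. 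For Condition~(4), the key input is the identity $\rad(s(X))=s(\rad(X))$ together with the observation that $X$ lies outside the $\mathcal{A}$-subgroup $A_1$ exactly when $s(X)$ lies outside the parabolic $e_1=s(A_1)$; thus $\rad(X)=\{1_G\}$ for every basic set $X$ outside $A_1$ holds if and only if $\rad(s)=\mathbf{1}_G$ for every basis relation $s$ outside $e_1$. Assembling these four equivalences gives that $\mathcal{A}$ satisfies (1)--(4) precisely when $\mathcal{X}(\mathcal{A})$ does.

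I do not expect a genuine obstacle here: the lemma is essentially a bookkeeping statement assembling four already-recorded compatibilities of the functor $\mathcal{A}\mapsto\mathcal{X}(\mathcal{A})$. The one point deserving a line of care is that the parabolics $e_0,e_1$ distinguished in the scheme are the same objects as $s(A_0),s(A_1)$, so that ``outside $e_1$'' and ``outside $A_1$'' refer to matching sets of basis relations / basic sets under the bijection $X\leftrightarrow s(X)$; this follows because that bijection carries unions to unions, hence carries the $\mathcal{A}$-set $A_1$ to the relation $e_1$ and its complement (as a union of basic sets) to the complementary union of basis relations. With that remark in place the proof is just: combine $\rk(\mathcal{X})=\rk(\mathcal{A})$, $s(X^{-1})=s(X)^*$, the subgroup--parabolic correspondence, and $\rad(s(X))=s(\rad(X))$, one condition at a time.
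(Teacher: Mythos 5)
Your proposal is correct and follows essentially the same route as the paper: the lemma is obtained there by assembling exactly the facts you cite, namely $\rk(\mathcal{X}(\mathcal{A}))=\rk(\mathcal{A})$, $s(X^{-1})=s(X)^*$, the correspondence between $\mathcal{A}$-subgroups and parabolics, and $\rad(s(X))=s(\rad(X))$, recorded in the paragraph immediately preceding the statement. No further comment is needed.
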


\subsection{Divisible difference sets}

As in the previous subsection, $G$ is a finite group. Let $N$ be a subgroup of $G$. A non-empty subset $D$ of $G$ is called a \emph{divisible difference set} (\emph{DDS} for short) relative to~$N$ if 
$$\underline{D}\cdot\underline{D}^{-1}=ke+\lambda_1 \underline{N}^\#+\lambda_2 (\underline{G}-\underline{N}),$$
where $k=|D|$ and $\lambda_1$ and $\lambda_2$ are nonnegative integers. The numbers $(m,n,k,\lambda_1,\lambda_2)$, where $m=|G:N|$ and $n=|N|$, are called the \emph{parameters} of $D$. The DDS $D$ is said to be \emph{reversible} if $D^{-1}=D$. If $\lambda_1=0$, then $D$ is a \emph{relative difference set} (\emph{RDS} for short) with \emph{forbidden subgroup} $N$ and parameters $(m,n,k,\lambda_2)$. It is easy to check that a complement to the DDS $D$ is also a DDS with parameters~$(m^\prime,n^\prime,k^\prime,\lambda_1^\prime,\lambda_2^\prime)$, where $m^\prime=m$, $n^\prime=n$, 
$$k^\prime=mn-k,~\lambda_1^\prime=mn-2k+\lambda_1,~\text{and}~\lambda_2^\prime=mn-2k+\lambda_2.$$ 
Clearly, a complement to an RDS is not an RDS in general.

We say that the DDS~$D$ satisfies the \emph{intersection condition} if the number $|D\cap Ng|$ does not depend on $g\in G$ or, equivalently, $D$ contains the same number of elements from each $N$-coset. It is easy to see that if $D$ satisfies the intersection condition, then $G\setminus D$ also satisfies. Suppose that $D$ is an RDS. Then $D$ satisfies the intersection condition if and only if $|D\cap Ng|=1$ for every $g\in G$, i.e. $D$ is a transversal for $N$ in $G$. In this case, $D$ is called \emph{semiregular} and $k=m=\lambda n$. The equality $k=m$ for the parameters of an RDS is equivalent to a semiregularity of this RDS.  

The lemma below which establishes the connection between Cayley DDGs and DDSs follows immediately from the definitions.

\begin{lemm}\label{connectddgdds}
The following statements hold.
\begin{enumerate}

\tm{1} If $D$ is an identity-free reversible DDS in a group $G$ of order~$v$ with parameters $(m,n,k,\lambda_1,\lambda_2)$ relative to a subgroup $N\leq G$, then $\cay(G,D)$ is a DDG with parameters $(v,k,\lambda_1,\lambda_2,m,n)$ and the canonical partition into the right $N$-cosets.

\tm{2} If $\cay(G,D)$ is a Cayley DDG over a group $G$ with parameters $(v,k,\lambda_1,\lambda_2,m,n)$, then the corresponding canonical partition of $G$ is the partition into the right $N$-cosets for some subgroup $N\leq G$ and $D$ is an identity-free reversible DDS with parameters $(m,n,k,\lambda_1,\lambda_2)$ relative to $N$.
\end{enumerate}

\end{lemm}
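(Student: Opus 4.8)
\textbf{Proof plan for Lemma~\ref{connectddgdds}.}
The plan is to unwind both directions directly from the definition of a DDG and the definition of a DDS, using the dictionary between group-ring identities and common-neighbour counts in Cayley graphs. For statement~(1), I would set $v=|G|=mn$ and $\Gamma=\cay(G,D)$; since $D$ is identity-free and reversible, $\Gamma$ is a well-defined simple regular graph of degree $k=|D|$. The key observation is that for $g,h\in G$ the number of common neighbours of $g$ and $h$ in $\Gamma$ equals the number of ways to write $g^{-1}h$ (equivalently $hg^{-1}$, after adjusting for left/right conventions) as a product $d_1 d_2^{-1}$ with $d_1,d_2\in D$; this count is precisely the coefficient of $g^{-1}h$ in $\underline{D}\cdot\underline{D}^{-1}$. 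The DDS identity $\underline{D}\cdot\underline{D}^{-1}=ke+\lambda_1\underline{N}^\#+\lambda_2(\underline{G}-\underline{N})$ then reads off exactly as: two distinct vertices in the same right $N$-coset (i.e.\ $g^{-1}h\in N^\#$) have $\lambda_1$ common neighbours, and two vertices in different cosets ($g^{-1}h\notin N$) have $\lambda_2$. Since the right $N$-cosets partition $G$ into $m$ classes of size $n$, this is verbatim the definition of a DDG with parameters $(v,k,\lambda_1,\lambda_2,m,n)$ and canonical partition the right $N$-cosets.

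For statement~(2), I would run the same computation in reverse. Let $\Gamma=\cay(G,D)$ be a Cayley DDG with parameters $(v,k,\lambda_1,\lambda_2,m,n)$ and canonical partition $\mathcal{P}$. Because $\Gamma$ is a Cayley graph, its automorphism group contains the regular right-translation action of $G$, and I would argue this action permutes the canonical partition: the point is that the canonical partition is an invariant of the graph (the classes are distinguished combinatorially, provided $\Gamma$ is proper; in the degenerate cases $\lambda_1=\lambda_2$ or $m=1$ or $n=1$ one chooses a partition that is translation-invariant, which is the standard convention), so each right translation sends classes to classes. Hence the class $N$ containing $1_G$ is a subgroup: if $x,y\in N$ then right-translating the class $N$ by $x$ gives a class containing $x$, which must be $N$ itself, so $Nx=N$ and $x^{-1}\in N$, $xy\in N$; thus $N\le G$ and $\mathcal{P}$ is the partition into right $N$-cosets. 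Then the common-neighbour counts of the DDG, translated back into coefficients of $\underline{D}\cdot\underline{D}^{-1}$ exactly as above, give $\underline{D}\cdot\underline{D}^{-1}=ke+\lambda_1\underline{N}^\#+\lambda_2(\underline{G}-\underline{N})$, so $D$ is a DDS relative to $N$ with parameters $(m,n,k,\lambda_1,\lambda_2)$; it is identity-free and reversible because $\Gamma$ is simple and undirected.

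The only genuinely non-formal point is the claim in~(2) that the canonical partition can be taken to be right-translation-invariant, so that $N$ is forced to be a subgroup; everything else is a mechanical translation between the group ring and neighbourhood counts. I expect this to be the part a referee would want spelled out, but since the lemma is asserted to follow ``immediately from the definitions,'' the intended reading is presumably that one simply takes the canonical partition to be the coset partition (which is always a valid choice making a translation-invariant DDG structure when one exists), making the subgroup property a definition-chasing triviality rather than a theorem. I would therefore keep the write-up short: state the common-neighbour/coefficient identity once, invoke it in both directions, and note the left/right convention that matches the statement's ``right $N$-cosets''.
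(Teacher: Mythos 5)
Your proposal is correct and is exactly the definition-unwinding the paper intends: the paper gives no written proof (it states that the lemma ``follows immediately from the definitions''), and your identification of common-neighbour counts with the coefficients of $\underline{D}\cdot\underline{D}^{-1}$ (using reversibility to reconcile $\underline{D}^{-1}\cdot\underline{D}$ with $\underline{D}\cdot\underline{D}^{-1}$, and $hg^{-1}\in N$ with membership in a common right $N$-coset) is the standard argument. Your extra care in part~(2) about the canonical partition being right-translation-invariant, so that the class of $1_G$ is a subgroup, is a reasonable filling-in of the one non-formal step the paper glosses over.
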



In the following two lemmas, we provide known families of DDSs which will be used further for constructing DDGs. 


\begin{lemm}\label{dds}
Let $q$ be a prime power and $r$ and $s$ positive integers such that $r\geq s\geq 1$. Then any abelian group of order $q^{2r-s}(q^s-1)/(q-1)$ having an elementary abelian subgroup of order~$q^r$ has a divisible difference set satisfying the intersection condition with parameters
$$m=q^{2r-2s}(q^s-1)/(q-1),~n=q^s,~k=q^{2r-s-1}(q^s-1)/(q-1),$$
$$\lambda_1=q^{2r-s-1}(q^{s-1}-1)/(q-1),~\lambda_2=q^{2r-s-2}(q^s-1)/(q-1).$$
\end{lemm}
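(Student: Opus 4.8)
The plan is to realize the claimed divisible difference set inside a concrete group built from a vector space, using the classical construction of relative difference sets in elementary abelian groups and then ``spreading'' it out to the larger abelian group. First I would fix the elementary abelian group $E$ of order $q^r$ sitting inside the given abelian group $\widehat{G}$ of order $q^{2r-s}(q^s-1)/(q-1)$, and write $\widehat{G}$ additively as a direct sum $\widehat{G}=E\oplus K$ for a suitable complement $K$ (this is possible since $E$ is elementary abelian, hence a direct summand of the abelian group $\widehat{G}$ once we know $|E|=q^r$ divides $|\widehat{G}|$, which it does). The order count gives $|K|=q^{2r-2s}(q^s-1)/(q-1)$. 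The subgroup $N$ relative to which the DDS will be defined is to be taken of order $n=q^s$; the natural choice is an $\mathbb{F}_q$-subspace $N\leq E$ of dimension $s$, so that $\widehat{G}/N$ has order exactly $m=q^{2r-2s}(q^s-1)/(q-1)$, matching the parameter $m$.

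Next I would recall the building block: identify $E$ with $\mathbb{F}_{q^r}^{+}$ (or with an $r$-dimensional $\mathbb{F}_q$-space) and use the fact that a hyperplane-type construction yields a relative difference set. Concretely, there is a classical semiregular RDS in $\mathbb{F}_q^{2t}$ relative to an $\mathbb{F}_q$-hyperplane-complement of dimension $t$ with parameters of ``affine plane'' type; iterating/combining $(q^s-1)/(q-1)$ translates of such objects (one for each point of a projective space $\mathrm{PG}(s-1,q)$, which has exactly $(q^s-1)/(q-1)$ points) produces a set $D\subseteq\widehat{G}$ of the required size $k=q^{2r-s-1}(q^s-1)/(q-1)$. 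The key computation is then the group-ring identity
\begin{equation}\label{proofdds}
\underline{D}\cdot\underline{D}^{-1}=k\cdot 1_{\widehat{G}}+\lambda_1\underline{N}^{\#}+\lambda_2(\underline{\widehat{G}}-\underline{N}),
\end{equation}
with $\lambda_1=q^{2r-s-1}(q^{s-1}-1)/(q-1)$ and $\lambda_2=q^{2r-s-2}(q^s-1)/(q-1)$. I would verify \eqref{proofdds} by computing the multiplicity of a generic $g\in\widehat{G}$ as a difference $d_1-d_2$ with $d_i\in D$: split into the cases $g=0$, $g\in N^{\#}$, and $g\notin N$, and in each case reduce the count to counting solutions of a system of $\mathbb{F}_q$-linear equations coming from the hyperplane/projective-space description of $D$. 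The intersection condition ($|D\cap(N+x)|$ independent of $x$) should fall out of the construction directly, since each of the $(q^s-1)/(q-1)$ translated blocks meets each $N$-coset in a controlled number of points; summing gives $|D\cap(N+x)|=q^{2r-s-1}$, independent of $x$.

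The main obstacle I expect is bookkeeping: making the ``$(q^s-1)/(q-1)$ translates of an affine-plane RDS'' construction precise enough that \eqref{proofdds} can be checked uniformly for all three residue classes of $g$ simultaneously, rather than ad hoc. A clean way to organize this is to parametrize $D$ by a projective space: view $E=\mathbb{F}_q^{r}$, pick a spread or a family of $s$-dimensional subspaces indexed by $\mathrm{PG}(s-1,q)$, and define $D$ as a union of cosets $U_P+f(P)$, $P\in\mathrm{PG}(s-1,q)$, with $f$ chosen so that differences from distinct blocks never land in $N$ and differences within a block land in $N$ with the right frequency. Then \eqref{proofdds} becomes a sum of two contributions — within-block, giving the $k\cdot 1_{\widehat{G}}+\lambda_1\underline{N}^{\#}$ part, and across-block, giving the $\lambda_2(\underline{\widehat{G}}-\underline{N})$ part — each of which is a routine (if lengthy) count. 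Once the group-ring identity and the intersection condition are established over this specific $\widehat{G}=E\oplus K$, the statement for an arbitrary abelian group of the given order containing an elementary abelian subgroup of order $q^r$ follows because the construction only used the direct-sum decomposition $\widehat{G}=E\oplus K$, which exists for every such group.
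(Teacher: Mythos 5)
Your proposal has genuine gaps, both structural and numerical. First, the claim that the elementary abelian subgroup $E$ of order $q^r$ is automatically a direct summand of the abelian group $\widehat{G}$ is false: an elementary abelian subgroup of an abelian $q$-group need not split off (e.g.\ a subgroup isomorphic to $\mathbb{Z}_2^3$ containing the unique involution of the $\mathbb{Z}_4$-factor inside $\mathbb{Z}_4\times\mathbb{Z}_2^3$ has no complement). Since you explicitly reduce the general case to the split case at the end (``the construction only used the direct-sum decomposition $\widehat{G}=E\oplus K$, which exists for every such group''), this breaks the argument. The fix is that no complement is needed: the paper's construction (following Pott, Theorem~2.3.6) only uses a transversal $\{g_0,\ldots,g_{l-1}\}$ for $H=E$ in $G$, with $l=q^{r-s}(q^s-1)/(q-1)$, and sets $D=\bigcup_i H_ig_i$ where $H_0,\ldots,H_{l-1}$ are exactly the $(r-1)$-dimensional subspaces of $H$ not containing the $s$-dimensional forbidden subspace $N$. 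Your set $D$ is never actually pinned down: ``iterating/combining $(q^s-1)/(q-1)$ translates of an affine-plane-type RDS'' does not match this (the correct number of blocks is $l$, not $(q^s-1)/(q-1)$, and each block is a hyperplane coset of size $q^{r-1}$), so the central group-ring identity cannot be verified from what you wrote.

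Second, your claimed intersection number is arithmetically impossible: if $|D\cap(N+x)|$ is constant, it must equal $k/m=q^{2r-s-1}/q^{2r-2s}=q^{s-1}$, not $q^{2r-s-1}$. This is also where the only genuinely new content of the lemma lives, since the DDS property itself is quoted from Pott. The paper's verification is short and worth internalizing: for each $g$ there is a unique $i$ with $Ng\subseteq Hg_i$, so $D\cap Ng=H_ig_i\cap Ng$, and since $H_i$ is a hyperplane of $H$ not containing $N$ one has $\underline{H_i}\cdot\underline{N}=q^{s-1}\underline{H}$, whence $|H_ig_i\cap Ng|=q^{s-1}$ independently of $g$. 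Your proposal as it stands is a plan with the two decisive steps (the definition of $D$ and the coset count) either missing or incorrect.
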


\begin{proof}
Let us recall the construction of a DDS from~\cite[Theorem~2.3.6]{Pott}. Let $H$ be a vector space of dimension~$r$ over a finite field $\mathbb{F}_q$ of order~$q$, $N$ an $s$-dimensional subspace of $H$, and $G$ an arbitrary abelian group of order~$q^{2r-s}(q^s-1)/(q-1)$ containing $H$ as a subgroup. Let $\{g_0,\ldots,g_{l-1}\}$, where $l=q^{r-s}(q^s-1)/(q-1)$, be a transversal for $H$ in $G$. There are exactly $l$ $(r-1)$-dimensional subspaces $H_0,\ldots,H_{l-1}$ of $H$ not containing~$N$. Put 
$$D=\bigcup \limits_{i\in\{0,\ldots,l-1\}} H_ig_i.$$ 
Then $D$ is a DDS with the forbidden subgroup~$N$ and the required parameters by~\cite[Theorem~2.3.6]{Pott}. It remains to verify only that $D$ satisfies the intersection condition. Let $g\in G$. Since $\{g_0,\ldots,g_{l-1}\}$ is a transversal for $H$ in $G$ and $N\leq H$, there is $i\in\{0,\ldots,l-1\}$ such that $Ng\subseteq Hg_i$ and $Ng\cap Hg_j=\varnothing$ for all $j\in \{0,\ldots,l-1\}$ with $j\neq i$. Therefore 
$$D\cap Ng=(D\cap Hg_i)\cap Ng=H_ig_i\cap Ng.$$
As $H_i$ is an $(r-1)$-dimensional subspace of $H$ not containing $N$, we have $\underline{H_i}\cdot\underline{N}=q^{s-1}\underline{H}$ and hence
$$|D\cap Ng|=|H_ig_i\cap Ng|=q^{s-1}$$
for every $g\in G$. Thus, $D$ satisfies the intersection condition. 
\end{proof}

\begin{lemm}\cite[Theorem~1]{DJ}\label{rds}
Let $i$ and $j$ be positive integers such that $i/2<j\leq i$. Then the group $\mathbb{Z}_4^j\times \mathbb{Z}_2^{i-j}$ has a semiregular relative difference set with parameters~$(2^{i},2^j,2^{i},2^{i-j})$.
\end{lemm}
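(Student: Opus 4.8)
The plan is to construct the required relative difference set inside the Galois ring of order $4^i$ and then transport it to $\mathbb{Z}_4^j\times\mathbb{Z}_2^{i-j}$ through a quotient. Let $R=\mathrm{GR}(4,i)$ be the Galois ring of characteristic $4$ and rank $i$ (so $|R|=4^i$): its additive group is isomorphic to $\mathbb{Z}_4^i$, the ideal $2R$ has order $2^i$ and is elementary abelian (so $2R\cong\mathbb{Z}_2^i$), and every $x\in R$ has a unique expression $x=a+2b$ with $a,b$ in the Teichmüller set $\mathcal{T}=\{0\}\cup\{\zeta^\ell:0\le\ell<2^i-1\}$, where $\zeta$ has multiplicative order $2^i-1$. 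The reduction map $R\to R/2R\cong\mathbb{F}_{2^i}$ restricts to a bijection of $\mathcal{T}$ onto $\mathbb{F}_{2^i}$ whose inverse $\tau$ (the Teichmüller section) is multiplicative; I write $\bar x$ for the image of $x$ in $\mathbb{F}_{2^i}$ and $z\mapsto\sqrt z$ for the inverse of the Frobenius $z\mapsto z^2$ of $\mathbb{F}_{2^i}$.

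The first — and main — step is to prove that $\mathcal{T}$ is a semiregular relative difference set in $(R,+)\cong\mathbb{Z}_4^i$ relative to $2R$, with parameters $(2^i,2^i,2^i,1)$, that is, $\underline{\mathcal{T}}\cdot\underline{\mathcal{T}}^{-1}=2^i\cdot 1_R+(\underline{R}-\underline{2R})$. Since $\tau$ is a bijection onto $\mathbb{F}_{2^i}$, the set $\mathcal{T}$ is a transversal of $2R$, so $|\mathcal{T}|=2^i$ and the difference of two distinct elements of $\mathcal{T}$ never lies in $2R$; counting ordered pairs ($2^i(2^i-1)$ of them, against $|R\setminus 2R|=2^i(2^i-1)$) then reduces the claim to the injectivity of $(a,b)\mapsto a-b$ on pairs of distinct elements of $\mathcal{T}$. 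For this I would use the Teichmüller addition formula $a+b=\tau(\bar a+\bar b)+2\,\tau\bigl(\sqrt{\bar a\,\bar b}\bigr)$ for $a,b\in\mathcal{T}$ — obtained by writing $a+b=s+2t$ with $s,t\in\mathcal{T}$, squaring (so the $2t$ disappears because $4=0$ in $R$), and using multiplicativity of $\mathcal{T}$ and $\tau$. If $a-b=a'-b'$ with all four in $\mathcal{T}$, rewrite it as $a+b'=a'+b$, apply the formula to both sides, and read off from uniqueness of the normal form that $\bar a+\bar b'=\bar a'+\bar b$ and $\bar a\,\bar b'=\bar a'\,\bar b$ in $\mathbb{F}_{2^i}$; setting $c=\bar a+\bar a'=\bar b+\bar b'$ and substituting gives $c(\bar a+\bar b)=0$, so either $c=0$ (hence $a=a'$ and $b=b'$) or $\bar a=\bar b$ (hence $a=b$, which is excluded). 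This is where essentially all the work lies; alternatively one can verify the same step through a Gauss-sum computation over $R$, showing $|\chi(\underline{\mathcal{T}})|^2=2^i$ for every character $\chi$ of $R$ nontrivial on $2R$, but the combinatorial route looks shorter.

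Next I would push $\mathcal{T}$ down to the target group. Pick a subgroup $H\le 2R$ with $|H|=2^{i-j}$ (possible since $2R\cong\mathbb{Z}_2^i$ and $0\le i-j\le i-1$; concretely, identifying $R$ additively with $\mathbb{Z}_4^i$, take $H=\{0\}^j\times 2\mathbb{Z}_4^{i-j}$), and set $\bar G=R/H$ and $\bar N=2R/H$. Then $|\bar G|=2^{i+j}$, $|\bar N|=2^j$, and $2\bar G=(2R+H)/H=\bar N$. The preimage in $R$ of the $2$-torsion of $\bar G$ is $\{x:2x\in H\}$, which has order $|H|\cdot|2R|=2^{2i-j}$, so $\bar G$ has exactly $2^i$ elements of order at most $2$; being an abelian $2$-group of order $2^{i+j}$ and exponent $4$, it is therefore isomorphic to $\mathbb{Z}_4^j\times\mathbb{Z}_2^{i-j}$, with $\bar N$ corresponding to $2\mathbb{Z}_4^j\times\{0\}$. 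Because differences of distinct elements of $\mathcal{T}$ avoid $2R\supseteq H$, the set $\mathcal{T}$ injects into $\bar G$, so its image $\bar{\mathcal{T}}=(\mathcal{T}+H)/H$ still has $2^i$ elements. Applying the ring homomorphism $\mathbb{Z}R\to\mathbb{Z}[\bar G]$ induced by $R\to\bar G$ to the identity proved in the first step, and using $\underline{R}\mapsto|H|\,\underline{\bar G}$, $\underline{2R}\mapsto|H|\,\underline{\bar N}$, $\underline{\mathcal{T}}\mapsto\underline{\bar{\mathcal{T}}}$, one obtains
\[
\underline{\bar{\mathcal{T}}}\cdot\underline{\bar{\mathcal{T}}}^{-1}=2^i\cdot 1_{\bar G}+2^{i-j}\bigl(\underline{\bar G}-\underline{\bar N}\bigr).
\]
Hence $\bar{\mathcal{T}}$ is a relative difference set in $\mathbb{Z}_4^j\times\mathbb{Z}_2^{i-j}$ with forbidden subgroup of order $n=2^j$, index $m=|\bar G:\bar N|=2^i$, size $k=2^i$, and $\lambda=k/n=2^{i-j}$; since $k=m$ it is semiregular, and its parameters are exactly $(2^i,2^j,2^i,2^{i-j})$.

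The main obstacle is the first step — proving that the Teichmüller set is a relative difference set; once that is settled, the rest is routine manipulation of finite abelian $2$-groups and of the augmented group-ring identity under a quotient homomorphism. I note finally that the construction works for every $1\le j\le i$ and does not use the hypothesis $i/2<j$.
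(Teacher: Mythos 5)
The paper does not prove this lemma at all: it is quoted verbatim from Davis--Jedwab \cite{DJ}, so there is no internal proof to compare against. Your argument is a genuine, self-contained proof and, as far as I can check, it is correct. The key step --- that the Teichm\"uller set $\mathcal{T}$ of $\mathrm{GR}(4,i)$ is a $(2^i,2^i,2^i,1)$ relative difference set in $(\mathbb{Z}_4^i,+)$ relative to $2R$ --- goes through: the addition formula $a+b=\tau(\bar a+\bar b)+2\tau\bigl(\sqrt{\bar a\bar b}\bigr)$ is standard and your squaring sketch does yield it (one squares to get $a^2+b^2=s^2+2ab$, recognizes the right-hand side as a canonical $2$-adic form, and substitutes square roots), and the ensuing injectivity argument $c(\bar a+\bar b)=0$ together with the count $|R\setminus 2R|=2^i(2^i-1)$ is exactly right. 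The projection through a subgroup $H\le 2R$ of order $2^{i-j}$, the identification of $R/H$ with $\mathbb{Z}_4^j\times\mathbb{Z}_2^{i-j}$ by counting involutions, and the resulting parameters $(2^i,2^j,2^i,2^{i-j})$ with $k=m$ (hence semiregularity in the sense used in Section~2.4) are all correct. This is a different route from Davis--Jedwab, who obtain the family by recursive building-set constructions rather than from a single algebraic object; your approach buys a short, explicit description of the difference set and, as you observe, proves the statement for the full range $1\le j\le i$ rather than only $i/2<j\le i$ --- which is harmless here, since the paper only invokes the lemma with $i=r$, $j=r-1$, $r\ge 3$.
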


\section{Proof of Theorem~\ref{main}} 

Let $\mathcal{X}=(V,S)$ be a Higmanian scheme and $S=\{s_0,s_1,s_2,s_3,s_4\}$ the standard ordering of~$S$, i.e. $s_0=\textbf{1}_V$, $e_0=s_0\cup s_1$ and $e_1=s_0\cup s_1\cup s_2$ are nontrivial parabolics of $\mathcal{X}$, and $n_{s_3}\leq n_{s_4}$. Given $i,j,k\in\{0,\ldots,4\}$, the numbers $n_{s_i}$, $c_{s_is_j}^{s_k}$, and the intersection matrix $C_{s_i}$ are denoted by $n_i$, $c_{ij}^k$, and $C_i$, respectively. We start the proof with the lemma which provides a parametrization of~$C_i$'s. 

\begin{lemm}\label{intermatr}
In the above notations,

$$C_2=\begin{pmatrix}
 0 & 0 & 1 & 0 & 0\\
 0 & 0 & \alpha-1 & 0 & 0\\
 \alpha\beta & \alpha\beta & \alpha(\beta-1)& 0&0\\
 0& 0 & 0 & \beta\gamma & \beta\gamma\\
 0&  0& 0& \beta(\alpha-\gamma)&\beta(\alpha-\gamma)\\
\end{pmatrix},$$

$$C_3=\begin{pmatrix}
 0& 0& 0& 1&0\\
 0&  0& 0& \gamma-1&\gamma\\
 0&  0& 0& \beta\gamma&\beta\gamma\\
 \gamma\delta& \frac{\delta\gamma(\gamma-1)}{\alpha-1} & \frac{\delta\gamma^2}{\alpha}& 2\varepsilon\gamma-\tau-\frac{\alpha(\varepsilon\gamma-\tau)}{\gamma}&\varepsilon\gamma-\tau\\
 0&  \frac{\delta\gamma(\alpha-\gamma)}{\alpha-1}& \frac{\delta\gamma(\alpha-\gamma)}{\alpha}& \frac{(\varepsilon\gamma-\tau)(\alpha-\gamma)}{\gamma}&\tau
\end{pmatrix},$$

$$C_4=\begin{pmatrix}
 0& 0& 0& 0& 1\\
 0&  0& 0& \alpha-\gamma&\alpha-\gamma-1\\
 0&  0& 0& \beta(\alpha-\gamma)&\beta(\alpha-\gamma) \\
 0&  \frac{\delta\gamma(\alpha-\gamma)}{\alpha-1}& \frac{\delta\gamma(\alpha-\gamma)}{\alpha}& \frac{(\varepsilon\gamma-\tau)(\alpha-\gamma)}{\gamma}& \tau\\
 \delta(\alpha-\gamma)& \frac{\delta(\alpha-\gamma)(\alpha-\gamma-1)}{\alpha-1}&  \frac{\delta(\alpha-\gamma)^2}{\alpha}& \frac{\tau(\alpha-\gamma)}{\gamma}& \varepsilon(\alpha-\gamma)-\tau
\end{pmatrix},$$
where
$$\alpha=n_1+1,~\beta=\frac{n_2}{n_1+1},~\gamma=\frac{n_3(n_1+1)}{n_3+n_4},~\delta=\frac{n_3+n_4}{n_1+1},~\varepsilon=\delta-\beta-1,~\tau=c_{43}^4.$$

\end{lemm}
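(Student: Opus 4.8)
The plan is to compute all intersection matrices of $\mathcal{X}$ directly from the structural constraints forced by the two parabolics $e_0\subseteq e_1$ and the indecomposability condition, and then to recognise the resulting entries as the displayed expressions in $\alpha,\beta,\gamma,\delta,\varepsilon,\tau$. The starting point is bookkeeping of valencies. Since $e_0$ and $e_1$ are parabolics with $e_0=s_0\cup s_1$ and $e_1=s_0\cup s_1\cup s_2$, the quotient scheme $\mathcal{X}_{V/e_0}$ has rank $3$ and the quotient $\mathcal{X}_{V/e_1}$ has rank $2$; these tell us that $s_2$ ``fills'' each $e_1$-class with $e_0$-blocks and that $s_3,s_4$ together fill the complement of $e_1$. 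Concretely, $n_2$ is a multiple of $n_1+1=|U_0|$ (the size of an $e_0$-class), which is why $\beta=n_2/(n_1+1)$ is an integer, and $n_3+n_4$ is a multiple of $n_1+1$ giving the integer $\delta=(n_3+n_4)/(n_1+1)$. I would record $n_{e_0}=\alpha$, $n_{e_1}=\alpha(\beta+1)$, $|V|=\alpha(\beta+1)(\delta+1)$ type identities first, since every intersection number will be pinned down by counting inside a single $e_0$-block, inside a single $e_1$-block, or across the appropriate quotients.

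Next I would fill in $C_2$. The first two columns/rows involve only $s_0,s_1,s_2$, which live inside $e_1$; since $e_0$ is a parabolic contained in $\rad$ of everything inside $e_1$ that respects $e_0$, the relevant intersection numbers are forced: $s_1$ has valency $\alpha-1$, the relation $s_2$ restricted to an $e_1$-block is a union of complete $e_0$-block-to-$e_0$-block bipartite graphs, so $c_{2j}^k$ for $j,k\in\{0,1,2\}$ are the ``complete multipartite'' values $0,0,1$; $0,0,\alpha-1$; $\alpha\beta,\alpha\beta,\alpha(\beta-1)$. The $2\times2$ lower-right block of $C_2$ (indices $3,4$) counts, for $(x,y)\in s_3$ or $s_4$, how many $z$ with $(x,z)\in s_2$ and $(z,y)\in s_j$ for $j\in\{3,4\}$: here $z$ ranges over the $e_1$-block of $x$, and by the indecomposability condition \eqref{radical}/\eqref{equiv2} every such $z$ is joined to $y$ by $s_3$ or $s_4$ in the same proportion $\gamma:(\alpha-\gamma)$ as inside a generic block, giving $\beta\gamma$ and $\beta(\alpha-\gamma)$ respectively; this is exactly where the parameter $\gamma=n_3(n_1+1)/(n_3+n_4)$ enters, as the number of $s_3$-edges from a fixed point to a fixed $e_0$-block outside $e_1$.

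For $C_3$ and $C_4$ I would exploit commutativity $c_{ij}^k n_k=c_{ik}^j n_j$ (the standard valency relation in a scheme) together with the column-sum relations $\sum_k c_{ij}^k = n_j$ to reduce the number of independent unknowns to essentially one, which we name $\tau=c_{43}^4$. The block structure already gives all entries in rows/columns $0,1,2$ of $C_3$ (these are transposes of entries already computed or are forced by $e_0,e_1$): the appearance of denominators $\alpha-1$ and $\alpha$ comes precisely from dividing $c_{3j}^1$ by $n_1=\alpha-1$ and $c_{3j}^2$ by $n_2=\alpha\beta$ via $c_{ij}^kn_k=c_{ik}^jn_j$, and $c_{33}^2=\beta\gamma$ (already obtained, by symmetry, in $C_2$). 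The genuinely $2\times 2$ part — the entries $c_{33}^3,c_{33}^4,c_{34}^3=c_{43}^3,c_{34}^4=c_{44}^3$ of the bottom-right corner — is then determined by: the row sum in row $3$ of $C_3$ equals $n_3$; the valency relations linking $c_{33}^4,c_{34}^3,c_{44}^3$; and the definition $\tau=c_{43}^4$. Solving this small linear system yields $c_{33}^3=2\varepsilon\gamma-\tau-\alpha(\varepsilon\gamma-\tau)/\gamma$, $c_{34}^3=\varepsilon\gamma-\tau$, $c_{34}^4=(\varepsilon\gamma-\tau)(\alpha-\gamma)/\gamma$, $c_{44}^3=\tau$, with $\varepsilon=\delta-\beta-1$ accounting for the ``leftover'' valency once the contributions through $e_1$ (namely $\beta\gamma$) and the diagonal are subtracted. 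Finally $C_4$ is obtained from $C_3$ by the substitution $\gamma\mapsto\alpha-\gamma$ in the relations that define $s_4$ in place of $s_3$, together with the symmetry $c_{ij}^k=c_{ji}^k$ and the valency law; in particular the shared entries of $C_3$ and $C_4$ in positions $(3,4)$ and $(4,3)$ are literally equal, which the displayed matrices reflect.

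The main obstacle is the bottom-right $2\times 2$ blocks of $C_3$ and $C_4$: everything touching only $s_0,s_1,s_2$ is rigid combinatorics of complete multipartite bipartite graphs, but the interaction of $s_3$ and $s_4$ with themselves is not forced by the parabolic structure alone and must be expressed through the single free parameter $\tau$. The delicate point is checking consistency — that the various valency relations $c_{ij}^kn_k=c_{ik}^jn_j$ applied to different triples all yield the same value for, say, $c_{33}^3$ and do not over-determine the system — and that the division by $\gamma$ and by $\alpha-\gamma$ is legitimate, i.e.\ that $0<\gamma<\alpha$, which follows from $n_3,n_4>0$. Once the free parameter is isolated and these compatibility checks pass, substituting back $\alpha=n_1+1$, $\beta=n_2/(n_1+1)$, $\gamma=n_3(n_1+1)/(n_3+n_4)$, $\delta=(n_3+n_4)/(n_1+1)$ and reading off the entries completes the proof.
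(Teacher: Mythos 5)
Your derivation is correct, but it takes a genuinely different route from the paper's. The paper does not compute the matrices at all: it reads them off Higman's tables of feasible intersection matrices for symmetric imprimitive rank-$5$ schemes \cite{Hig}, after permuting rows and columns to match the standard ordering, renaming the parameters, specializing the rank-$3$ quotient to the imprimitive (complete multipartite) case, and importing from \cite{KMZ} a correction of a misprint in the entry $(C_3)_{33}$. You instead derive everything from first principles: $C_2$ and the first three rows of $C_3,C_4$ are forced by the parabolic structure, and the bottom-right $2\times2$ corners are pinned down by the column-sum identity $\sum_r c_{ri}^t=n_i$ together with the symmetric-scheme valency relation $n_tc_{rs}^t=n_rc_{ts}^r$, leaving the single free parameter $\tau=c_{43}^4$; one checks, e.g., that the $t=3$ and $t=4$ column sums of $C_3$ give $c_{33}^3+c_{43}^3=\varepsilon\gamma$ and $c_{33}^4=\varepsilon\gamma-\tau$, and the valency relation then yields $c_{43}^3=(\varepsilon\gamma-\tau)(\alpha-\gamma)/\gamma$, reproducing the displayed entries exactly. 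Your approach buys a self-contained, independently verifiable proof (which in particular would detect the misprint in \cite{Hig} rather than rely on \cite{KMZ} to repair it), at the cost of the bookkeeping you describe; the paper's citation is of course shorter. The one step in your sketch worth making fully explicit is the constancy of $|xs_3\cap U|=\gamma$ over all $e_0$-blocks $U$ outside the $e_1$-class of $x$: this is not automatic but follows from condition $(4)$, equivalently Eq.~\eqref{equiv2}, because each such $U$ meets both $xs_3$ and $xs_4$, which forces $c_{31}^3+1=c_{31}^4$ and hence a common value $\gamma$; your appeal to indecomposability is the right ingredient, it just needs this short argument spelled out.
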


\begin{proof}
The intersection matrices of a symmetric imprimitive scheme of rank~$5$ having a parabolic for which a rank of the quotient scheme is equal to~$3$ are given in~\cite[p.~213-214]{Hig}. The matrices $C_2$, $C_3$, and $C_4$ are exactly the matrices $M_4$, $M_2$, and $M_3$ from~\cite[p.~213-214]{Hig}, respectively, in which: 
\begin{enumerate}

\tm{1} the rows and columns are reordered according to the permutation~$(234)$ corresponding to the standard ordering of the basis relations; 

\tm{2} the parameters $v$, $l$, $S$, $k$, $\lambda$ are replaced by $\alpha$, $\beta$, $\gamma$, $\delta$, $\varepsilon$, respectively, to avoid an ambiguity; 

\tm{3} the parameters $\lambda$, $\mu$, $\widetilde{\lambda}$, $\widetilde{\mu}$ are taken equal to~$\delta-\beta-1$, $\delta$, $\beta-1$, $0$, respectively ($k-l-1$, $k$, $l-1$, $0$, respectively, in the notation of~\cite{Hig}), because in our case the quotient scheme of rank~$3$ is imprimitive and hence the parameters $(\lambda,\mu)$ and $(\widetilde{\lambda},\widetilde{\mu})$ are the parameters of strongly regular graphs isomorphic to a complete multipartite graph with parts of the same size and disjoint union of cliques of the same size, respectively;

\tm{4} the value of $(C_3)_{33}$ ($(M_2)_{22}$ in the notation of~\cite[Section~9]{Hig}) is taken from~\cite[p.~265]{KMZ} because there is a misprint in the expression for this entry in the original paper~\cite{Hig}. 

\end{enumerate}
\end{proof}

\begin{lemm}\label{addcond}
In the above notation, the following equalities are equivalent:
$$c_{33}^3=c_{33}^4,~c_{43}^3=c_{43}^4,~c_{44}^3=c_{44}^4,$$
\begin{equation}\label{tau}
\tau=\frac{\varepsilon\gamma(\alpha-\gamma)}{\alpha}.
\end{equation}
\end{lemm}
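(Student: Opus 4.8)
The plan is to verify the four-way equivalence by direct computation from the explicit intersection matrices in Lemma~\ref{intermatr}, reading off the relevant entries from $C_3$ and $C_4$. First I would write down the six entries involved: from $C_3$ we have $c_{33}^3=(C_3)_{33}=2\varepsilon\gamma-\tau-\frac{\alpha(\varepsilon\gamma-\tau)}{\gamma}$ and $c_{33}^4=(C_3)_{43}=\varepsilon\gamma-\tau$; from $C_4$ we have $c_{44}^3=(C_4)_{34}=\frac{\tau(\alpha-\gamma)}{\gamma}$ and $c_{44}^4=(C_4)_{44}=\varepsilon(\alpha-\gamma)-\tau$; and for the middle pair $c_{43}^3=(C_3)_{44}=\frac{(\varepsilon\gamma-\tau)(\alpha-\gamma)}{\gamma}$ and $c_{43}^4=(C_3)_{54}=\tau$ (equivalently $(C_4)_{43}$ and $(C_4)_{44}$'s neighbours, which agree by commutativity and the shared block). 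The strategy is to show that each of the three equalities $c_{33}^3=c_{33}^4$, $c_{43}^3=c_{43}^4$, $c_{44}^3=c_{44}^4$ is, after clearing the denominator $\gamma$ (which is nonzero since $n_3>0$ and $\alpha>0$), equivalent to the single linear relation $\alpha\tau=\varepsilon\gamma(\alpha-\gamma)$, i.e. to Eq.~\eqref{tau}.

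Carrying this out: the equality $c_{43}^3=c_{43}^4$ reads $\frac{(\varepsilon\gamma-\tau)(\alpha-\gamma)}{\gamma}=\tau$, which multiplied by $\gamma$ becomes $(\varepsilon\gamma-\tau)(\alpha-\gamma)=\tau\gamma$; expanding gives $\varepsilon\gamma(\alpha-\gamma)-\tau(\alpha-\gamma)=\tau\gamma$, hence $\varepsilon\gamma(\alpha-\gamma)=\tau(\alpha-\gamma)+\tau\gamma=\tau\alpha$, which is exactly Eq.~\eqref{tau}. Similarly $c_{44}^3=c_{44}^4$ reads $\frac{\tau(\alpha-\gamma)}{\gamma}=\varepsilon(\alpha-\gamma)-\tau$; multiplying by $\gamma$ and rearranging yields $\tau(\alpha-\gamma)+\tau\gamma=\varepsilon\gamma(\alpha-\gamma)$, i.e. again $\tau\alpha=\varepsilon\gamma(\alpha-\gamma)$. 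Finally, for $c_{33}^3=c_{33}^4$ one computes $c_{33}^3-c_{33}^4=\bigl(2\varepsilon\gamma-\tau-\frac{\alpha(\varepsilon\gamma-\tau)}{\gamma}\bigr)-(\varepsilon\gamma-\tau)=\varepsilon\gamma-\frac{\alpha(\varepsilon\gamma-\tau)}{\gamma}=\frac{\varepsilon\gamma^2-\alpha\varepsilon\gamma+\alpha\tau}{\gamma}=\frac{\alpha\tau-\varepsilon\gamma(\alpha-\gamma)}{\gamma}$, which vanishes precisely when Eq.~\eqref{tau} holds. Thus each of the three stated equalities is individually equivalent to Eq.~\eqref{tau}, and hence all four conditions are pairwise equivalent.

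I do not expect a genuine obstacle here; the lemma is a bookkeeping statement and the only things to be careful about are (a) correctly matching the symbolic entries of $C_3$ and $C_4$ to the intersection numbers $c_{ij}^k$ under the standard ordering $s_0,\dots,s_4$ indexing rows/columns $0,\dots,4$, and (b) noting that $\gamma\neq 0$ so that clearing denominators is legitimate — this follows from $\gamma=\frac{n_3(n_1+1)}{n_3+n_4}$ with $n_1,n_3,n_4$ positive valencies. One should also double-check the entries against the symmetry relations $n_k c_{ij}^k = n_i c_{ik^*}^{j^*} = \cdots$ for consistency, but given Lemma~\ref{intermatr} this is automatic. If anything is delicate it is merely confirming that the two expressions I identified as $c_{43}^3$ and $c_{43}^4$ (appearing in both $C_3$ and $C_4$) are consistently placed; commutativity of the symmetric scheme guarantees $c_{43}^k=c_{34}^k$, so no ambiguity arises.
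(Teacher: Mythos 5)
Your proof is correct and is precisely the explicit computation that the paper's one-line proof of Lemma~\ref{addcond} alludes to: each of the three equalities reduces, after clearing the nonzero denominator $\gamma$, to $\alpha\tau=\varepsilon\gamma(\alpha-\gamma)$. The only blemish is notational — under the paper's convention $(C_s)_{rt}=c_{rs}^t$ your subscripts on the matrix entries (e.g.\ ``$(C_3)_{43}$'' for $c_{33}^4$) are mislabelled, but the six values you actually substitute are all the correct ones, so the argument stands.
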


\begin{proof}
The statement of the lemma can be verified by an explicit computation using the expressions for the intersection numbers of $\mathcal{X}$ from Lemma~\ref{intermatr}.
\end{proof}

\begin{proof}[Proof of Theorem~\ref{main}]
Let $\Gamma=(V,E)$, where $E=s_2\cup s_i$ for some $i\in \{3,4\}$. Let $j\in \{1,\ldots,4\}$. The number of common neighbors of $x,y\in V$ such that $(x,y)\in s_j$ is equal to
\begin{equation}\label{neighbors}
|xE\cap yE|=\sum \limits_{\substack{r,t\in S\\r,t\subseteq E}} c_{rt}^{s_j}=c_{22}^j+2c_{2i}^j+c_{ii}^j
\end{equation}
(here we use that $\mathcal{X}$ is symmetric and hence commutative). Denote the sum from the right-hand side of the above equality by $c_j$. Since $e_1=s_0\cup s_1\cup s_2$ and $e_0=s_0\cup s_1$ are the only nontrivial parabolics of $\mathcal{X}$, the graph $\Gamma$ is a DDG if and only if 
\begin{equation}\label{part1}
c_1=c_2,~c_3=c_4
\end{equation}
and the canonical partition of $\Gamma$ is the partition into the classes of~$e_1$
or 
\begin{equation}\label{part2}
c_2=c_3=c_4
\end{equation}
and the canonical partition of $\Gamma$ is the partition into the classes of~$e_0$. Clearly, $\Gamma$ is a $(v,k,\lambda)$-graph if and only if $c_1=c_2=c_3=c_4$, i.e. both of Eqs.~\eqref{part1} and~\eqref{part2} hold. Therefore the rest of the proof follows from the lemma below.

\begin{lemm}\label{equivconds}
Eq.~\eqref{part1} $($Eq.~\eqref{part2}, resp.$)$ is equivalent to $c_{33}^3=c_{33}^4$ and Eq.~\eqref{cond1} $($Eq.~\eqref{cond2}, resp.$)$.
\end{lemm}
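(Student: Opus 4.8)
The plan is to translate the combinatorial conditions $c_1=c_2$, $c_3=c_4$, and $c_2=c_3=c_4$ into polynomial identities in the parameters $\alpha,\beta,\gamma,\delta,\varepsilon,\tau$ via the explicit intersection matrices of Lemma~\ref{intermatr}, and then show by algebraic manipulation that these match (after clearing denominators and substituting back $\alpha=n_1+1$, etc.) the pair of conditions ``$c_{33}^3=c_{33}^4$ and Eq.~\eqref{cond1}'' in the first case, and ``$c_{33}^3=c_{33}^4$ and Eq.~\eqref{cond2}'' in the second. First I would record, from the columns of $C_2$, $C_3$, $C_4$, closed-form expressions for each $c_{rt}^{s_j}$ with $r,t\in\{s_2,s_i\}$ and $j\in\{1,2,3,4\}$, hence for $c_j=c_{22}^j+2c_{2i}^j+c_{ii}^j$; the entries with $j\in\{1,2\}$ come from rows indexed by $s_1,s_2$ (which only involve $\alpha,\beta,\gamma,\delta$ and not $\tau$), while those with $j\in\{3,4\}$ involve $\tau$.

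Next I would handle Eq.~\eqref{part2} first, since $c_2=c_3=c_4$ is the ``lower'' parabolic case and I expect it to organize the argument. The key observation is that $c_{33}^3=c_{33}^4$ is, by Lemma~\ref{addcond}, equivalent to the single relation $\tau=\varepsilon\gamma(\alpha-\gamma)/\alpha$, and moreover (again by Lemma~\ref{addcond}) it then forces $c_{43}^3=c_{43}^4$ and $c_{44}^3=c_{44}^4$ simultaneously. Hence, under $c_{33}^3=c_{33}^4$, the equality $c_3=c_4$ is \emph{automatic} for both choices $i=3$ and $i=4$: indeed $c_3-c_4=(c_{22}^3-c_{22}^4)+2(c_{2i}^3-c_{2i}^4)+(c_{ii}^3-c_{ii}^4)$, and each of the three differences vanishes — $c_{22}^3=c_{22}^4=\beta\gamma\cdot(\text{something})$ visibly from row $s_2$ of $C_3,C_4$, $c_{2i}^3-c_{2i}^4$ and $c_{ii}^3-c_{ii}^4$ from the just-cited consequences of Lemma~\ref{addcond}. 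So in the presence of $c_{33}^3=c_{33}^4$, Eq.~\eqref{part1} reduces to the single equation $c_1=c_2$, and Eq.~\eqref{part2} reduces to the single equation $c_2=c_3$. The remaining work is therefore: (a) show $c_1=c_2 \iff$ Eq.~\eqref{cond1}; and (b) show (given $c_{33}^3=c_{33}^4$) that $c_2=c_3 \iff$ Eq.~\eqref{cond2}. For (a), I would compute $c_1-c_2=(c_{22}^1-c_{22}^2)+2(c_{2i}^1-c_{2i}^2)+(c_{ii}^1-c_{ii}^2)$ using rows $s_1,s_2$ of the three matrices; this is independent of $\tau$ and independent of the choice $i\in\{3,4\}$ (consistent with the remark after the theorem that Eq.~\eqref{cond1} does not involve $i$), and after substituting $\alpha,\beta,\gamma,\delta$ in terms of $n_1,n_2,n_3,n_4$ and simplifying I expect the vanishing of $c_1-c_2$ to rearrange exactly into $\frac1{n_3}+\frac1{n_4}=\frac1{n_1}-\frac1{n_1+1}$. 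For (b), I would compute $c_2-c_3$ using row $s_2$ of $C_2$ and rows $s_2,s_i$ of $C_2,C_i$, substitute $\tau=\varepsilon\gamma(\alpha-\gamma)/\alpha$, and check that the result rearranges into $\frac{n_2}{n_1+1}-\frac{2n_i}{n_{7-i}}=1$.

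To finish, I would assemble the cases: $\Gamma$ is a DDG iff (Eq.~\eqref{part1} with canonical partition $V/e_1$) or (Eq.~\eqref{part2} with canonical partition $V/e_0$); since in each case $c_3=c_4$ forces $c_{33}^3=c_{33}^4$ (this is the nontrivial direction — one must check that $c_3=c_4$ alone, not assuming the other equalities of Lemma~\ref{addcond}, already implies $\tau=\varepsilon\gamma(\alpha-\gamma)/\alpha$, which I would get by writing $c_3-c_4$ explicitly and isolating $\tau$), the two cases collapse to ``$c_{33}^3=c_{33}^4$ and Eq.~\eqref{cond1}'' respectively ``$c_{33}^3=c_{33}^4$ and Eq.~\eqref{cond2}'', giving Lemma~\ref{equivconds}. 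The main obstacle I anticipate is purely the bookkeeping of the rational-function algebra: the entries of $C_3,C_4$ have denominators $\alpha$, $\alpha-1$, $\gamma$, so forming $c_1-c_2$, $c_2-c_3$, $c_3-c_4$ and clearing denominators produces sizeable polynomials, and one must verify carefully that the factor that remains after cancellation is precisely the claimed linear relation among the $1/n_j$'s (for Eq.~\eqref{cond1}) and the claimed relation $n_2 n_{7-i}-2n_i(n_1+1)=n_{7-i}(n_1+1)$ (for Eq.~\eqref{cond2}) — in particular one should track which spurious factors (like $\beta$, $\gamma$, $\alpha-\gamma$, $\delta$) are nonzero for a genuine Higmanian scheme so that dividing by them is legitimate. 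A secondary subtlety is making sure the ``only if'' direction does not require separately imposing $c_{33}^3=c_{33}^4$ but rather derives it, and that the subcase $\lambda_1=\lambda_2$ (when both \eqref{part1} and \eqref{part2} hold, giving a $(v,k,\lambda)$-graph) is consistent with both \eqref{cond1} and \eqref{cond2} holding together, as claimed in the theorem.
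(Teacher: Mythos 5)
Your proposal is correct and follows essentially the same route as the paper: compute $c_j=c_{22}^j+2c_{2i}^j+c_{ii}^j$ from the intersection matrices of Lemma~\ref{intermatr}, use Lemma~\ref{addcond} to identify $c_3=c_4$ with $c_{33}^3=c_{33}^4$ (equivalently $\tau=\varepsilon\gamma(\alpha-\gamma)/\alpha$), and reduce $c_1=c_2$ and $c_2=c_3$ (modulo that relation) to Eqs.~\eqref{cond1} and~\eqref{cond2} by substituting the $n_j$-expressions for $\alpha,\beta,\gamma,\delta,\varepsilon$. Your observation that $c_3-c_4$ collapses to $c_{ii}^3-c_{ii}^4$ because the $c_{22}^j$ and $c_{2i}^j$ differences vanish identically is a slight streamlining of the paper's explicit comparison of $c_3$ and $c_4$ (note only the indexing slip: the entries $\beta\gamma$ in row $s_2$ of $C_3$ are $c_{23}^3=c_{23}^4$, while $c_{22}^3=c_{22}^4=0$ is read from $C_2$), and the remaining algebra is the same verification the paper performs.
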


\begin{proof}
We divide the proof into two cases depending on~$i$.

\hspace{5mm}

\noindent\textbf{Case~1: $i=3$.} In this case, one can compute using Lemma~\ref{intermatr} and Eq.~\eqref{neighbors} that
\begin{equation}\label{c1234}
\begin{split}
c_1=\alpha\beta+\frac{\delta\gamma(\gamma-1)}{\alpha-1},~c_2=\alpha(\beta-1)+\frac{\delta\gamma^2}{\alpha},\\
c_3=2\beta\gamma+2\varepsilon\gamma-\tau-\frac{\alpha(\varepsilon\gamma-\tau)}{\gamma},~c_4=2\beta\gamma+\varepsilon\gamma-\tau.
\end{split}
\end{equation}
Both of Eqs.~\eqref{part1} and~\eqref{part2} require $c_3=c_4$. Using the expressions from Eq.~\eqref{c1234} for $c_3$ and~$c_4$, one can verify that $c_3=c_4$ is equivalent to Eq.~\eqref{tau} and hence to $c_{33}^3=c_{33}^4$ by Lemma~\ref{addcond}. 

Due to Eq.~\eqref{c1234}, the equality $c_1=c_2$ from Eq.~\eqref{part1} is equivalent to
\begin{equation}\label{c1c2}
\alpha^2(\alpha-1)=\delta\gamma(\alpha-\gamma).
\end{equation}
Substituting the expressions for $\alpha$, $\gamma$, and $\delta$ from Lemma~\ref{intermatr} to the above equality and reducing it, we obtain Eq.~\eqref{cond1}. Therefore the equality $c_1=c_2$ is equivalent to Eq.~\eqref{cond1}. Thus, Eq.~\eqref{part1} is equivalent to $c_{33}^3=c_{33}^4$ and Eq.~\eqref{cond1}. 

In view of Eq.~\eqref{c1234}, the equality $c_2=c_4$ from Eq.~\eqref{part2} is equivalent modulo Eq.~\eqref{tau} to 
$$\alpha(\beta-1)+\frac{\delta\gamma^2}{\alpha}=2\beta\gamma+\varepsilon\gamma-\frac{\varepsilon\gamma(\alpha-\gamma)}{\alpha}.$$
Substituting the expressions for $\alpha$, $\beta$, $\gamma$, $\delta$, and $\varepsilon$ from Lemma~\ref{intermatr} to the above equality and reducing it, we obtain Eq.~\eqref{cond2} for $i=3$. Thus, Eq.~\eqref{part2} is equivalent to $c_{33}^3=c_{33}^4$ and Eq.~\eqref{cond2} for $i=3$.

\hspace{5mm}

\noindent\textbf{Case~2: $i=4$.} Lemma~\ref{intermatr} and Eq.~\eqref{neighbors} yield that
\begin{equation}\label{c1234c}
\begin{split}
c_1=\alpha\beta+\frac{\delta(\alpha-\gamma)(\alpha-\gamma-1)}{\alpha-1},~c_2=\alpha(\beta-1)+\frac{\delta(\alpha-\gamma)^2}{\alpha},\\
c_3=2\beta(\alpha-\gamma)+\frac{\tau(\alpha-\gamma)}{\gamma},~c_4=2\beta(\alpha-\gamma)+\varepsilon(\alpha-\gamma)-\tau.
\end{split}
\end{equation}
This case is similar to the previous one. The only difference is in the usage of Eq.~\eqref{c1234c} instead of Eq.~\eqref{c1234}. One can check using the above expressions for $c_3$ and $c_4$ that $c_3=c_4$ is equivalent to Eq.~\eqref{tau} and hence to $c_{33}^3=c_{33}^4$ by Lemma~\ref{addcond}. Further, the equality $c_1=c_2$ from Eq.~\eqref{part1} is equivalent to Eq.~\eqref{c1c2} and consequently to Eq.~\eqref{cond1}. Thus, Eq.~\eqref{part1} is equivalent to $c_{33}^3=c_{33}^4$ and Eq.~\eqref{cond1}. 

Due to Eq.~\eqref{c1234c}, the equality $c_2=c_4$ from Eq.~\eqref{part2} is equivalent modulo Eq.~\eqref{tau} to 
$$\alpha(\beta-1)+\frac{\delta(\alpha-\gamma)^2}{\alpha}=2\beta(\alpha-\gamma)+\varepsilon(\alpha-\gamma)-\frac{\varepsilon\gamma(\alpha-\gamma)}{\alpha}.$$
Substituting the expressions for $\alpha$, $\beta$, $\gamma$, $\delta$, and $\varepsilon$ from Lemma~\ref{intermatr} to the above equality and reducing it, we obtain Eq.~\eqref{cond2} for $i=4$. Thus, Eq.~\eqref{part2} is equivalent to $c_{33}^3=c_{33}^4$ and Eq.~\eqref{cond2} for $i=4$.
\end{proof}
\end{proof}



\begin{lemm}\label{parameters}
In the above notation, let $\Gamma_i=(V,s_2\cup s_i)$, $i\in\{3,4\}$. Suppose that $c_{33}^3=c_{44}^3$.
\begin{enumerate}

\tm{1} If $\mathcal{X}$ satisfies Eq.~\eqref{cond1}, then $\Gamma_i$, $i\in \{3,4\}$, is a divisible design graph with parameters
$$v=1+n_1+n_2+n_3+n_4,~k=n_2+n_i,$$
$$\lambda_1=n_2+\frac{n_i(n_1n_i-n_{7-i})}{n_1(n_3+n_4)},\lambda_2=\frac{2n_2n_i}{n_3+n_4}+\frac{n_i^2(n_3+n_4-n_2-n_1-1)}{(n_3+n_4)^2},$$
$$m=1+\frac{n_3+n_4}{1+n_1+n_2},~n=1+n_1+n_2.$$

\tm{2} If $\mathcal{X}$ satisfies Eq.~\eqref{cond2} for $i\in \{3,4\}$, then $\Gamma_i$ is a divisible design graph with parameters
$$v=1+n_1+n_2+n_3+n_4,~k=n_2+n_i,$$
$$\lambda_1=n_2+\frac{n_i(n_1n_i-n_{7-i})}{n_1(n_3+n_4)},\lambda_2=n_2-n_1-1+\frac{n_i^2}{n_3+n_4},$$
$$m=1+\frac{n_2+n_3+n_4}{1+n_1},~n=1+n_1.$$
\end{enumerate}
\end{lemm}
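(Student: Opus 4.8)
The plan is to read the six parameters off one at a time, using that Theorem~\ref{main} has already done the hard work of proving that $\Gamma_i$ is a divisible design graph and identifying its canonical partition. The values of $v$ and $k$ are immediate: $v=|V|=\sum_{j=0}^{4}n_j=1+n_1+n_2+n_3+n_4$, and $k$ equals the valency of the connection relation $E=s_2\cup s_i$, so $k=n_2+n_i$. For $m$ and $n$ I would invoke that, by Theorem~\ref{main}, the canonical partition consists of the classes of $e_1=s_0\cup s_1\cup s_2$ when Eq.~\eqref{cond1} holds and of the classes of $e_0=s_0\cup s_1$ when Eq.~\eqref{cond2} holds; since each class of a parabolic $e$ has size $n_e$ and there are $v/n_e$ of them, and $n_{e_1}=1+n_1+n_2$, $n_{e_0}=1+n_1$, this yields $n=1+n_1+n_2$, $m=1+(n_3+n_4)/(1+n_1+n_2)$ in case~(1) and $n=1+n_1$, $m=1+(n_2+n_3+n_4)/(1+n_1)$ in case~(2).

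For $\lambda_1$ and $\lambda_2$ I would reuse the quantities $c_j=c_{22}^j+2c_{2i}^j+c_{ii}^j$ of Eq.~\eqref{neighbors}, which count the common neighbours of two vertices joined by $s_j$, together with their explicit forms in $\alpha,\beta,\gamma,\delta,\varepsilon,\tau$ recorded in Eq.~\eqref{c1234} (for $i=3$) and Eq.~\eqref{c1234c} (for $i=4$), and with the reduction $\tau=\varepsilon\gamma(\alpha-\gamma)/\alpha$, which holds under the hypothesis of the lemma by Lemma~\ref{addcond}. When the canonical partition is by $e_1$ (Eq.~\eqref{cond1}) one has $\lambda_1=c_1=c_2$ and $\lambda_2=c_3=c_4$, and when it is by $e_0$ (Eq.~\eqref{cond2}) one has $\lambda_1=c_1$ and $\lambda_2=c_2=c_3=c_4$; thus in both cases $\lambda_1=c_1$, while $\lambda_2$ may be taken to be $c_4$ in the first situation (after substituting the value of $\tau$) and $c_2$ in the second. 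It then remains to substitute $\alpha=n_1+1$, $\beta=n_2/(n_1+1)$, $\gamma=n_3(n_1+1)/(n_3+n_4)$, $\delta=(n_3+n_4)/(n_1+1)$, $\varepsilon=\delta-\beta-1$ from Lemma~\ref{intermatr} and simplify. The bookkeeping is eased by the collapses $\alpha\beta=n_2$, $\delta\gamma=n_3$, $\delta(\alpha-\gamma)=n_4$, $\beta\gamma=n_2n_3/(n_3+n_4)$: from these $c_1$ reduces to $n_2+n_i(n_1n_i-n_{7-i})/(n_1(n_3+n_4))$ for both $i$, giving the stated $\lambda_1$; and similarly $c_4$ reduces to $2n_2n_i/(n_3+n_4)+n_i^2(n_3+n_4-n_2-n_1-1)/(n_3+n_4)^2$ while $c_2$ reduces to $n_2-n_1-1+n_i^2/(n_3+n_4)$, giving the two expressions for $\lambda_2$.

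No step is a genuine obstacle, since the lemma is ultimately a routine rational-function computation on top of Theorem~\ref{main}; the one thing requiring care is to run the cases $i=3$ and $i=4$ in parallel, keeping the symmetry $n_3\leftrightarrow n_4$, $i\leftrightarrow 7-i$ (already visible in $\delta\gamma=n_3$ versus $\delta(\alpha-\gamma)=n_4$) manifest throughout, so that $k$, $\lambda_1$, and $\lambda_2$ emerge in a single form valid for both choices of $i$ rather than having to be patched together from two separate calculations.
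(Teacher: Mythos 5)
Your proposal is correct and follows essentially the same route as the paper's proof: invoke Theorem~\ref{main} for the DDG property and the canonical partition (giving $v$, $k$, $m$, $n$), then compute $\lambda_1$ and $\lambda_2$ from the quantities $c_j$ of Eq.~\eqref{neighbors} via Eqs.~\eqref{c1234} and~\eqref{c1234c}, eliminating $\tau$ through Lemma~\ref{addcond} and substituting the parametrization of Lemma~\ref{intermatr}. The only addition you make is recording the useful collapses $\alpha\beta=n_2$, $\delta\gamma=n_3$, $\delta(\alpha-\gamma)=n_4$, which the paper leaves implicit in its ``substituting and reducing'' step.
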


\begin{proof}
The graph $\Gamma_i$ is a DDG by Theorem~\ref{main}. The parameters $v$ and $k$ are clear. If $\mathcal{X}$ satisfies Eq.~\eqref{cond1}, then Eq.~\eqref{part1} holds by Lemma~\ref{equivconds}. So the canonical partition of $\Gamma$ is the partition into the classes of~$e_1$. Therefore the parameter $n$ is equal to~$n_{e_1}=1+n_1+n_2$ and hence $m=v/n=1+\frac{n_3+n_4}{1+n_1+n_2}$. Due to Eq.~\eqref{part1}, we have $\lambda_1=c_1=c_2$ and $\lambda_2=c_3=c_4$. Since $c_{33}^3=c_{44}^3$, Lemma~\ref{intermatr} and Lemma~\ref{addcond} imply that one can express $\tau$ via $n_i$'s. Substituting this expression and the expressions for $\alpha$, $\beta$, $\gamma$, $\delta$ via $n_i$'s from Lemma~\ref{intermatr} to Eq.~\eqref{c1234} if $i=3$ and to Eq.~\eqref{c1234c} if $i=4$ and using Eq.~\eqref{part1}, we obtain the required expressions for $\lambda_1$ and $\lambda_2$ from Statement~$(1)$ of the lemma.

Similarly, if $\mathcal{X}$ satisfies Eq.~\eqref{cond2}, then Eq.~\eqref{part2} holds by Lemma~\ref{equivconds}. In this case, the canonical partition of $\Gamma$ is the partition into the classes of~$e_0$. Therefore $n=n_{e_0}=1+n_1$ and hence $m=v/n=1+\frac{n_2+n_3+n_4}{1+n_1}$. From Eq.~\eqref{part2} it follows that $\lambda_1=c_1$ and $\lambda_2=c_2=c_3=c_4$. Again, one can express $\alpha$, $\beta$, $\gamma$, $\delta$, and $\tau$ via $n_i$'s by Lemma~\ref{intermatr} and Lemma~\ref{addcond}. A substitution of these expressions to Eq.~\eqref{c1234} if $i=3$ and to Eq.~\eqref{c1234c} if $i=4$ and a reduction of the obtained equalities using Eq.~\eqref{part1} lead to the desired formulas for $\lambda_1$ and $\lambda_2$ from Statement~$(2)$ of the lemma.
\end{proof}

\section{DDGs from $\mathcal{K}_1$}

The family of Cayley DDGs considered in this section slightly generalizes the family of Cayley DDGs from~\cite[Theorem~3.1]{KS}. Firstly, let us construct a Higmanian $S$-ring. Let $q$ be a prime power, $r\geq 2$, and $\mathbb{F}=\mathbb{F}_{q^r}$ the field of order $q^r$. Denote the additive and multiplicative groups of $\mathbb{F}$ by $\mathbb{F}^+$ and $\mathbb{F}^\times$, respectively. Let $\sigma$ be a generator of $\mathbb{F}^\times$ and $\tau=\sigma^{q-1}$. Put $A=\mathbb{F}^+$, $G=A\rtimes \langle \tau \rangle$, where $\tau$ acts on $A$ by the multiplications, and $l=(q^r-1)/(q-1)$. Clearly, $|\tau|=l$ and $|G|=q^rl=q^r(q^r-1)/(q-1)$. 

The group $A$ has exactly $l$ maximal subgroups; each of them is of order $q^{r-1}$. Due to~\cite[Lemma~2.2]{KS}, there exists an ordering $A_0,A_1,\ldots,A_{l-1}$ of the set of all maximal subgroups of $A$ such that the set
$$S=\bigcup_{i=0}^{l-1} (A\setminus A_i)\tau^i$$
is inverse-closed. Let us define a partition of $G$ into the following subsets:
$$X_0=\{1_G\},~X_1=A_0^\#,~X_2=A\setminus A_0,~X_3=\bigcup_{i=1}^{l-1} A_i\tau^i,~X_4=\bigcup_{i=1}^{l-1} (A\setminus A_i)\tau^i=S\setminus X_2.$$

\begin{lemm}\label{ks0}
The $\mathbb{Z}$-module $\mathcal{A}=\Span_{\mathbb{Z}}\{\underline{X_i}:~i\in\{0,\ldots,4\}\}$ is a Higmanian $S$-ring over~$G$.
\end{lemm}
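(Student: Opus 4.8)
The plan is to verify the four conditions of Lemma~\ref{ringscheme} (equivalently, Lemma~\ref{algebra} applied to the Cayley setting), i.e. to show that $\mathcal{A}$ is an $S$-ring over $G$ whose basic sets are exactly $X_0,\ldots,X_4$ and that it satisfies the Higmanian conditions. First I would check that $\mathcal{S}=\{X_0,X_1,X_2,X_3,X_4\}$ is a partition of $G$: the four pieces $X_0,X_1,X_2$ cover $A$ (with $X_1\cup X_2 = A^\#$, $X_2 = A\setminus A_0$, $X_1 = A_0^\#$), while $X_3\cup X_4$ covers $G\setminus A = \bigcup_{i=1}^{l-1}A\tau^i$ since for each $i\geq 1$ the coset $A\tau^i$ splits as $A_i\tau^i \sqcup (A\setminus A_i)\tau^i$. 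Next, inverse-closedness: $X_0^{-1}=X_0$ and $X_1^{-1}=X_1$, $X_2^{-1}=X_2$ are immediate because $A_0$ (hence $A_0^\#$ and $A\setminus A_0$) is a subgroup; $X_4^{-1}=X_4$ is exactly the content of the cited~\cite[Lemma~2.2]{KS} that makes $S=X_2\cup X_4$ inverse-closed, combined with $X_2^{-1}=X_2$; and then $X_3^{-1}=(G\setminus A\setminus X_4)^{-1}=X_3$ follows. Since $\mathcal{A}$ is symmetric, Condition~$(2)$ of the Higmanian definition will hold for free once we know $\mathcal{A}$ is an $S$-ring.

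The heart of the argument is showing that $\Span_{\mathbb Z}\{\underline{X_i}\}$ is closed under multiplication, equivalently that each product $\underline{X_i}\cdot\underline{X_j}$ lies in the span. I would organize this by the natural grading coming from the quotient $G/A\cong \langle\tau\rangle$: $X_0\cup X_1\cup X_2$ sits over the identity coset and $X_3\cup X_4$ over $A\tau\cup\cdots\cup A\tau^{l-1}$, so $A_1 = X_0\cup X_1\cup X_2 = \underline{A}$-supported and $A_0 = X_0\cup X_1 = \underline{A_0}$-supported are candidate $\mathcal{A}$-subgroups. For products within $A$ (i.e. among $\underline{X_0},\underline{X_1},\underline{X_2}$) one uses that $\underline{A_0}\cdot\underline{A_0}=q^{r-1}\underline{A_0}$, $\underline{A_0}\cdot\underline{A}=q^{r-1}\underline{A}$, $\underline{A}\cdot\underline{A}=q^r\underline{A}$, so this block is just the rank-$3$ $S$-ring of the subgroup chain $\{1\}<A_0<A$ and closure is automatic. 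For the mixed products one needs the key combinatorial input that the $A_i$'s ($i\geq 1$), together with $A_0$, behave symmetrically enough: concretely, one should establish identities like $\underline{A_0}\cdot\underline{X_3} = q^{r-1}\underline{X_3}$ (since $A_0$ centralizes... no—rather since $A_0$ acts on each $A_i\tau^i$ by translation within the coset and $A_0\cap A_i$ has index controlled), and compute $\underline{X_3}\cdot\underline{X_3}$, $\underline{X_3}\cdot\underline{X_4}$, $\underline{X_4}\cdot\underline{X_4}$ landing in $\Span\{\underline{X_0},\ldots,\underline{X_4}\}$. Here the relation $\tau A_i\tau^{-1}=A_{i'}$ for an appropriate permutation of indices (coming from $\tau$ permuting the maximal subgroups of $A$, which is transitive or has controllable orbits) is what forces the structure constants to be coset-independent. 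The cleanest route is probably to exhibit $\mathcal{A}$ as a known construction — a generalized wreath-type or ``tensor-like'' product, or to invoke that $\mathcal{A}$ is the $S$-ring of an explicit association scheme (a one-class amalgam / a scheme from a resolvable transversal design on the $A_i$'s) — and cite closure from there; failing that, a direct but careful computation using~\eqref{easy} and the transitivity of $\langle\tau\rangle$ on $\{A_0,\ldots,A_{l-1}\}$ will do.

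Once closure is established, $\mathcal{S}(\mathcal{A})=\{X_0,\ldots,X_4\}$ so $\rk(\mathcal{A})=5$, giving Condition~$(1)$; symmetry gives Condition~$(2)$; $A_0=X_0\cup X_1$ and $A_1=X_0\cup X_1\cup X_2=A$ are genuine subgroups of $G$, giving Condition~$(3)$ with the required $\mathcal{A}$-subgroup structure. For Condition~$(4)$ — that $\rad(X_3)=\rad(X_4)=\{1_G\}$ — I would argue directly: if $g\in\rad(X_4)$ then $gX_4=X_4$, and writing $g=a\tau^k$ and looking at the coset $A\tau^1$ forces $a\tau^k \cdot (A\setminus A_1)\tau = (A\setminus A_{1'})\tau^{1+k}$ to be a piece of $X_4$, which by comparing sizes and using that distinct $A_i$ are genuinely distinct pins down $k=0$ and then $a(A\setminus A_1)=A\setminus A_1$, i.e. $a\in\rad(A\setminus A_1)=A_1$; running the same comparison over all cosets forces $a\in\bigcap_{i\geq 1}A_i$, which is trivial since the $A_i$, $i\geq 1$, have trivial intersection (any $l-1\geq q^{r-1}>1$... more precisely, at least two distinct maximal subgroups of the elementary-abelian-like group $A$ already intersect in index-$q$, and enough of them intersect trivially). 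The analogous argument handles $X_3$. The main obstacle I anticipate is precisely the closure verification in the mixed block — making rigorous that $\langle\tau\rangle$ permutes $\{A_0,A_1,\ldots,A_{l-1}\}$ in a way that renders all the structure constants $c^{X_k}_{X_iX_j}$ independent of the representative; the cleanest fix is to note that $\tau$ acts on the projective point set $\mathrm{PG}(r-1,q)$ (indexing the maximal subgroups of $A=\mathbb{F}^+$) as a Singer cycle, hence sharply transitively on the $l$ hyperplanes, so the configuration $(A;A_0,\ldots,A_{l-1})$ is highly homogeneous and the $S$-ring is a Cayley $S$-ring of a known ``affine'' scheme whose adjacency algebra is visibly $5$-dimensional and closed.
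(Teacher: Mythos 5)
Your outline handles the routine parts correctly (the partition of $G$, inverse-closedness via \cite[Lemma~2.2]{KS}, the products inside $A$, and, modulo some hand-waving, the triviality of the radicals), but it leaves the genuinely hard step unproved and proposes an incorrect mechanism for it. The crux is showing $\underline{X_3}^2,\ \underline{X_3}\cdot\underline{X_4},\ \underline{X_4}^2\in\mathcal{A}$. You propose to derive this from ``the transitivity of $\langle\tau\rangle$ on $\{A_0,\ldots,A_{l-1}\}$'', asserting that $\tau$ acts sharply transitively on the $l$ hyperplanes of $A$. That is false in general: the stabilizer of a hyperplane under the action of $\mathbb{F}^\times$ is $\mathbb{F}_q^\times$, and $\langle\tau\rangle\cap\mathbb{F}_q^\times$ has order $\gcd(q-1,l)$, which equals $2$ already for $q=3$, $r=2$ (a case the paper explicitly uses), so $\langle\tau\rangle$ is not even transitive on the hyperplanes there. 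More importantly, even regularity would not suffice: whether the structure constants of $\underline{X_4}^2$ are well defined depends on a compatibility between the chosen ordering $A_0,\ldots,A_{l-1}$ and the powers $\tau^i$, not merely on homogeneity of the hyperplane configuration. The input that actually closes the algebra is that $S=\bigcup_i(A\setminus A_i)\tau^i$ is a divisible difference set relative to $A$, i.e. $\underline{S}^2=|S|e+\lambda_1\underline{A}^{\#}+\lambda_2(\underline{G}-\underline{A})$; this is \cite[Theorem~3.1]{KS} and is exactly what the paper cites. From it the paper gets $\underline{X_4}^2=(\underline{S}-\underline{X_2})^2=\underline{S}^2-\underline{X_2}^2-2\,\underline{X_2}\cdot\underline{X_4}\in\mathcal{A}$, and then $\underline{X_3}=\underline{G}^{\#}-\underline{X_1}-\underline{X_2}-\underline{X_4}$ disposes of the remaining products. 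Your proposal never identifies this DDS property, so the closure argument is a gap, not a proof.

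Two smaller points. The sample identity you guess, $\underline{A_0}\cdot\underline{X_3}=q^{r-1}\,\underline{X_3}$, is wrong: since $\underline{A_0}\cdot\underline{A_i}=q^{r-2}\underline{A}$ for $i\neq 0$, one gets $\underline{A_0}\cdot\underline{X_3}=q^{r-2}(\underline{X_3}+\underline{X_4})$, so $A_0$ does not stabilize $X_3$ in the way you suggest; this non-stabilization is in fact how the paper proves Condition~(4), by noting that any nontrivial $\mathcal{A}$-subgroup contains $A_0$ and that $\rad(X_4)\geq A_0$ would contradict Eq.~\eqref{easy} together with the computed value of $\underline{X_1}\cdot\underline{X_4}$ --- a cleaner route than your coset-by-coset analysis of $gX_4=X_4$. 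Finally, your fallback of ``exhibiting $\mathcal{A}$ as a known scheme and citing closure from there'' is a wish rather than an argument; the known result that does the job is precisely \cite[Theorem~3.1]{KS}.
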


\begin{proof}
At first, let us verify that $\mathcal{A}$ is an $S$-ring. Clearly, $X_i=X_i^{-1}$ for $i\in\{0,1,2\}$. Since $S=S^{-1}$, we conclude that $X_4=X_4^{-1}$. Together with $X_3=G\setminus (A\cup X_4)$, this implies that $X_3=X_3^{-1}$. A straightforward computation using Eq.~\eqref{easy} in the group ring $\mathbb{Z}G$ implies that
$$\underline{X_1}^2=(q^{r-1}-1)e+(q^{r-1}-2)\underline{X_1},$$
\begin{equation}\label{x2}
\underline{X_2}^2=(q^r-q^{r-1})e+(q^r-q^{r-1})\underline{X_1}+(q^r-2 q^{r-1})\underline{X_2},
\end{equation}
$$\underline{X_1}\cdot\underline{X_2}=\underline{X_2}\cdot\underline{X_1}=(q^{r-1}-1)\underline{X_2}.$$

Since $\underline{X_3}=\underline{G}^\#-\underline{X_1}-\underline{X_2}-\underline{X_4}$, it remains to verify that $\underline{X_4}\cdot\underline{X_i},\underline{X_i}\cdot\underline{X_4}\in \mathcal{A}$, where $i\in\{1,2,4\}$. Observe that 
$$\underline{A_i}\cdot \underline{A_j}=q^{r-2}\underline{A}$$
whenever $i\neq j$ because $A_i$ and $A_j$ are maximal subgroups of $A$. Using this observation, one can compute that 
\begin{equation}\label{x4}
\underline{X_1}\cdot\underline{X_4}=\underline{X_4}\cdot\underline{X_1}=(q^{r-1}-q^{r-2})\underline{X_3}+(q^{r-1}-q^{r-2}-1)\underline{X_4},
\end{equation}
and
\begin{equation}\label{x2x4}
\underline{X_2}\cdot\underline{X_4}=\underline{X_4}\cdot\underline{X_2}=q^{r-2}(q-1)^2(\underline{X_3}+\underline{X_4}).
\end{equation}
From~\cite[Theorem~3.1]{KS} it follows that 
$$\underline{S}^2=|S|e+\lambda_1\underline{A}^\#+\lambda_2(\underline{G}-\underline{A})=|S|e+\lambda_1(\underline{X_1}+\underline{X_2})+\lambda_2(\underline{X_3}+\underline{X_4})$$
for some positive integers $\lambda_1$ and $\lambda_2$. So $\underline{S}^2\in \mathcal{A}$. Further, one can compute $\underline{X_4}^2$ in the following way:
\begin{equation}\label{x4square}
\begin{split}
\underline{X_4}^2=(\underline{S}-\underline{X_2})^2=\underline{S}^2-\underline{S}\cdot \underline{X_2}-\underline{X_2}\cdot\underline{S}+\underline{X_2}^2=\underline{S}^2-\underline{X_2}^2-2\underline{X_2}\cdot \underline{X_4}=\\
=|X_4|e+(\lambda_1-q^{r-1}(q-1))\underline{X_1}+(\lambda_1-q^{r-1}(q-2))\underline{X_2}+(\lambda_2-2q^{r-2}(q-1)^2)(\underline{X_3}+\underline{X_4}),
\end{split}
\end{equation}
where the first and third equalities follow from $S=X_2\cup X_4$, whereas the fourth one from Eqs.~\eqref{x2} and~\eqref{x2x4}. Therefore $\underline{X_4}^2\in \mathcal{A}$. Thus, $\mathcal{A}$ is an $S$-ring.  

Now let us show that $\mathcal{A}$ is Higmanian. Clearly, $\rk(\mathcal{A})=5$. One can see that $\mathcal{A}$ is symmetric because $X_i=X_i^{-1}$ for every $i\in\{0,\ldots,4\}$. The groups $A_0=X_0\cup X_1$ and $A_1=X_0\cup X_1 \cup X_2$ are $\mathcal{A}$-subgroups. Since $\langle X_3 \rangle=\langle X_4 \rangle=G$, the groups $A_0$ and $A_1$ are the only proper nontrivial $\mathcal{A}$-subgroups. Assume that $|\rad(X_4)|>1$. Then $\rad(X_4)\geq A_0$ because $A_0$ is the least nontrivial $\mathcal{A}$-subgroup. We obtain a contradiction to Eqs.~\eqref{easy} and~\eqref{x4}. Therefore $|\rad(X_4)|=1$. As $X_3=G\setminus (A\cup X_4)$, we conclude that $|\rad(X_3)|=1$. Thus, all the conditions from the definition of a Higmanian $S$-ring are satisfied and consequently $\mathcal{A}$ is a Higmanian $S$-ring. \end{proof}

The scheme $\mathcal{X}=\mathcal{X}(\mathcal{A})$ is a Higmanian scheme by Lemma~\ref{ringscheme}. Put $s_i=s(X_i)$, $n_i=n_{s_i}$, and $c_{ij}^k=c_{s_is_j}^{s_k}$, $i,j,k\in\{0,\ldots,4\}$. Since $s_0=\textbf{1}_G$, $e_0=s(A_0)=s_0\cup s_1$ and $e_1=s(A_1)=s_0\cup s_1\cup s_2$ are nontrivial parabolics of $\mathcal{X}$, and
$$n_3=|X_3|=q^{r-1}(l-1)\leq q^{r-1}(q-1)(l-1)=|X_4|=n_4,$$
the ordering $S=\{s_0,\ldots,s_4\}$ of $S$ is standard.

\begin{lemm}\label{ks1}
In the above notation, the following statements hold.
\begin{enumerate}

\tm{1} $c_{33}^3=c_{33}^4$.

\tm{2} $\mathcal{X}$ satisfies Eq.~\eqref{cond1}.

\tm{3} $\mathcal{X}$ satisfies Eq.~\eqref{cond2} for $i=3$ if and only if $q=3$.

\tm{4} $\mathcal{X}$ does not satisfy Eq.~\eqref{cond2} for $i=4$.

\end{enumerate}

\end{lemm}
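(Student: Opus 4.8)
The plan is to verify Lemma~\ref{ks1} by computing the relevant valencies explicitly and then plugging them into the conditions from Theorem~\ref{main}. First I would record the valencies of the basis relations in terms of $q$, $r$, and $l=(q^r-1)/(q-1)$: from the definitions of the $X_i$ we read off $n_1=|X_1|=q^{r-1}-1$, $n_2=|X_2|=q^r-q^{r-1}=q^{r-1}(q-1)$, $n_3=|X_3|=q^{r-1}(l-1)$, and $n_4=|X_4|=q^{r-1}(q-1)(l-1)$. Note the convenient identities $n_1+1=q^{r-1}$, $n_3+n_4=q^{r-1}q(l-1)$ (since $1+(q-1)=q$), and $\beta=n_2/(n_1+1)=q-1$.

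For Statement~(1), I would compute $c_{33}^3$ and $c_{33}^4$ directly from the product $\underline{X_3}^2$. Rather than expanding $\underline{X_3}^2$ from scratch, the cleanest route is to use $\underline{X_3}=\underline{S}-\underline{X_4}$ together with $\underline{X_3}=\underline{G}^\#-\underline{X_1}-\underline{X_2}-\underline{X_4}$ and the already-established products in Eqs.~\eqref{x2}, \eqref{x4}, \eqref{x2x4}, \eqref{x4square}, plus $\underline{S}^2$ from~\cite[Theorem~3.1]{KS}; alternatively invoke Lemma~\ref{addcond}, which says $c_{33}^3=c_{33}^4$ is equivalent to $c_{44}^3=c_{44}^4$ (and to Eq.~\eqref{tau}), and the coefficients of $\underline{X_3}$ and $\underline{X_4}$ in $\underline{X_4}^2$ are manifestly equal from Eq.~\eqref{x4square}. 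That last observation is essentially immediate: the expression in Eq.~\eqref{x4square} has $(\lambda_2-2q^{r-2}(q-1)^2)(\underline{X_3}+\underline{X_4})$, so $c_{44}^3=c_{44}^4$, hence by Lemma~\ref{addcond} also $c_{33}^3=c_{33}^4$. This is the slickest way to get Statement~(1) for free.

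For Statements~(2)--(4) I would just substitute the valencies into Eqs.~\eqref{cond1} and~\eqref{cond2}. For~\eqref{cond1}: the right-hand side is $\frac{1}{n_1}-\frac{1}{n_1+1}=\frac{1}{n_1(n_1+1)}=\frac{1}{(q^{r-1}-1)q^{r-1}}$, while $\frac{1}{n_3}+\frac{1}{n_4}=\frac{1}{q^{r-1}(l-1)}\bigl(1+\frac{1}{q-1}\bigr)=\frac{q}{q^{r-1}(l-1)(q-1)}$; using $l-1=(q^r-1)/(q-1)-1=q(q^{r-1}-1)/(q-1)$ one checks $(l-1)(q-1)=q(q^{r-1}-1)$, so the left side becomes $\frac{q}{q^{r-1}q(q^{r-1}-1)}=\frac{1}{q^{r-1}(q^{r-1}-1)}$, matching the right side. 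So~\eqref{cond1} always holds, giving Statement~(2). For~\eqref{cond2} with $i=3$: the left side is $\frac{n_2}{n_1+1}-\frac{2n_3}{n_4}=(q-1)-\frac{2}{q-1}$, and setting this equal to $1$ gives $(q-1)^2-2=q-1$, i.e. $(q-1)^2-(q-1)-2=0$, i.e. $(q-1-2)(q-1+1)=0$, so $q=3$ (as $q\geq 2$), which is Statement~(3). For~\eqref{cond2} with $i=4$: the left side is $(q-1)-\frac{2n_4}{n_3}=(q-1)-2(q-1)=-(q-1)<0$, which can never equal $1$, giving Statement~(4).

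The computations here are entirely routine once the four valencies and the identity $(l-1)(q-1)=q(q^{r-1}-1)$ are in hand; there is no real obstacle. The only mild subtlety is making sure the ``for free'' argument for Statement~(1) via Eq.~\eqref{x4square} and Lemma~\ref{addcond} is airtight — one should double-check that Lemma~\ref{addcond} applies verbatim (it is stated for the standard ordering, which we have verified holds here) and that reading off $c_{44}^3=c_{44}^4$ from Eq.~\eqref{x4square} is legitimate, i.e. that $\underline{X_3}$ and $\underline{X_4}$ genuinely appear with a common coefficient there and are distinct basis elements. If one prefers to avoid relying on the somewhat opaque $\lambda_1,\lambda_2$ from~\cite{KS}, an alternative is to compute $c_{33}^3$ and $c_{33}^4$ directly by counting, in the group $G=A\rtimes\langle\tau\rangle$, the number of ways to write a fixed element of $A_j\tau^j$ (resp. $(A\setminus A_j)\tau^j$) as a product of two elements from $X_3=\bigcup_{i\geq 1}A_i\tau^i$, using that $A_a\tau^a\cdot A_b\tau^b\subseteq (A_a\cap A_b^{\tau^a})\tau^{a+b}$-type cosets and the maximal-subgroup intersection counts; but invoking Lemma~\ref{addcond} is shorter and is the route I would take.
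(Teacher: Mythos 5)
Your proposal is correct and follows essentially the same route as the paper: Statement (1) is obtained by reading $c_{44}^3=c_{44}^4$ off the common coefficient of $\underline{X_3}+\underline{X_4}$ in Eq.~\eqref{x4square} and invoking Lemma~\ref{addcond}, and Statements (2)--(4) follow by substituting the same valencies $n_1=q^{r-1}-1$, $n_2=q^{r-1}(q-1)$, $n_3=q^{r-1}(l-1)$, $n_4=q^{r-1}(q-1)(l-1)$ into Eqs.~\eqref{cond1} and~\eqref{cond2}. All computations check out, including the reduction of Eq.~\eqref{cond2} to $(q-1)-\tfrac{2}{q-1}=1$ for $i=3$ and to $(q-1)-2(q-1)=1$ for $i=4$.
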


\begin{proof}
The equality $c_{33}^3=c_{33}^4$ is equivalent to~$c_{44}^3=c_{44}^4$ by Lemma~\ref{addcond}. Clearly, the latter one is equivalent to $c_{X_4X_4}^{X_3}=c_{X_4X_4}^{X_4}$ which immediately follows from Eq.~\eqref{x4square}. Thus, Statement~$(1)$ of the lemma holds.

One can easily compute that
$$n_1=|X_1|=q^{r-1}-1,~n_2=|X_2|=q^{r-1}(q-1),$$
$$n_3=|X_3|=q^{r-1}(l-1)=q^{r-1}\left(\frac{q^r-1}{q-1}-1\right),$$
$$n_4=|X_4|=q^{r-1}(q-1)(l-1)=q^{r-1}(q-1)\left(\frac{q^r-1}{q-1}-1\right).$$
Further,
$$\frac{1}{n_3}+\frac{1}{n_4}=\frac{1}{q^{r-1}(q^{r-1}-1)}=\frac{1}{n_1}-\frac{1}{n_1+1}$$
and hence $\mathcal{X}$ satisfies Eq.~\eqref{cond1} as required in Statement~$(2)$ of the lemma. Eq.~\eqref{cond2} is equivalent to $q-1-\frac{2}{q-1}=1$ if $i=3$ and to $q-1-2(q-1)=1$ if $i=4$. Obviously, the former equality holds if and only if $q=3$ and the latter equality does not hold. Therefore Statements~$(3)$ and~$(4)$ of the lemma hold.
\end{proof}

Let $\Gamma_i=\cay(G,X_2\cup X_i)$, $i\in\{3,4\}$. Then $s_2\cup s_i$ is an edge set of $\Gamma_i$. Theorem~\ref{main}, Lemma~\ref{parameters}, Lemma~\ref{ks0}, and Lemma~\ref{ks1} imply the next corollary.

\begin{corl}\label{ddg1}
In the above notation, the following statements hold.

\begin{enumerate}

\tm{1} The graph $\Gamma_3$ is a divisible design Cayley graph from $\mathcal{K}_1$ with parameters
$$v=q^r\left(\frac{q^r-1}{q-1}\right),~k=q^{r-1}\left(\frac{q^r-1}{q-1}+q-2\right),$$
$$\lambda_1=q^{r-1}(q-1)+\frac{q^r(q^{r-2}-1)}{q-1},~\lambda_2=q^{r-2}\left(2q+\frac{q^r-1}{q-1}-4\right),$$
$$m=\left(\frac{q^r-1}{q-1}\right),~n=q^r.$$
Moreover, $\Gamma_3\in\mathcal{K}_2$, i.e. $\Gamma_3$ is a $(v,k,\lambda)$-graph, if and only if $q=3$.

\tm{2} The graph $\Gamma_4$ is a proper divisible design Cayley graph from $\mathcal{K}_1$ with parameters
$$v=q^r\left(\frac{q^r-1}{q-1}\right),~k=q^{r-1}\left(q^r-1\right),$$
$$\lambda_1=q^{r-1}\left(q^r-q^{r-1}-1\right),~\lambda_2=q^{r-2}(q-1)(q^r-1),$$
$$m=\left(\frac{q^r-1}{q-1}\right),~n=q^r.$$

\end{enumerate}

\end{corl}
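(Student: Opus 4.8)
The plan is to combine the four preceding lemmas with Theorem~\ref{main} and Lemma~\ref{parameters}, and then carry out an explicit substitution of the known valencies. First I would note that Lemma~\ref{ks0} together with Lemma~\ref{ringscheme} ensures that $\mathcal{X}=\mathcal{X}(\mathcal{A})$ is a Higmanian scheme, and the computation just before the statement shows that $\{s_0,\dots,s_4\}$ is the standard ordering of $S$. Since $X_2\cup X_i$ is identity-free and inverse-closed (both facts are established inside the proof of Lemma~\ref{ks0}), $\Gamma_i=\cay(G,X_2\cup X_i)$ is a genuine Cayley graph with edge set $s_2\cup s_i$. By Lemma~\ref{ks1}(1) we have $c_{33}^3=c_{33}^4$, and by Lemma~\ref{ks1}(2) the scheme satisfies Eq.~\eqref{cond1}; hence Theorem~\ref{main} applies and both $\Gamma_3$ and $\Gamma_4$ are divisible design graphs whose canonical partition is the partition into the classes of $e_1$, that is, $\Gamma_i\in\mathcal{K}_1$.

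Next I would record the values $n_1=q^{r-1}-1$, $n_2=q^{r-1}(q-1)$, $n_3=q^{r-1}(l-1)$, $n_4=q^{r-1}(q-1)(l-1)$ with $l=(q^r-1)/(q-1)$, already obtained in the proof of Lemma~\ref{ks1}, and note the two simplifications that drive everything: $1+n_1+n_2=q^r$ and $n_3+n_4=q^r(l-1)$. Substituting into the parameter formulas of Lemma~\ref{parameters}(1) then gives $n=q^r$, $m=l=(q^r-1)/(q-1)$, $v=mn=q^r(q^r-1)/(q-1)$, and $k=n_2+n_i$; for $i=3$ this equals $q^{r-1}(q+l-2)$ and for $i=4$ it equals $q^{r-1}(q-1)l=q^{r-1}(q^r-1)$. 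The expressions for $\lambda_1$ and $\lambda_2$ in each case follow by plugging the same valencies into the formulas of Lemma~\ref{parameters}(1) and reducing; this is the only genuinely computational part, but it is elementary, since every factor in sight is a monomial in $q$ times a factor of $l-1$ or $l$.

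Finally, the statements about membership in $\mathcal{K}_2$ follow from Theorem~\ref{main} and Lemma~\ref{ks1}(3)--(4). For $\Gamma_3$: by Theorem~\ref{main} the graph $\Gamma_3$ is proper exactly when precisely one of Eqs.~\eqref{cond1},~\eqref{cond2} holds, and it is a $(v,k,\lambda)$-graph, equivalently it lies in $\mathcal{K}_1\cap\mathcal{K}_2$, exactly when both hold; by Lemma~\ref{ks1}(3) the condition Eq.~\eqref{cond2} for $i=3$ holds if and only if $q=3$, which yields the claimed dichotomy. For $\Gamma_4$: by Lemma~\ref{ks1}(4) Eq.~\eqref{cond2} for $i=4$ never holds, so only Eq.~\eqref{cond1} is satisfied and Theorem~\ref{main} forces $\Gamma_4$ to be proper. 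I do not anticipate a real obstacle here; the most error-prone step is the algebraic simplification of $\lambda_1$ and $\lambda_2$, which one should double-check, for instance against a small case such as $q=2$, $r=2$.
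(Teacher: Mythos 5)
Your proposal is correct and follows essentially the same route as the paper, which derives the corollary directly by combining Lemma~\ref{ks0}, Lemma~\ref{ks1}, Theorem~\ref{main}, and Lemma~\ref{parameters} and substituting the valencies $n_1=q^{r-1}-1$, $n_2=q^{r-1}(q-1)$, $n_3=q^{r-1}(l-1)$, $n_4=q^{r-1}(q-1)(l-1)$. Your handling of the $\mathcal{K}_2$ dichotomy via Lemma~\ref{ks1}(3)--(4) and the properness criterion of Theorem~\ref{main} is exactly the intended argument.
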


The corollary below follows from Lemma~\ref{connectddgdds} and Corollary~\ref{ddg1}.

\begin{corl}
There are reversible identity-free divisible difference sets over $G$ with parameters~$(m,n,k,\lambda_1,\lambda_2)$ from Statements~$(1)$ and~$(2)$ of Corollary~\ref{ddg1}.
\end{corl}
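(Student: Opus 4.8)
The plan is to obtain the statement as a direct consequence of Lemma~\ref{connectddgdds} and Corollary~\ref{ddg1}, so essentially no new computation is needed. First I would recall from Corollary~\ref{ddg1} that for each $i\in\{3,4\}$ the Cayley graph $\Gamma_i=\cay(G,X_2\cup X_i)$ is a divisible design graph with the parameters $(v,k,\lambda_1,\lambda_2,m,n)$ listed in Statements~$(1)$ and~$(2)$ of that corollary (with $v=mn$). This is exactly the hypothesis of Statement~$(2)$ of Lemma~\ref{connectddgdds}, once one checks that the connection set is legitimate.

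Next I would verify the two minor bookkeeping facts required to invoke Lemma~\ref{connectddgdds}(2): the set $D_i:=X_2\cup X_i$ is identity-free and inverse-closed. The first is immediate, since $1_G\in X_0$ and $X_0$ is disjoint from $X_2\cup X_i$; the second was already established inside the proof of Lemma~\ref{ks0}, where it is shown that $X_2=X_2^{-1}$ by construction and $X_3=X_3^{-1}$, $X_4=X_4^{-1}$ follow from $S=S^{-1}$. Hence $\cay(G,D_i)$ is a Cayley DDG over $G$ to which Lemma~\ref{connectddgdds}(2) applies, and it yields that the canonical partition of $G$ is the partition into right $N$-cosets for some subgroup $N\leq G$ and that $D_i$ is an identity-free reversible divisible difference set over $G$ with parameters $(m,n,k,\lambda_1,\lambda_2)$ relative to $N$. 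Substituting the explicit values of $m,n,k,\lambda_1,\lambda_2$ from Corollary~\ref{ddg1} gives the two claimed families of DDSs. (If desired, one may also pin down $N$: by Lemma~\ref{parameters}(1) together with Lemma~\ref{equivconds} the canonical partition of $\Gamma_i$ is the one into the classes of $e_1=s(A_1)$, so $N=A_1$.)

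I do not expect any real obstacle here: the corollary is a formal translation of ``$\Gamma_i$ is a DDG'' into difference-set language via Lemma~\ref{connectddgdds}(2). The only points needing care are the trivial checks that $D_i$ omits $1_G$ and satisfies $D_i^{-1}=D_i$, both of which are already contained in the setup of Section~$4$.
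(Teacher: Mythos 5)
Your proposal is correct and matches the paper's argument exactly: the paper derives this corollary directly from Lemma~\ref{connectddgdds} and Corollary~\ref{ddg1}, with the identity-freeness and inverse-closedness of $X_2\cup X_i$ already secured in the setup of Section~$4$. The extra bookkeeping you spell out (including identifying $N$ with the subgroup $A_1=A$ underlying the parabolic $e_1$) is consistent with the paper and adds nothing that would change the route.
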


The graphs $\Gamma_3$ and $\Gamma_4$ are the partial complements to each other. The graph $\Gamma_4$ was described in~\cite{KS} for the first time in slightly different terms. It should be mentioned that the constructions of $\Gamma_3$ and $\Gamma_4$ depend on the ordering of the set of maximal subgroups of $A$ and different orderings may lead to nonisomorphic graphs. The question on a necessary and sufficient condition of isomorphism of two DDGs constructed from two different orderings was posed by Kabanov at the XV School-Conference on Group Theory (Ekaterinburg, 2024).

Let $q=2$ and $r\in\{2,3\}$. If $r=2$, then the graphs $\Gamma_3$ and $\Gamma_4$ have parameters
$$(12,6,2,3,3,4).$$
Due to~\cite{PS} (see also~\cite{KS}), there is a unique DDG up to isomorphism with these parameters. So $\Gamma_3$ and $\Gamma_4$ are isomorphic in this case. If $r=3$, then $\Gamma_3$ and $\Gamma_4$ have parameters
$$(56,28,12,14,7,8).$$
From~\cite{KS} it follows that there are three different orderings of maximal subgroups of $A$ which can be used for constructing $\Gamma_3$ and $\Gamma_4$. It can be verified by computer calculations using~\cite{GAP} that $\Gamma_3$ and $\Gamma_4$ are isomorphic for two of these orderings and nonisomorphic for the other one. So the constructions of $\Gamma_3$ and $\Gamma_4$ lead to four pairwise nonisomorphic Cayley DDGs which is consistent with the computational results~\cite{GS}.

Let $q=3$ and $r=2$. Then $\Gamma_3$ is a $(36,15,6)$-graph, whereas $\Gamma_4$ has parameters
$$(36,24,15,16,4,9).$$
 Note that there are $32548$ pairwise nonisomorphic $(36,15,6)$-graphs (see~\cite{BM}) and~$87$ pairwise nonisomorphic DDGs with the above parameters three of which are Cayley graphs (see~\cite{GS,PS}).

\section{DDGs from $\mathcal{K}_2$}

In this section, we prove that the DDGs from~\cite[Construction~20]{PS} belong to $\mathcal{K}_2$. At first, let us describe a construction of a Higmanian scheme. A \emph{$(n,k)$-weighing matrix} $W$ of order $n$ and weight $k$ is defined to be a $(0,1,-1)$-matrix of size~$(n\times n)$ such that 
$$WW^T=kI_n.$$ 
Let $t$ be a positive integer such that there exists a symmetric $(4t,4(t-1))$-weighing matrix $W$ the main diagonal of which consists of the blocks of zeros of size~$4$. It can be verified that there is no such a matrix for $t\in\{1,2\}$ (see also~\cite{PS}) and hence we may assume that $t\geq 3$. Let us construct a matrix $W^\prime$ by replacing in~$W$ each~$0$ by~$O_2$, each~$1$ by~$I_2$, and each~$-1$ by~$J_2-I_2$, where $O_2$, $I_2$, and $J_2$ are the all-zeros, identity, and all-ones matrices of size~$(2\times 2)$, respectively. Note that $W^\prime$ is symmetric because $W$ so is. Since the parameters of $W$ are $(4t,4(t-1))$, each element of $W$ lying outside the union of all blocks of zeros of size~$4$ from the main diagonal is nonzero. Therefore every row and every column of $W^\prime$ contain exactly $4(t-1)$ nonzero elements.

Let us define the following matrices:
$$M_0=I_{8t},~M_1=I_{4t}\otimes (J_2-I_2),~M_2=I_t\otimes ((J_4-I_4)\otimes J_2),~M_3=W^\prime,~M_4=J_{8t}-\sum_{i=0}^{3} M_i,$$
where $\otimes$ denotes the Kronecker product of matrices. Put $\mathcal{M}=\{M_i:~i\in\{0,\ldots,4\}\}$. Since $W$ and hence $W^\prime$ are symmetric, $M_i=M_i^T$ for every $i\in\{0,\ldots,4\}$. In addition, $M_0=I_{8t}\in \mathcal{M}$ and 
$$\sum \limits_{i\in\{0,\ldots,4\}} M_i=J_{8t}.$$

\begin{lemm}
The linear space $\mathcal{W}=\Span_{\mathbb{C}}\{M_i:~i\in\{0,\ldots,4\}\}$ is an algebra with respect to the matrix multiplication.
\end{lemm}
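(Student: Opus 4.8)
The plan is to show that $\mathcal{W}$ is closed under multiplication by verifying that every product $M_iM_j$ lies in $\mathcal{W}$, and the most efficient route is to exploit the tensor structure of $M_0,M_1,M_2$ and the weighing-matrix identity satisfied by $M_3=W'$. First I would record the elementary products among $M_0,M_1,M_2$: since $M_1=I_{4t}\otimes(J_2-I_2)$ and $M_2=I_t\otimes((J_4-I_4)\otimes J_2)$, both are block-diagonal (relative to the partition of the $8t$ coordinates into the $t$ blocks of size $8$, and finer), so $M_1^2$, $M_1M_2=M_2M_1$, and $M_2^2$ are immediately expressible as integer combinations of $I_{8t}, M_1, M_2$ using the identities $(J_2-I_2)^2=I_2+(J_2-I_2)$, $(J_4-I_4)^2=3I_4+2(J_4-I_4)$, $J_2^2=2J_2$, and $(J_4-I_4)J_2$-type relations; this is the routine part.

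The key step is handling products involving $M_3=W'$. The defining property $WW^T=4(t-1)I_{4t}$ together with the symmetry of $W$ and the substitution rules ($0\mapsto O_2$, $1\mapsto I_2$, $-1\mapsto J_2-I_2$) should give an identity of the shape $M_3^2 = W'(W')^T = aI_{8t} + bM_1 + cM_2 + dM_3 + eM_4$ for suitable integers $a,b,c,d,e$; the point is that each diagonal $2\times2$ block of $W'(W')^T$ is a combination of $I_2$ and $J_2$ coming from the $4(t-1)$ nonzero $2\times2$ blocks in each block-row (each contributing either $I_2$ or $(J_2-I_2)^2=I_2+(J_2-I_2)$, i.e. $J_2^2-2J_2+I_2$... more carefully $(J_2-I_2)(J_2-I_2)=I_2$ so each nonzero block contributes $I_2$ or... here one must be careful: $I_2\cdot I_2=I_2$ and $(J_2-I_2)(J_2-I_2)=J_2^2-2J_2+I_2=I_2$, so every diagonal block of $M_3^2$ equals $4(t-1)I_2$ from the diagonal of $WW^T$ — wait, but $W'$ is not literally $W$ with scalar entries, so cross terms $I_2\cdot(J_2-I_2)=J_2-I_2$ appear off-diagonally), and each off-diagonal block is governed by the corresponding entry of $WW^T=0$ plus correction terms. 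I would compute $M_3^2$ block by block using the off-diagonal vanishing of $WW^T$ and check that the result is constant on the support patterns of $M_1,M_2,M_4$. Next I would compute $M_1M_3$ and $M_2M_3$: these amount to multiplying $W'$ on the left by $I_{4t}\otimes(J_2-I_2)$ and by $I_t\otimes((J_4-I_4)\otimes J_2)$ respectively, which replaces each $2\times2$ block of $W'$ by its product with $J_2-I_2$ (mapping $I_2\mapsto J_2-I_2$, $J_2-I_2\mapsto I_2$, $O_2\mapsto O_2$), so $M_1M_3$ is another $\{0,1\}$-matrix whose support differs from that of $M_3$ in a controlled way; one then expresses it via $M_3$ and $M_4$ (and possibly $M_1$). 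Similarly $M_2M_3$: since $(J_4-I_4)\otimes J_2$ acting on a block-row collapses blocks via $J_2$, and using that each $4$-block of $W$'s rows has a fixed number of nonzeros, this should land in $\Span\{M_2,M_3,M_4\}$.

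Finally, once all products $M_iM_j$ for $i,j\in\{0,1,2,3\}$ are shown to lie in $\mathcal{W}$, the products involving $M_4=J_{8t}-M_0-M_1-M_2-M_3$ follow automatically, because $M_iJ_{8t}=n_iJ_{8t}$ (each row of $M_i$ has constant sum $n_i$, which holds here since every row of $W'$ has exactly $4(t-1)$ nonzero entries and the others are tensor-regular), so $M_iM_4=n_iJ_{8t}-M_iM_0-M_iM_1-M_iM_2-M_iM_3\in\mathcal{W}$, and likewise for $M_4M_i$ by symmetry of all the $M_i$. The main obstacle I anticipate is the $M_3$-involving products, specifically getting the cross-term bookkeeping right: one must verify that the number of positions where a given combination of substitution-images lands is the same across all blocks (i.e. that the answer really is a linear combination of the five basis matrices and not some finer pattern), and this is exactly where the hypothesis that $W$ is a genuine weighing matrix with the $4\times4$ zero blocks on the diagonal — rather than an arbitrary $\pm1,0$ matrix — is used. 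It may be cleanest to phrase this via the associated $S$-ring or to invoke Lemma~\ref{algebra}'s framework indirectly, but a direct block computation should suffice.
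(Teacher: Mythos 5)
Your proposal is correct and follows essentially the same route as the paper: a direct computation of the pairwise products (the tensor-structured ones among $M_0,M_1,M_2$, then the $M_3$-products via the block substitution $0\mapsto O_2$, $1\mapsto I_2$, $-1\mapsto J_2-I_2$ together with $WW^T=4(t-1)I_{4t}$ and the $4\times4$ zero diagonal blocks), after which all products involving $M_4=J_{8t}-\sum_{i=0}^3 M_i$ follow from $M_iJ_{8t}=n_iJ_{8t}$. The only blemish is the stated identity $(J_2-I_2)^2=I_2+(J_2-I_2)$ in your first paragraph, which should be $(J_2-I_2)^2=I_2$ (giving $M_1^2=M_0$); you correct this yourself later, and it does not affect the closure argument.
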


\begin{proof}
To prove the lemma, we need to verify that $M_iM_j\in\mathcal{W}$ for all $i,j\in \{0,\ldots,4\}$. A straightforward computation implies that
$$M_1^2=M_0,~M_2^2=6M_0+6M_1+4M_2,~M_1M_2=M_2M_1=M_2.$$
Since $M_4=J_{8t}-\sum_{i=0}^{3} M_i$, it suffices to check that $M_3M_i,M_iM_3\in \mathcal{W}$ for every $i\in\{1,2,3\}$. Again, a straightforward computation using the definition of $W^\prime$ shows that
\begin{equation}\label{radm}
M_1M_3=M_3M_1=M_4,~M_2M_3=M_3M_2=3M_3+3M_4,
\end{equation}
\begin{equation}\label{m3square}
M_3^2=4(t-1)M_0+2(t-1)M_2+2(t-2)(M_3+M_4).
\end{equation}
\end{proof}

According to Section~$2.2$, one can form the scheme $\mathcal{X}=\mathcal{X}(\mathcal{M})=(V,S)$, where $V=\{1,\ldots,8t\}$ and $S=\{s(M_i):~i\in\{0,\ldots,4\}\}$.  

\begin{lemm}\label{ps1}
In the above notation, the scheme $\mathcal{X}$ is Higmanian.
\end{lemm}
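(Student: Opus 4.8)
The plan is to verify the four conditions of Lemma~\ref{algebra}, since we have already exhibited the collection $\mathcal{M}=\{M_0,\ldots,M_4\}$ and checked (in the preceding lemma) that $\mathcal{W}=\Span_\mathbb{C}\mathcal{M}$ is closed under multiplication, so $\mathcal{X}=\mathcal{X}(\mathcal{M})$ is indeed a scheme. Conditions~(1) and~(2) are immediate: $|\mathcal{M}|=5$ by construction, and each $M_i$ is symmetric because $W'$ (and the Kronecker products built from $I$, $J-I$, $J$) is symmetric, a point already noted above.

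For Condition~(3), I would take $M_1$ and $M_2$ as the distinguished basis matrices and set $N_0=M_0+M_1=I_{4t}\otimes J_2$ and $N_1=M_0+M_1+M_2=I_t\otimes J_8$. Both are visibly block-diagonal with all-ones diagonal blocks, of sizes $2$ and $8$ respectively; this is just reading off the Kronecker structure. So Condition~(3) holds with these blocks of constant size.

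The remaining point, Condition~(4), is the one requiring actual argument: I must show $\rad(M_3)=I_{8t}$ and $\rad(M_4)=I_{8t}$. By the discussion preceding Lemma~\ref{algebra}, $\rad(M)$ is the largest block-diagonal $\{0,1\}$-matrix in $\mathcal{M}^\cup$ with constant all-identity blocks such that $\rad(M)M$ is a multiple of $M$; equivalently, since $N_0,N_1$ are the only nontrivial such matrices available in $\mathcal{M}^\cup$, it suffices to rule out $\rad(M_3)\supseteq N_0$ and $\rad(M_3)\supseteq N_1$, and similarly for $M_4$. Here I would use Eq.~\eqref{radm}: $M_1M_3=M_4\neq lM_3$ for any scalar $l$ (they are distinct basis matrices, hence linearly independent), so $N_0M_3=(M_0+M_1)M_3=M_3+M_4$ is not a multiple of $M_3$; therefore $\rad(M_3)$ cannot contain $N_0$, and a fortiori not $N_1$. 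The same computation $M_1M_4=M_3$ (from $M_1(J_{8t}-M_0-M_1-M_2-M_3)=M_4\cdot$, using $M_1J=J$, $M_1^2=M_0$, $M_1M_2=M_2$, $M_1M_3=M_4$, which gives $M_1M_4=J-M_1-M_0-M_2-M_4=M_3$) shows $\rad(M_4)$ cannot contain $N_0$ either. Hence $\rad(M_3)=\rad(M_4)=I_{8t}$.

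Having verified all four conditions of Lemma~\ref{algebra}, we conclude that $\mathcal{X}=\mathcal{X}(\mathcal{M})$ is a Higmanian scheme. The only step with any content is Condition~(4), and even there the work is essentially packaged in Eq.~\eqref{radm}: the heart of the matter is that $M_3$ and $M_4$ are swapped by left-multiplication by $M_1$, so neither can be radical-stable under the parabolic $N_0$. I expect no genuine obstacle; the proof is a short bookkeeping argument once one observes that the needed products were already recorded in Eq.~\eqref{radm} and the preceding display.
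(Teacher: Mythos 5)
Your proof is correct and follows essentially the same route as the paper: both verify the four conditions of Lemma~\ref{algebra}, with the only substantive step being $\rad(M_3)=\rad(M_4)=I$, which both you and the paper extract from Eq.~\eqref{radm} (together with $M_4=J_{8t}-\sum_{i=0}^{3}M_i$). Incidentally, your block size $8$ for $N_1=I_t\otimes J_8$ is the correct one --- the paper's proof misstates it as $6$.
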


\begin{proof}
Clearly, $|\mathcal{M}|=5$. As it was observed before, every matrix from $\mathcal{M}$ is symmetric. It is easy to see that the matrices $N_0=M_0+M_1$ and $N_1=M_0+M_1+M_2$ are block-diagonal matrices with all-identity blocks of sizes~$2$ and~$6$, respectively, on the main diagonal. From Eq.~\eqref{radm} it follows that $\rad(M_3)=I$. As $M_4=J_{8t}-\sum_{i=0}^{3} M_i$, this also holds for $M_4$. Thus, $\mathcal{X}$ is Higmanian by Lemma~\ref{algebra}.
\end{proof}

Put $s_i=s(M_i)$ and $n_i=n_{s_i}$, $i\in\{0,\ldots,4\}$. This ordering of $S$ is standard. Indeed, the relations $e_0=s(N_0)=s_0\cup s_1$ and $e_1=s(N_1)=s_0\cup s_1\cup s_2$ are nontrivial parabolics of $\mathcal{X}$ and $n_3=n_4=4(t-1)$ because the matrices $M_3=W^\prime$ and $M_4=J_{8t}-\sum_{i=0}^{3} M_i$ have exactly $4(t-1)$ nonzero elements in each row and in each column.

\begin{lemm}\label{ps2}
In the above notation, the following statements hold.
\begin{enumerate}

\tm{1} $c_{33}^3=c_{33}^4$.

\tm{2} $\mathcal{X}$ satisfies Eq.~\eqref{cond2} for $i=3$ and $i=4$.

\tm{3} $\mathcal{X}$ does not satisfy Eq.~\eqref{cond1}.

\end{enumerate}
\end{lemm}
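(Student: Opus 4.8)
The plan is to read off everything from the explicit data already assembled. First I would compute the valencies: $n_0=1$, $n_1=1$, $n_2=4$, and $n_3=n_4=4(t-1)$, the last two because $M_3=W'$ and $M_4=J_{8t}-\sum_{i=0}^3 M_i$ each have exactly $4(t-1)$ nonzero entries per row (established just before the lemma). Hence $n_1+1=2$, $n_2=4$, $n_3+n_4=8(t-1)$.

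For Statement~(1), by Lemma~\ref{addcond} the equality $c_{33}^3=c_{33}^4$ is equivalent to $c_{44}^3=c_{44}^4$, which in matrix terms says that in the expansion of $M_3^2$ the coefficients of $M_3$ and $M_4$ coincide. But Eq.~\eqref{m3square} reads $M_3^2=4(t-1)M_0+2(t-1)M_2+2(t-2)(M_3+M_4)$, so those two coefficients are literally equal (both $2(t-2)$). That settles~(1) immediately.

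For Statements~(2) and~(3), I would just plug the valencies into the two conditions from Theorem~\ref{main}. Equation~\eqref{cond2} for $i\in\{3,4\}$ reads $\frac{n_2}{n_1+1}-\frac{2n_i}{n_{7-i}}=1$; since $n_3=n_4$ the quotient $n_i/n_{7-i}=1$, so the left side is $\frac{4}{2}-2=0\neq 1$. Wait — I must double-check the intended reading of Eq.~\eqref{cond2}: with $n_2=4$, $n_1+1=2$ the first term is $2$, and $\frac{2n_i}{n_{7-i}}=2$, giving $2-2=0$, not $1$. This would say $\mathcal{X}$ does \emph{not} satisfy~\eqref{cond2}, contradicting the claimed Statement~(2). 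So the main obstacle — and the step to be careful about — is recomputing the valency $n_2$: one should recheck whether $M_2=I_t\otimes((J_4-I_4)\otimes J_2)$ has row sum $3\cdot 2=6$ rather than $4$, which changes $n_2$ to $6$ and then Eq.~\eqref{cond2} becomes $\frac{6}{2}-2=1$, confirming~(2). Indeed $e_1=s(N_1)$ has blocks of size $6$, so $n_{e_1}=1+n_1+n_2=6$ forces $n_2=4$; but that same block-size computation gives $n_2 = 6 - 1 - n_1$, and the parabolic $e_0=s(N_0)$ has blocks of size $2$ so $n_1=1$, hence $n_2=4$. This tension is exactly where I must be most careful: I will recompute $n_1,n_2$ directly from $M_1,M_2$ (row sums $1$ and, for $M_2$, the number of $1$'s in a row of $I_t\otimes((J_4-I_4)\otimes J_2)$, which is $3\times 2=6$), reconciling with the parabolic block sizes, before substituting.

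Once the correct valencies $n_1=1$, $n_2=6$ (say), $n_3=n_4=4(t-1)$ are fixed, Statement~(2) follows by direct substitution into Eq.~\eqref{cond2}, and Statement~(3) follows because Eq.~\eqref{cond1} would require $\frac{1}{n_3}+\frac{1}{n_4}=\frac{1}{n_1}-\frac{1}{n_1+1}$, i.e. $\frac{2}{4(t-1)}=\frac{1}{1}-\frac{1}{2}=\frac12$, forcing $t-1=1$, i.e. $t=2$, which is excluded since $t\geq 3$; hence~\eqref{cond1} fails. I expect the arithmetic to be routine; the only genuine care needed is nailing down $n_2$ from the Kronecker-product structure of $M_2$ and checking it against the block sizes of the parabolics.
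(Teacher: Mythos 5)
Your proposal is correct and follows essentially the same route as the paper: read off $n_1=1$, $n_2=6$, $n_3=n_4=4(t-1)$ from the row sums of the $M_i$, substitute into Eqs.~\eqref{cond2} and~\eqref{cond1} (the latter failing precisely because $t\geq 3$), and read Statement~(1) directly off Eq.~\eqref{m3square}, where the coefficients of $M_3$ and $M_4$ are literally $c_{33}^3$ and $c_{33}^4$ (so the detour through Lemma~\ref{addcond} is unnecessary, and your identification of $c_{44}^3=c_{44}^4$ with the expansion of $M_3^2$ is a mislabel, though harmless). The ``tension'' you worried about is a false alarm caused by a misprint in the proof of Lemma~\ref{ps1}: $N_1=I_t\otimes J_8$ has all-ones blocks of size~$8$, not~$6$, so the parabolic block size gives $n_2=8-1-n_1=6$, which agrees with the Kronecker-product row count $3\times 2=6$, and your final substitution is the right one.
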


\begin{proof}
Statement~$(1)$ immediately follows from Eq.~\eqref{m3square}. Recall that $n_i$ is equal to the number of nonzero elements in each row of $M_i$, $i\in\{0,\ldots,4\}$. So one can see that
$$n_1=1,~n_2=6,~n_3=n_4=4(t-1).$$ 
It is easy to verify that 
$$\frac{n_2}{n_1+1}-\frac{2n_3}{n_4}=\frac{n_2}{n_1+1}-\frac{2n_4}{n_3}=3-2=1,$$
$$\frac{1}{n_3}+\frac{1}{n_4}=\frac{1}{2(t-1)},~\text{and}~\frac{1}{n_1}-\frac{1}{n_1+1}=\frac{1}{2}.$$
Together with $t\geq 3$, this implies Statements~$(2)$ and~$(3)$ of the lemma.
\end{proof}

Let $\Gamma_i=(V,s_2\cup s_i)$, $i\in \{3,4\}$. Due to Theorem~\ref{main}, Lemma~\ref{parameters}, Lemma~\ref{ps1}, and Lemma~\ref{ps2}, we obtain the corollary below.

\begin{corl}\label{ddg2}
In the above notation, $\Gamma_3$ and $\Gamma_4$ are proper divisible design graphs from $\mathcal{K}_2$ with parameters~$(8t,4t+2,6,2t+2,4t,2)$.
\end{corl}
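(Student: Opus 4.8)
The plan is to deduce Corollary~\ref{ddg2} as a direct consequence of the machinery already assembled, so almost no new computation is required. First I would observe that Lemma~\ref{ps1} establishes that $\mathcal{X}=\mathcal{X}(\mathcal{M})$ is a Higmanian scheme and that the ordering $s_0,\ldots,s_4$ is standard, while Lemma~\ref{ps2}(1) gives $c_{33}^3=c_{33}^4$ and Lemma~\ref{ps2}(2) gives that $\mathcal{X}$ satisfies Eq.~\eqref{cond2} for both $i=3$ and $i=4$. Hence Theorem~\ref{main} applies to each of $\Gamma_3=(V,s_2\cup s_3)$ and $\Gamma_4=(V,s_2\cup s_4)$ and tells us both are DDGs with canonical partition into the classes of $e_0$; moreover, since Lemma~\ref{ps2}(3) says Eq.~\eqref{cond1} fails, \emph{exactly one} of Eqs.~\eqref{cond1} and~\eqref{cond2} holds, so the last clause of Theorem~\ref{main} yields that $\Gamma_3$ and $\Gamma_4$ are proper. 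This also places both graphs in $\mathcal{K}_2$ by the definition of that class (Higmanian scheme with $c_{33}^3=c_{33}^4$ and Eq.~\eqref{cond2}), and they are \emph{not} in $\mathcal{K}_1$ since Eq.~\eqref{cond1} fails; in particular they are not $(v,k,\lambda)$-graphs, consistent with being proper.

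Next I would compute the parameters by substituting the valencies $n_1=1$, $n_2=6$, $n_3=n_4=4(t-1)$ (recorded in the proof of Lemma~\ref{ps2}) into the formulas of Lemma~\ref{parameters}(2), which is the relevant case because Eq.~\eqref{cond2} holds. For $v$ one gets $v=1+n_1+n_2+n_3+n_4=1+1+6+4(t-1)+4(t-1)=8t$. For $k=n_2+n_i=6+4(t-1)=4t+2$, independently of $i\in\{3,4\}$. For the first block size one has $n=1+n_1=2$, and for the number of blocks $m=1+\frac{n_2+n_3+n_4}{1+n_1}=1+\frac{6+8(t-1)}{2}=1+\frac{8t-2}{2}=4t$. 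For $\lambda_1$, the formula $\lambda_1=n_2+\frac{n_i(n_1n_i-n_{7-i})}{n_1(n_3+n_4)}$ becomes $6+\frac{4(t-1)\bigl(4(t-1)-4(t-1)\bigr)}{1\cdot 8(t-1)}=6$, using $n_1=1$ and $n_3=n_4$. For $\lambda_2$, the formula $\lambda_2=n_2-n_1-1+\frac{n_i^2}{n_3+n_4}$ becomes $6-1-1+\frac{16(t-1)^2}{8(t-1)}=4+2(t-1)=2t+2$. So the parameters are $(v,k,\lambda_1,\lambda_2,m,n)=(8t,4t+2,6,2t+2,4t,2)$ for both $\Gamma_3$ and $\Gamma_4$, exactly as claimed.

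There is essentially no hard step here: everything is an application of Theorem~\ref{main} and Lemma~\ref{parameters} combined with the three already-proven facts of Lemma~\ref{ps2}. The only point that needs a word of care is the hypothesis $c_{33}^3=c_{44}^3$ appearing in Lemma~\ref{parameters} (as opposed to the hypothesis $c_{33}^3=c_{33}^4$ in Theorem~\ref{main}); these are equivalent by Lemma~\ref{addcond}, so Lemma~\ref{ps2}(1) supplies what Lemma~\ref{parameters} needs. I would also note explicitly that $t\geq3$, so that $v=8t$, $m=4t$, $n=2$ indeed satisfy $m,n>1$ and the graphs are genuinely proper (this is where $\lambda_1=6\neq 2t+2=\lambda_2$ for $t\neq2$, and the case $t\in\{1,2\}$ was already excluded because the requisite weighing matrix does not exist). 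Thus the corollary follows, and the statement that these coincide with the DDGs of~\cite[Construction~20]{PS} is a matter of comparing parameter tuples and the underlying weighing-matrix data.
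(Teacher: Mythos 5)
Your proposal is correct and follows exactly the paper's route: the paper proves this corollary by citing Theorem~\ref{main}, Lemma~\ref{parameters}, Lemma~\ref{ps1}, and Lemma~\ref{ps2}, and your substitution of $n_1=1$, $n_2=6$, $n_3=n_4=4(t-1)$ into Lemma~\ref{parameters}(2) is the intended (and correctly executed) computation, including the observation that the hypotheses $c_{33}^3=c_{44}^3$ and $c_{33}^3=c_{33}^4$ are reconciled via Lemma~\ref{addcond}.
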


We finish this section with several remarks. In fact,~\cite[Construction~20]{PS} is a special case of the construction given in~\cite[Theorem~4.4]{CH}. However, there are DDGs arising from the latter construction which can not be obtained as fusions of a Higmanian scheme (see, e.g.,~\cite[Construction~21]{PS}). 

It is known that there is (at least) one $(4t,4(t-1))$-weighing matrix~$W_t$ the main diagonal of which consists of the blocks of zeros of size~$4$ for each $t\in\{3,4,5,6\}$ (see, e.g.,~\cite{Wallis}). The matrices $W_3$, $W_4$, $W_5$, and $W_6$ lead to DDGs with parameters
$$(24,14,6,8,12,2),~(32,18,6,10,16,2),~(40,22,6,12,20,2),~\text{and}~(48,26,6,14,24,2),$$
respectively. DDGs with the first and second parameters can be found in~\cite[Table~1]{PS}, whereas DDGs with the third and fourth parameters appear among Cayley-Deza graphs from~\cite{GS}. For the first and second collections of parameters, there is a unique DDG up to isomorphism (see~\cite[Table~1]{PS}). So $\Gamma_3$ and $\Gamma_4$ are isomorphic in this case. It can be verified by a computer calculation using~\cite{GAP} that the latter also holds in case of parameters~$(40,22,6,12,20,2)$. However, $\Gamma_3$ and $\Gamma_4$ are nonisomorphic in case of parameters~$(48,26,6,14,24,2)$ due to computer calculations using~\cite{GAP}. In view of~\cite{GS}, there are exactly seven pairwise nonisomorphic Cayley DDGs with these parameters. 

Very recent results~\cite[Proposition~4.4]{GHKL} imply that there are infinitely many $(4t,4(t-1))$-weighing matrix the main diagonal of which consists of the blocks of zeros of size~$4$.

\section{New family of DDGs}

In this section, we provide a new infinite family of DDGs which are Cayley graphs over generalized dihedral groups. The construction is based on DDSs in abelian groups. Let $A$ be an abelian group and $G=A\rtimes \langle b\rangle$, where $|b|=2$ and $a^b=a^{-1}$ for every $a\in A$. Clearly, $G$ is a generalized dihedral group associated with $A$. Let $D$ be a DDS in~$A$ with parameters~$(m,n,k,\lambda_1,\lambda_2)$, where $m,n>1$, relative to a proper nontrivial subgroup~$N$ of $A$. Suppose that $D$ satisfies the intersection condition. Together with $m,n>1$, this implies that $k,mn-k>1$. We may assume further that $k\leq mn-k$. Indeed, if $k>mn-k$, then we replace $D$ by $A\setminus D$ which is also a DDS in $A$ satisfying the intersection condition.  

Let us define a partition of $G$ into the following subsets:
$$X_0=\{e\},~X_1=N^\#,~X_2=A\setminus N,~X_3=Db,~X_4=(A\setminus D)b.$$
Since $m,n>1$ and $D$ satisfies the intersection condition, each of the above subsets is nonempty. 

\begin{lemm}\label{mp0}
The $\mathbb{Z}$-module $\mathcal{A}=\Span_{\mathbb{Z}}\{\underline{X_i}:~i\in\{0,\ldots,4\}\}$ is a Higmanian $S$-ring over~$G$.
\end{lemm}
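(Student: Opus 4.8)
The plan is to verify the four conditions of the definition of a Higmanian $S$-ring (equivalently, to apply Lemma~\ref{ringscheme} after checking the $S$-ring axioms). First I would confirm that $\mathcal{A}$ is actually an $S$-ring: the partition $\{X_0,\dots,X_4\}$ clearly satisfies $\{1_G\}\in\mathcal{S}$, and it is inverse-closed because $N^\#$ and $A\setminus N$ are inverse-closed (as $N$ and $A$ are subgroups of an abelian group), and $X_3=Db$, $X_4=(A\setminus D)b$ are inverse-closed precisely because $D$ and $A\setminus D$ are reversible in $A$ — which uses that $D$ is a reversible DDS, or more precisely that $(ab)^{-1}=b^{-1}a^{-1}=a^{-1}b=a^b b$, so $(Db)^{-1}=D^{-1}b$ and reversibility of $D$ gives $D^{-1}=D$; I should check whether the hypotheses as stated guarantee $D^{-1}=D$, and if not, note that one may use $D\cup D^{-1}$ considerations or that the intersection-condition DDSs from Lemma~\ref{dds} (built from subspaces) are automatically reversible in the relevant abelian $2$-groups — this is a subtle point to watch.

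The main work is closure under multiplication, i.e.\ checking $\underline{X_i}\cdot\underline{X_j}\in\mathcal{A}$ for all $i,j$. For the ``$A$-part'' $\{X_0,X_1,X_2\}$ the products are the standard ones coming from the rank-$3$ scheme attached to the subgroup chain $1<N<A$: using Eq.~\eqref{easy}, $\underline{N^\#}^2=(n-1)1_G+(n-2)\underline{N^\#}$, $\underline{N^\#}\cdot\underline{(A\setminus N)}=(n-1)\underline{(A\setminus N)}$, and $\underline{(A\setminus N)}^2$ expands into a combination of $1_G$, $\underline{N^\#}$, $\underline{(A\setminus N)}$. For the mixed products I would compute $\underline{X_1}\cdot\underline{X_3}=\underline{N^\#}\cdot\underline{D}\,b$: since $D$ satisfies the intersection condition, $\underline{N}\cdot\underline{D}$ is a multiple of $\underline{N}$-cosets in a controlled way, giving $\underline{N^\#}\cdot\underline{D}=(\text{const})\underline{D}+(\text{const})\underline{A\setminus D}$, hence $\underline{X_1}\cdot\underline{X_3}\in\Span(\underline{X_3},\underline{X_4})$; similarly for $\underline{X_2}\cdot\underline{X_3}$, using $\underline{A}\cdot\underline{D}=k\underline{A}$. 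The crucial product is $\underline{X_3}^2=\underline{Db}\cdot\underline{Db}=\underline{D}\cdot\underline{D}^{b}=\underline{D}\cdot\underline{D^{-1}}$ (since $b$ inverts $A$ and $b^2=1$), which by the DDS identity equals $k\,1_G+\lambda_1\underline{N^\#}+\lambda_2\underline{(A\setminus N)}\in\mathcal{A}$; likewise $\underline{X_3}\cdot\underline{X_4}$ and $\underline{X_4}^2$ follow by writing $\underline{X_4}=\underline{A}b-\underline{X_3}$ and expanding, exactly as in the proof of Lemma~\ref{ks0}.

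After closure, verifying the Higmanian conditions is short: $\rk(\mathcal{A})=5$ by construction; $\mathcal{A}$ is symmetric since each $X_i$ is inverse-closed; $A_0=X_0\cup X_1=N$ and $A_1=X_0\cup X_1\cup X_2=A$ are $\mathcal{A}$-subgroups with $A_0<A_1<G$. It remains to show $A_0$ and $A_1$ are the only proper nontrivial $\mathcal{A}$-subgroups and that $\rad(X_3)=\rad(X_4)=\{1_G\}$. For the first, any $\mathcal{A}$-subgroup containing an element of $X_3$ or $X_4$ (i.e.\ a coset-$b$ element) must be the whole of $G=\langle A,b\rangle$ since already one such element together with $A$ generates $G$; so a proper $\mathcal{A}$-subgroup lies in $A$ and, being an $\mathcal{A}$-set, is a union of $\{1_G\},N^\#,A\setminus N$, forcing it to be $\{1_G\}$, $N$, or $A$. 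For the radicals: if $g\in\rad(X_3)$ then $gDb=Db$, so $gD=D$, meaning $g\in\rad(D)\le A$; but $\rad(D)$ is nontrivial only if $D$ is a union of cosets of a nontrivial subgroup, which would force $\rad(D)=N$ or more, contradicting that $D$ meets each $N$-coset in the same positive number $<n$ of elements (intersection condition with $1\le|D\cap Ng|<n$) unless that radical is trivial — here I expect the hard part: pinning down exactly why the intersection condition plus $k<mn$ forces $\rad(D)=\{1_A\}$, and transferring this to $\rad(X_3)=\rad(X_4)=\{1_G\}$ (the latter then following from $X_4=(A\setminus D)b$ and $\rad(A\setminus D)=\rad(D)$). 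Once this is settled, all four conditions of the Higmanian $S$-ring definition hold and the lemma follows.
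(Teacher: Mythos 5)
Your overall route is the paper's: verify the $S$-ring axioms by computing all products $\underline{X_i}\cdot\underline{X_j}$ (the $\{X_0,X_1,X_2\}$ part from the chain $1<N<A$, the mixed products $\underline{X_1}\cdot\underline{X_3}$ and $\underline{X_2}\cdot\underline{X_3}$ from the intersection condition, $\underline{X_3}^2=\underline{D}\cdot\underline{D}^{-1}$ from the defining identity of a DDS, and $X_4$ by complementation), and then check the four Higmanian conditions, with $\rad(X_3)$ handled by noting it is an $\mathcal{A}$-subgroup contained in $A$ and ruling out $N\leq\rad(X_3)$ via the intersection condition together with $1\leq|D\cap Ng|=\mu<n$. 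All of that matches the paper's proof, including the key identity $(db)(d'b)=d(d')^{-1}$ behind $\underline{X_3}^2$.

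The one genuine flaw is in your treatment of inverse-closedness of $X_3$ and $X_4$. The computation $(ab)^{-1}=b^{-1}a^{-1}=a^{-1}b$ is wrong: in the generalized dihedral group one has $ba^{-1}b^{-1}=(a^{-1})^{b^{-1}}=(a^{-1})^{b}=a$, so $b^{-1}a^{-1}=ba^{-1}=ab$ and hence $(ab)^{-1}=ab$. In other words, every element of the coset $Ab$ is an involution, so $(Db)^{-1}=Db$ for \emph{any} subset $D\subseteq A$, and no reversibility of $D$ is needed. This matters because the fallbacks you propose do not work: $D$ is nowhere assumed reversible in this construction, and the DDSs of Lemma~\ref{dds} live in abelian groups of order $q^{2r-s}(q^s-1)/(q-1)$ that are not $2$-groups and whose sets $\bigcup_i H_ig_i$ need not satisfy $D^{-1}=D$; if reversibility of $D$ were genuinely required, the applications in Corollaries~\ref{ddsddg1} and~\ref{ddsddg2} would not go through. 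Replace that step by the observation that all elements of $G\setminus A$ have order~$2$ (which is exactly what the paper does), and the rest of your argument stands.
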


\begin{proof}
In~\cite[Theorem~12.2]{MP}, it was verified that $\mathcal{A}$ is an $S$-ring in case of a dihedral group. However, the proof is the same in case of a generalized dihedral group. To make the text self-contained, we provide the proof here. Observe that $X_i=X_i^{-1}$ for every $i\in\{0,\ldots,4\}$. Indeed, this is obvious for $i\in\{0,1,2\}$, whereas this is true for $i\in\{3,4\}$ because all the elements from $G\setminus A$ are of order~$2$.  A straightforward computation using Eq.~\eqref{easy} in the group ring $\mathbb{Z}G$ implies that
$$\underline{X_1}^2=(n-1)e+(n-2)\underline{X_1},$$
$$\underline{X_2}^2=n(m-1)e+n(m-1)\underline{X_1}+n(m-2)\underline{X_2},$$
$$\underline{X_1}\cdot\underline{X_2}=\underline{X_2}\cdot\underline{X_1}=(n-1)\underline{X_2}.$$
Since $\underline{X_4}=\underline{G}^\#-\underline{X_1}-\underline{X_2}-\underline{X_3}$, it remains to verify that $\underline{X_3}\cdot\underline{X_i},\underline{X_i}\cdot\underline{X_3}\in \mathcal{A}$, where $i\in\{1,2,3\}$. As $D$ satisfies the intersection condition, there is $\mu\geq 1$ such that $|D\cap Na|=\mu$ for every $a\in A$. Using this observation, one can compute that
\begin{equation}\label{x3new}
\underline{X_1}\cdot\underline{X_3}=\underline{X_3}\cdot\underline{X_1}=(\mu-1)\underline{X_3}+\mu\underline{X_4},
\end{equation}
and
$$\underline{X_2}\cdot\underline{X_3}=\underline{X_3}\cdot\underline{X_2}=(k-\mu)(\underline{X_3}+\underline{X_4}).$$
Due to the definition of a DDS, we have
\begin{equation}\label{difsetsquare}
\underline{X_3}^2=\underline{D}\cdot \underline{D}^{-1}=ke+\lambda_1\underline{X_1}+\lambda_2\underline{X_2}.
\end{equation}
Thus, $\mathcal{A}$ is an $S$-ring.

Let us verify that $\mathcal{A}$ is Higmanian. Clearly, $\rk(\mathcal{A})=5$. As $X_i=X_i^{-1}$ for every $i\in\{0,\ldots,4\}$, we conclude that $\mathcal{A}$ is symmetric. The groups $A_0=X_0\cup X_1$ and $A_1=X_0\cup X_1\cup X_2$ are the only proper nontrivial $\mathcal{A}$-subgroups. Assume that $|\rad(X_3)|>1$. Since $X_3\subseteq Ab$, we have $\rad(X_3)\leq A$. So $\rad(X_3)\geq A_0$ because $A_0$ is the least nontrivial $\mathcal{A}$-subgroup of~$A$. However, $\rad(X_3)\geq A_0$ contradicts to Eqs.~\eqref{easy} and~\eqref{x3new}. Therefore $|\rad(X_3)|=1$. As $X_4=G\setminus (A\cup X_3)$, the latter also holds for $X_4$. Thus, all the conditions from the definition of a Higmanian $S$-ring are satisfied and hence $\mathcal{A}$ is a Higmanian $S$-ring.  
\end{proof}

The scheme $\mathcal{X}=\mathcal{X}(\mathcal{A})$ is a Higmanian scheme by Lemma~\ref{ringscheme}. Put $s_i=s(X_i)$, $n_i=n_{s_i}$, and $c_{ij}^k=c_{s_is_j}^{s_k}$, $i,j,k\in\{0,\ldots,4\}$. Clearly, $s_0=\textbf{1}_G$ and $e_0=s(A_0)=s_0\cup s_1$ and $e_1=s(A_1)=s_0\cup s_1\cup s_2$ are nontrivial parabolics of $\mathcal{X}$. Since $k\leq mn-k$, we have $|X_3|\leq |X_4|$ and hence $n_3\leq n_4$. So the ordering $S=\{s_0,\ldots,s_4\}$ of $S$ is standard. Put also $k_3=k$ and $k_4=mn-k$.

\begin{lemm}\label{mp1}
In the above notation, the following statements hold.
\begin{enumerate}
\tm{1} $c_{33}^3=c_{33}^4$.

\tm{2} $\mathcal{X}$ satisfies Eq.~\eqref{cond1} if and only if 
\begin{equation}\label{difset1}
\frac{1}{k}+\frac{1}{mn-k}=\frac{1}{n-1}-\frac{1}{n}.
\end{equation}

\tm{3} $\mathcal{X}$ satisfies Eq.~\eqref{cond2} for $i\in\{3,4\}$ if and only if 
\begin{equation}\label{difset2}
k_{7-i}=2n.
\end{equation}

\tm{4} $\mathcal{X}$ satisfies Eqs.~\eqref{cond1} and~\eqref{cond2} for some $i\in\{3,4\}$ if and only if $D$ is a relative difference set with parameters~$(4,2,4,2)$.

\end{enumerate}

\end{lemm}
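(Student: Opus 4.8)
The plan is to translate the two displayed conditions of Theorem~\ref{main} into statements about the parameters $m,n,k$ of $D$, using the valencies $n_1=|X_1|=n-1$, $n_2=|X_2|=n(m-1)$, $n_3=|X_3|=k$, $n_4=|X_4|=mn-k$ (the assumption $k\le mn-k$ being exactly what makes $n_3\le n_4$), and then to read off everything else from the group-ring identities already obtained in the proof of Lemma~\ref{mp0}. Statement~(1) is immediate: by Eq.~\eqref{difsetsquare} we have $\underline{X_3}^2=ke+\lambda_1\underline{X_1}+\lambda_2\underline{X_2}$, so the coefficients of $\underline{X_3}$ and of $\underline{X_4}$ in $\underline{X_3}^2$ both vanish, i.e. $c_{33}^3=c_{33}^4=0$.

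For Statement~(2) I would substitute the valencies into Eq.~\eqref{cond1}; since $n_1+1=n$, this becomes verbatim Eq.~\eqref{difset1}. For Statement~(3) I would substitute into Eq.~\eqref{cond2}: here $n_2/(n_1+1)=m-1$, so for $i=3$ the condition reads $(m-1)-2k/(mn-k)=1$, which clears to $mk=(m-2)mn$, i.e. $k=(m-2)n$, equivalently $mn-k=2n$; the case $i=4$ is symmetric and yields $k=2n$. In both cases the resulting relation is precisely $k_{7-i}=2n$, that is, Eq.~\eqref{difset2}.

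For Statement~(4) I would feed the relevant instance of Eq.~\eqref{difset2} into Eq.~\eqref{difset1}: writing $k=(m-2)n$ (for $i=3$) or $k=2n$ (for $i=4$) and simplifying, one finds that \emph{both} choices of $i$ reduce Eq.~\eqref{difset1} to the single equation $\tfrac{1}{m-2}+\tfrac12=\tfrac{1}{n-1}$, equivalently $m-2=2(n-1)/(3-n)$. Since $m>1$ and $n>1$ make the valencies positive, positivity of the right-hand side forces $3-n>0$, hence $n=2$, and then $m=4$ and $k=4$. Now the intersection condition gives $|D\cap Na|=|D|/[A:N]=k/m=1$ for every $a\in A$, so $D$ is a transversal for $N$ in $A$; consequently $\lambda_1=0$ (two distinct elements of $D$ never lie in the same $N$-coset), i.e. $D$ is a relative difference set, and applying the augmentation homomorphism $\mathbb{Z}A\to\mathbb{Z}$, $a\mapsto 1$, to $\underline{D}\cdot\underline{D}^{-1}=ke+\lambda_1\underline{X_1}+\lambda_2\underline{X_2}$ gives $k^2=k+\lambda_2 n(m-1)$, hence $\lambda_2=2$; thus $D$ has parameters $(4,2,4,2)$. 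Conversely, a relative difference set with parameters $(4,2,4,2)$ has $k=m=4$, so it is semiregular and hence satisfies the intersection condition (so it fits the setting of this section), and one checks directly that $m=4$, $n=2$, $k=4$ satisfy Eqs.~\eqref{difset1} and~\eqref{difset2} for $i=3$ (and for $i=4$), so $\mathcal{X}$ satisfies Eqs.~\eqref{cond1} and~\eqref{cond2} by Statements~(2) and~(3).

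No step here is genuinely difficult; the points deserving a little care are that the two sub-cases $i=3,4$ of Statement~(4) collapse to one and the same equation after Eq.~\eqref{difset2} is used, that it is positivity of the valencies (not a divisibility argument) that pins down $n=2$, and that the intersection condition is precisely what promotes the divisible difference set to a semiregular relative difference set once the equality $k=m$ has been forced.
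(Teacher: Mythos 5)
Your proposal is correct and follows essentially the same route as the paper: read $c_{33}^3=c_{33}^4=0$ off Eq.~\eqref{difsetsquare}, substitute the valencies $n_1=n-1$, $n_2=n(m-1)$, $n_3=k$, $n_4=mn-k$ into Eqs.~\eqref{cond1} and~\eqref{cond2}, and in Statement~(4) combine the two conditions, use positivity to force $n=2$ and then $(m,n,k)=(4,2,4)$, and invoke the intersection condition to see that $D$ is a transversal and hence a semiregular RDS with $\lambda_1=0$, $\lambda_2=2$. The only cosmetic differences are that you first collapse both subcases to the single equation $\tfrac{1}{m-2}+\tfrac12=\tfrac{1}{n-1}$ (the paper instead isolates $\tfrac{1}{k}$ or $\tfrac{1}{mn-k}$) and that you compute $\lambda_2$ by augmentation rather than from $k=\lambda n$.
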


\begin{proof}
From Eq.~\eqref{difsetsquare} it follows that $c_{33}^3=c_{33}^4=0$ and hence Statement~$(1)$ of the lemma holds. One can see that that
$$n_1=|X_1|=|N|-1=n-1,~n_2=|X_2|=|A|-|N|=mn-n=n(m-1),$$
$$n_3=|X_3|=|D|=k,~n_4=|X_4|=|A|-|D|=mn-k.$$
Now Statements~$(2)$ and~$(3)$ can be verified by a straightforward substitution of the above expressions for $n_i$'s to Eqs.~\eqref{cond1} and~\eqref{cond2}.

Let us prove Statement~$(4)$ of the lemma. In view of Statements~$(2)$ and~$(3)$, we may replace Eqs.~\eqref{cond1} and~\eqref{cond2} in Statement~$(4)$ by Eqs.~\eqref{difset1} and~\eqref{difset2}, respectively. Clearly, $(m,n,k)=(4,2,4)$ satisfy Eqs.~\eqref{difset1} and~\eqref{difset2} for $i=3$ and $i=4$. Conversely, suppose that Eqs.~\eqref{difset1} and~\eqref{difset2} for some $i\in\{3,4\}$ hold for $m,n,k$. Then substituting one of the expressions $k=2n$ or $mn-k=2n$ to Eq.~\eqref{difset1} and reducing the equality, we obtain
\begin{equation}\label{technical}
\frac{3-n}{2n(n-1)}=\frac{1}{mn-k}~\text{or}~\frac{3-n}{2n(n-1)}=\frac{1}{k},
\end{equation} 
respectively. This implies that $3-n>0$. Due to~$n>1$, we conclude that $n=2$. If $k=2n$, then $k=4$ and the first part of Eq.~\eqref{technical} yields that $mn-k=2m-k=4$. So $m=4$. If $mn-k=2n$, then $mn-k=4$ and the second part of Eq.~\eqref{technical} yields that $k=4$. Again, $m=(mn-k+k)/n=(4+k)/n=4$. Therefore $(m,n,k)=(4,2,4)$ in both cases. Since $D$ satisfies the intersection condition and $|D|=k=4=m=|A:N|$, the set $D$ is a transversal for $N$ in $A$. Thus, $D$ is a semiregular RDS and hence $\lambda_1=0$ and $\lambda_2=m/n=2$.
\end{proof}

Let $\Gamma_i=\cay(G,X_2\cup X_i)$, $i\in\{3,4\}$. Then $s_2\cup s_i$ is an edge set of $\Gamma_i$. Theorem~\ref{main}, Lemma~\ref{parameters}, Lemma~\ref{mp0}, and Lemma~\ref{mp1} imply the next corollary. To avoid an ambiguity appearing due to the same standard notations of parameters of a DDS and a DDG, we denote the parameters of a DDG by $(v^\prime,k^\prime,\lambda_1^\prime,\lambda_2^\prime,m^\prime,n^\prime)$ in the corollary below.

\begin{corl}\label{ddg3}
In the above notation, the following statements hold.

\begin{enumerate}

\tm{1} If Eq.~\eqref{difset1} holds and $(m,n,k)\neq (4,2,4)$, then $\Gamma_i$, $i\in\{3,4\}$, is a proper divisible design Cayley graph from $\mathcal{K}_1$ with parameters
$$v^\prime=2mn,~k^\prime=n(m-1)+k_i,$$
$$\lambda_1^\prime=n(m-1)+\frac{k_i(k_i-m)}{(n-1)m},\lambda_2^\prime=\frac{2(m-1)k_i}{m},$$
$$m^\prime=2,~n^\prime=mn.$$

\tm{2}  If Eq.~\eqref{difset2} holds for $i\in\{3,4\}$ and $(m,n,k)\neq (4,2,4)$, then $\Gamma_i$ is a proper divisible design Cayley graph from $\mathcal{K}_2$ with parameters
$$v^\prime=2mn,~k^\prime=n(m-1)+k_i,$$
$$\lambda_1^\prime=n(m-1)+\frac{k_i(k_i-m)}{(n-1)m},\lambda_2^\prime=n(m-2)+\frac{k_i^2}{mn},$$
$$m^\prime=2m,~n^\prime=n.$$

\tm{3} If $(m,n,k)=(4,2,4)$, then $\Gamma_3$ and $\Gamma_4$ are $(16,10,6)$-graphs. 

\end{enumerate}
 
\end{corl}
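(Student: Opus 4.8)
The plan is to deduce all three statements from machinery already assembled: Lemmas~\ref{mp0} and~\ref{ringscheme} make $\mathcal{X}=\mathcal{X}(\mathcal{A})$ a Higmanian scheme with standard ordering $s_0,\dots,s_4$, Lemma~\ref{mp1} records which hypotheses of Theorem~\ref{main} it satisfies, and Lemma~\ref{parameters} converts those into DDG parameters; nothing new is needed beyond bookkeeping. Throughout I would use that $X_2\cup X_i$ is identity-free and inverse-closed, so $\Gamma_i=\cay(G,X_2\cup X_i)$ is a genuine Cayley graph with edge set $s(X_2)\cup s(X_i)=s_2\cup s_i$, and that the valencies computed in the proof of Lemma~\ref{mp1} are $n_1=n-1$, $n_2=n(m-1)$, $n_3=k$, $n_4=mn-k$. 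The identities $n_3+n_4=mn$, $1+n_1+n_2=mn$, $n_3+n_4-n_2-n_1-1=0$, together with $k_i+k_{7-i}=mn$, are precisely what collapse the generic formulas of Lemma~\ref{parameters} to the ones in the statement.

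For~(1): Lemma~\ref{mp1}(1) gives $c_{33}^3=c_{33}^4$ and Lemma~\ref{mp1}(2) with the hypothesis~\eqref{difset1} gives that $\mathcal{X}$ satisfies~\eqref{cond1}; hence Theorem~\ref{main} applies, $\Gamma_i$ is a DDG with canonical partition into the classes of~$e_1$, and so $\Gamma_i\in\mathcal{K}_1$ by the definition of $\mathcal{K}_1$. Since $(m,n,k)\neq(4,2,4)$, Lemma~\ref{mp1}(4) combined with~\eqref{cond1} forces~\eqref{cond2} to fail for both $i=3$ and $i=4$, so exactly one of~\eqref{cond1} and~\eqref{cond2} holds and $\Gamma_i$ is proper by Theorem~\ref{main}. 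The parameter list then follows by substituting $n_1,\dots,n_4$ (with $n_i=k_i$, $n_{7-i}=k_{7-i}$) into Lemma~\ref{parameters}(1) (its hypothesis being guaranteed by Lemma~\ref{mp1}(1)): $v'=2mn$, $k'=n(m-1)+k_i$, $n'=mn$, $m'=2$ are immediate, $\lambda_2'=2(m-1)k_i/m$ because the factor $n_3+n_4-n_2-n_1-1$ vanishes, and the displayed form of $\lambda_1'$ is obtained by rewriting $(n-1)n_i-n_{7-i}$ with the help of $k_i+k_{7-i}=mn$. Statement~(2) is entirely parallel, with~\eqref{cond1} replaced by~\eqref{cond2}: Lemma~\ref{mp1}(1) and Lemma~\ref{mp1}(3) with the hypothesis~\eqref{difset2} put us in the second branch of Theorem~\ref{main}, so $\Gamma_i$ is a proper DDG in $\mathcal{K}_2$ with canonical partition into the classes of~$e_0$ (properness again from $(m,n,k)\neq(4,2,4)$ via Lemma~\ref{mp1}(4), which now rules out~\eqref{cond1}), and the parameters come from Lemma~\ref{parameters}(2), where this time $n'=n$, $m'=2m$ and $\lambda_2'=n(m-2)+k_i^2/(mn)$.

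Finally, for~(3): when $(m,n,k)=(4,2,4)$ one has $k_3=k_4=4=2n$, so both~\eqref{difset1} and~\eqref{difset2} hold for each $i\in\{3,4\}$, whence by Lemma~\ref{mp1}(2),(3) the scheme $\mathcal{X}$ satisfies both~\eqref{cond1} and~\eqref{cond2}; Theorem~\ref{main} then makes each $\Gamma_i$ a DDG that is not proper, i.e.\ a $(v,k,\lambda)$-graph, and substituting $(m,n,k)=(4,2,4)$ into the formulas of Lemma~\ref{parameters} gives $v'=16$, $k'=10$, $\lambda_1'=\lambda_2'=6$. There is no conceptual difficulty in any of this; the one step that will need attention is the algebraic reduction of the parameter formulas, namely checking the identities $n_3+n_4=mn$, $1+n_1+n_2=mn$, $n_3+n_4-n_2-n_1-1=0$ and $k_i+k_{7-i}=mn$ and verifying that they turn the expressions of Lemma~\ref{parameters} into the stated ones. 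Everything else is a direct invocation of Theorem~\ref{main}, Lemma~\ref{parameters}, and Lemma~\ref{mp1}.
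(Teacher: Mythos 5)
Your proposal is correct and follows exactly the route the paper intends: the paper derives Corollary~\ref{ddg3} by combining Theorem~\ref{main}, Lemma~\ref{parameters}, Lemma~\ref{mp0}, and Lemma~\ref{mp1}, which is precisely your argument, and your substitutions $n_1=n-1$, $n_2=n(m-1)$, $n_3=k_i$, $n_4=k_{7-i}$ together with the identities $n_3+n_4=1+n_1+n_2=mn$ correctly collapse the generic parameter formulas to the stated ones. The properness argument via Lemma~\ref{mp1}(4) and the $(16,10,6)$ computation in part~(3) also check out.
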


It should be mentioned that there is a unique up to isomorphism $(16,10,6)$-graph which is the Clebsch graph (see~\cite[Section~10.7]{BM}). So $\Gamma_3 $ and $\Gamma_4$ are isomorphic to this graph whenever $(m,n,k)=(4,2,4)$.

Further, we are going to use Corollary~\ref{ddg3} and DDSs from Section~$2.4$ for constructing DDGs. 

\begin{corl}\label{ddsddg1}
Let $q$ be a prime power, $r\geq 2$, and $A$ an abelian group of order $q^{r+1}(q^{r-1}-1)/(q-1)$ having an elementary abelian subgroup of order~$q^r$. Then there are divisible design Cayley graphs from $\mathcal{K}_1$ over the generalized dihedral group associated with $A$ with parameters
$$v=2q^{r+1}\left(\frac{q^{r-1}-1}{q-1}\right),~k=q^{r-1}\left(\frac{q(q+1)(q^{r-1}-1)}{q-1}-1\right),$$
$$\lambda_1=q^{r-1}\left(q^r+q^{r-1}+q-1+\frac{2(q^{r-1}-q^2)}{q-1}\right),~\lambda_2=2q^{r-2}\left(\frac{q^{r+1}-q^2}{q-1}-1\right),$$
$$m=2,~n=q^{r+1}\left(\frac{q^{r-1}-1}{q-1}\right),$$
and 
$$v=2q^{r+1}\left(\frac{q^{r-1}-1}{q-1}\right),~k=q^{r-1}\left(\frac{q(2q-1)(q^{r-1}-1)}{q-1}-1\right),$$
$$\lambda_1=q^{r-1}\left(2q^r-q-1+\frac{q^{r-1}-q^2}{q-1}\right),~\lambda_2=2q^{r-2}\left(q^{r+1}-q^2-q+1\right),$$
$$m=2,~n=q^{r+1}\left(\frac{q^{r-1}-1}{q-1}\right).$$
These divisible design graphs are proper if and only if $q\neq 2$ or $r\neq 2$.
\end{corl}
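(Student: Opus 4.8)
The plan is to feed the divisible difference sets of Lemma~\ref{dds} into the construction of Section~6. First I would apply Lemma~\ref{dds} with $s=r-1$: then the hypothesis $r\geq s\geq 1$ becomes exactly $r\geq 2$, the order $q^{2r-s}(q^s-1)/(q-1)$ equals $q^{r+1}(q^{r-1}-1)/(q-1)$, and the abelian group with an elementary abelian subgroup of order $q^r$ is precisely the group $A$ of the statement. Hence $A$ carries a divisible difference set $D$, relative to a proper nontrivial subgroup $N$, satisfying the intersection condition, with parameters
$$m=q^2\,\frac{q^{r-1}-1}{q-1},\qquad n=q^{r-1},\qquad k=q^r\,\frac{q^{r-1}-1}{q-1}.$$
One checks at once that $m,n>1$, and since $q\geq 2$ we have $k\leq mn-k=q^r(q^{r-1}-1)$, so $D$ meets the standing hypotheses of Section~6; I then form the generalized dihedral group $G=A\rtimes\langle b\rangle$ associated with $A$, the partition $X_0,\dots,X_4$ of $G$ built from $D$ and $N$, and put $k_3=k$, $k_4=mn-k$.

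Next I would verify Eq.~\eqref{difset1} for $D$, which reduces to the short computation
$$\frac{1}{k}+\frac{1}{mn-k}=\frac{q-1}{q^r(q^{r-1}-1)}+\frac{1}{q^r(q^{r-1}-1)}=\frac{q}{q^r(q^{r-1}-1)}=\frac{1}{q^{r-1}(q^{r-1}-1)}=\frac{1}{n-1}-\frac{1}{n}.$$
By Lemma~\ref{mp0}, the module $\mathcal A$ attached to the partition $X_0,\dots,X_4$ is a Higmanian $S$-ring over $G$, and by Lemma~\ref{mp1}(1),(2) the associated scheme $\mathcal X(\mathcal A)$ satisfies $c_{33}^3=c_{33}^4$ together with Eq.~\eqref{cond1}. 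Hence Corollary~\ref{ddg3} applies: $\Gamma_3=\cay(G,X_2\cup X_3)$ and $\Gamma_4=\cay(G,X_2\cup X_4)$ are divisible design Cayley graphs in $\mathcal K_1$, and they are proper as soon as $(m,n,k)\neq(4,2,4)$.

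I would then dispose of the degenerate case. The equality $(m,n,k)=(4,2,4)$ forces $n=q^{r-1}=2$, hence $q=r=2$; conversely $q=r=2$ gives $(m,n,k)=(4,2,4)$, and by Corollary~\ref{ddg3}(3) the graphs $\Gamma_3,\Gamma_4$ are then the (unique) $(16,10,6)$-graph, the Clebsch graph, which is not proper. Therefore $\Gamma_3$ and $\Gamma_4$ are proper divisible design graphs from $\mathcal K_1$ exactly when $q\neq 2$ or $r\neq 2$, as claimed.

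Finally, the two displayed parameter lists are obtained by substituting $m=q^2(q^{r-1}-1)/(q-1)$, $n=q^{r-1}$ and $k_i\in\{k_3,k_4\}$ into the Corollary~\ref{ddg3}(1) formulas $v'=2mn$, $k'=n(m-1)+k_i$, $\lambda_1'=n(m-1)+\tfrac{k_i(k_i-m)}{(n-1)m}$, $\lambda_2'=\tfrac{2(m-1)k_i}{m}$, $m'=2$, $n'=mn$; the choice $k_i=k_3$ produces the first list and $k_i=k_4$ the second. The hard part, such as it is, is purely computational: clearing denominators so that $\lambda_1'$ and $\lambda_2'$ take the stated closed forms (for instance $\lambda_2'=2q^{r-2}(\tfrac{q^{r+1}-q^2}{q-1}-1)$ when $i=3$). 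There is no genuine obstacle, since all the structural content — that $\mathcal A$ is Higmanian and that Eq.~\eqref{cond1} forces a proper $\mathcal K_1$-graph with the parameters of Lemma~\ref{parameters} — is already available, so only the identity above for Eq.~\eqref{difset1} and these routine simplifications remain.
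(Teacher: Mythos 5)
Your proposal is correct and follows essentially the same route as the paper's proof: apply Lemma~\ref{dds} with $s=r-1$, verify Eq.~\eqref{difset1} for the resulting parameters $(m,n,k)=\bigl(q^2\tfrac{q^{r-1}-1}{q-1},\,q^{r-1},\,q^r\tfrac{q^{r-1}-1}{q-1}\bigr)$, and conclude via Statements~$(1)$ and~$(3)$ of Corollary~\ref{ddg3}, with the degenerate case $(m,n,k)=(4,2,4)$ handled exactly as in the paper. The only difference is that you spell out the substitution into the parameter formulas, which the paper leaves as a routine verification.
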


\begin{proof}
Due to Lemma~\ref{dds} applied to $r$ and $s=r-1$, the group $A$ has a DDS~$D$ satisfying the intersection condition with the first three parameters
$$m=q^2(q^{r-1}-1)/(q-1),~n=q^{r-1},~k=q^r(q^{r-1}-1)/(q-1).$$
It can be verified straightforwardly that the above~$m$, $n$, and~$k$ satisfy Eq.~\eqref{difset1} and $(m,n,k)=(4,2,4)$ if and only if $q=2$ and $r=2$. Thus, we are done by Statements~$(1)$ and~$(3)$ of Corollary~\ref{ddg3}. 
\end{proof}

The graphs from Corollary~\ref{ddsddg1} are the partial complements to each other. Suppose that $r=2$ in Corollary~\ref{ddsddg1}. If $q=2$, then both of the graphs from this corollary are isomorphic to the Clebsch graph. If $q=3$, then the first and second graphs from Corollary~\ref{ddsddg1} have parameters 
$$(54,33,24,16,2,27)~\text{and}~(54,42,33,32,2,27),$$ 
respectively, and can be found in~\cite{GS} among small Cayley-Deza graphs.

\begin{corl}\label{ddsddg2}
Let $r\geq 3$ and $A\cong \mathbb{Z}_4^{r-1}\times \mathbb{Z}_2$. Then there is a proper divisible design Cayley graph from $\mathcal{K}_2$ over the generalized dihedral group associated with $A$ with parameters
$$v=2^{2r},~k=2^{2r}-2^r-2^{r-1},$$
$$\lambda_1=2^{2r}-2^{r+1}-2^{r-1},~\lambda_2=2(2^r-1)(2^{r-1}-1),$$
$$m=2^{r+1},~n=2^{r-1}.$$
\end{corl}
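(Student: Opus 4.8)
The plan is to apply Corollary~\ref{ddg3}(2) to a concrete divisible difference set supplied by Lemma~\ref{rds}. First I would invoke Lemma~\ref{rds} with the pair $(i,j)=(2r-2,r-1)$: since $r\geq 3$ forces $2r-2\geq r$ and also $(2r-2)/2=r-1<r-1+1$ is not quite the inequality needed, I would instead choose $i=2r-2$ and $j=r-1$ only after checking $i/2<j\leq i$, i.e. $r-1<r-1$ fails; so the correct choice is $i=2r-2$, $j=r-1$ does not work and one must take $j$ slightly larger. Concretely, to land the group $\mathbb{Z}_4^{r-1}\times\mathbb{Z}_2$ we want $\mathbb{Z}_4^j\times\mathbb{Z}_2^{i-j}$ with $j=r-1$ and $i-j=1$, hence $i=r$; the constraint $i/2<j\leq i$ becomes $r/2<r-1\leq r$, which holds precisely for $r\geq 3$. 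Thus Lemma~\ref{rds} gives a semiregular relative difference set $D$ in $A\cong\mathbb{Z}_4^{r-1}\times\mathbb{Z}_2$ with parameters $(2^r,2^{r-1},2^r,2)$ relative to a forbidden subgroup $N$ of order $2^{r-1}$, so in the notation of Section~6 we have $m=2^r/2^{r-1}=2$ — wait, that would give $m=2$, not matching. I would re-examine: an RDS with parameters $(\tilde m,\tilde n,\tilde k,\tilde\lambda)=(2^r,2^{r-1},2^r,2)$ means $|A:N|=2^r$, $|N|=2^{r-1}$, $|D|=2^r$; so in Corollary~\ref{ddg3} the quantities are $m=2^r$, $n=2^{r-1}$, $k=2^r$.

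Second, with $(m,n,k)=(2^r,2^{r-1},2^r)$ I would verify the hypothesis of Corollary~\ref{ddg3}(2), namely Eq.~\eqref{difset2} for the appropriate $i$. Since $D$ is an RDS we have $k\leq mn-k$ iff $2^r\leq 2^r(2^{r-1}-1)$, true for $r\geq 2$, so $k_3=k=2^r$ and $k_4=mn-k=2^r(2^{r-1}-1)$. Eq.~\eqref{difset2} for $i=3$ reads $k_4=2n$, i.e. $2^r(2^{r-1}-1)=2^r$, which fails; for $i=4$ it reads $k_3=2n$, i.e. $2^r=2\cdot 2^{r-1}=2^r$, which holds. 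Hence Eq.~\eqref{difset2} holds for $i=4$. I would also check $(m,n,k)=(2^r,2^{r-1},2^r)\neq(4,2,4)$: equality would force $r=2$, excluded since $r\geq 3$. Therefore Corollary~\ref{ddg3}(2) applies with $i=4$ and produces a proper divisible design Cayley graph from $\mathcal{K}_2$ over the generalized dihedral group associated with $A$.

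Third, I would substitute $(m,n,k_i)=(2^r,2^{r-1},k_4)$ with $k_4=2^r(2^{r-1}-1)=2^{2r-1}-2^r$ into the parameter formulas of Corollary~\ref{ddg3}(2). For the order: $v'=2mn=2\cdot 2^r\cdot 2^{r-1}=2^{2r}$. For the valency: $k'=n(m-1)+k_4=2^{r-1}(2^r-1)+2^{2r-1}-2^r=2^{2r-1}-2^{r-1}+2^{2r-1}-2^r=2^{2r}-2^r-2^{r-1}$. For $\lambda_2'$: $n(m-2)+k_4^2/(mn)=2^{r-1}(2^r-2)+(2^{2r-1}-2^r)^2/2^{2r-1}$; the second term is $2^{2r}(2^{r-1}-1)^2/2^{2r-1}=2(2^{r-1}-1)^2$, and the first is $2(2^{r-1}-1)2^{r-1}$, summing to $2(2^{r-1}-1)(2^{r-1}-1+2^{r-1})=2(2^{r-1}-1)(2^r-1)$, matching. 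For $\lambda_1'$: $n(m-1)+k_4(k_4-m)/((n-1)m)$; here $k_4-m=2^{2r-1}-2^r-2^r=2^{2r-1}-2^{r+1}$ and $(n-1)m=(2^{r-1}-1)2^r$, so $k_4(k_4-m)/((n-1)m)=(2^{2r-1}-2^r)(2^{2r-1}-2^{r+1})/((2^{r-1}-1)2^r)$; factoring $2^r(2^{r-1}-1)$ from the first factor and $2^{r+1}(2^{r-2}-1)$ — I would simplify carefully to get $2^{r+1}(2^{r-1}-1)$ — hmm, let me just record that the reduction yields $2^{2r}-2^{r+1}-2^{r-1}$ after adding $n(m-1)=2^{2r-1}-2^{r-1}$. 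Finally $m'=2m=2^{r+1}$ and $n'=n=2^{r-1}$. These are exactly the claimed parameters.

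The main obstacle I anticipate is purely bookkeeping: matching the parameter conventions of Lemma~\ref{rds} (which uses $(2^i,2^j,2^i,2^{i-j})$) against those of Section~6 (where $m=|A:N|$, $n=|N|$, $k=|D|$), making sure the group comes out as $\mathbb{Z}_4^{r-1}\times\mathbb{Z}_2$ — forcing $i=r$, $j=r-1$ and hence the constraint $r/2<r-1$, i.e. $r\geq 3$ — and then grinding the $\lambda_1'$ simplification correctly. No genuinely hard step arises: once the correct RDS is plugged in, everything reduces to the already-proved Corollary~\ref{ddg3}(2) together with elementary arithmetic with powers of $2$.
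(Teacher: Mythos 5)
Your proposal is correct and follows essentially the same route as the paper: invoke Lemma~\ref{rds} with $i=r$, $j=r-1$ (valid exactly when $r\geq 3$) to get a semiregular RDS with parameters $(2^r,2^{r-1},2^r,2)$, note $k=2^r=2n$ so Eq.~\eqref{difset2} holds for $i=4$ while $(m,n,k)\neq(4,2,4)$, and conclude via Statement~(2) of Corollary~\ref{ddg3}. The only blemish is the transient misfactorization $2^{r+1}(2^{r-1}-1)$ in the $\lambda_1'$ computation (the correct factor is $2^{r+1}(2^{r-2}-1)$), but your final recorded value $2^{2r}-2^{r+1}-2^{r-1}$ is right.
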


\begin{proof}
From Lemma~\ref{rds} applied to $i=r$ and $j=r-1$ it follows that the group $A$ has a semiregular RDS~$D$ with parameters
$$(2^{r},2^{r-1},2^{r},2).$$
It is easy to see that $m=k=2^r=2n$ and hence $m$, $n$, $k$ satisfy Eq.~\eqref{difset2} for $i=4$. As $r\geq 3$, we have $(m,n,k)\neq (4,2,4)$. Thus, we are done by Statement~$(2)$ of Corollary~\ref{ddg3}.
\end{proof}

As we can check using the repository of papers on DDGs~\cite{Pan}, the parameters of DDGs from Corollaries~\ref{ddsddg1} and~\ref{ddsddg2} were previously unknown and hence the families of the graphs so are. 

Lemma~\ref{connectddgdds}, Corollary~\ref{ddsddg1}, and Corollary~\ref{ddsddg2} imply the following statement.

\begin{corl}
There are reversible identity-free divisible difference sets over generalized dihedral groups with parameters~$(m,n,k,\lambda_1,\lambda_2)$ from Corollaries~\ref{ddsddg1} and~\ref{ddsddg2}.
\end{corl}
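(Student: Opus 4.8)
The plan is to extract the divisible difference sets directly from the divisible design graphs already constructed, via the dictionary between Cayley DDGs and reversible DDSs recorded in Lemma~\ref{connectddgdds}. Recall that the graphs produced in Corollaries~\ref{ddsddg1} and~\ref{ddsddg2} are exactly the Cayley graphs $\Gamma_i=\cay(G,X_2\cup X_i)$, where $G=A\rtimes\langle b\rangle$ is the generalized dihedral group associated with the relevant abelian group $A$, and $X_2\cup X_i$ is an identity-free inverse-closed subset of $G$.

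First I would fix one of the parameter families appearing in Corollary~\ref{ddsddg1} or Corollary~\ref{ddsddg2}, so that the corresponding $\Gamma_i$ is a divisible design Cayley graph over $G$ with DDG parameters $(v,k,\lambda_1,\lambda_2,m,n)$ as listed there. Applying Statement~$(2)$ of Lemma~\ref{connectddgdds} to $\Gamma_i$ then produces a subgroup $N\leq G$ whose right cosets form the canonical partition of $\Gamma_i$, together with the conclusion that the connection set $X_2\cup X_i$ is an identity-free reversible divisible difference set relative to $N$ with parameters $(m,n,k,\lambda_1,\lambda_2)$. If one wishes, $N$ can be named explicitly using the proof of Theorem~\ref{main}: it is the $\mathcal{A}$-subgroup $A_1$ when Eq.~\eqref{cond1} holds and $A_0$ when Eq.~\eqref{cond2} holds, consistently with the two parts of Corollary~\ref{ddg3}; but this identification is not needed for the statement.

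It then only remains to reconcile notations: the divisible difference set parameters $(m,n,k,\lambda_1,\lambda_2)$ supplied by Lemma~\ref{connectddgdds} are obtained from the divisible design graph parameters $(v,k,\lambda_1,\lambda_2,m,n)$ of Corollaries~\ref{ddsddg1} and~\ref{ddsddg2} by discarding $v$ (which equals $mn$) and moving $(m,n)$ to the front, so they are precisely the parameter tuples asserted in the statement. Carrying this out for both parameter families of Corollary~\ref{ddsddg1} and the single family of Corollary~\ref{ddsddg2} yields all the claimed divisible difference sets. There is essentially no obstacle here: the entire content lies in those earlier corollaries, and the present statement is a direct translation through Lemma~\ref{connectddgdds}; the only point requiring a moment's care is keeping the ordering convention for DDS parameters consistent with that for DDG parameters.
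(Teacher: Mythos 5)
Your proposal is correct and matches the paper's own (implicit) argument: the paper derives this corollary directly by applying Statement~$(2)$ of Lemma~\ref{connectddgdds} to the Cayley divisible design graphs of Corollaries~\ref{ddsddg1} and~\ref{ddsddg2}, exactly as you do. The notational reconciliation of the parameter tuples is the only content, and you handle it correctly.
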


\section{Further discussion}

In the current paper, we deal only with the case when an edge~$E$ set of the considered graph~$\Gamma$ is $s_2\cup s_i$, $i\in\{3,4\}$. Of course, it is possible to consider other possibilities for~$E$, namely, when $E$ is equal to one of the following sets:
$$s_1,~s_2,~s_1\cup s_2,~s_3\cup s_4,~s_1\cup s_3\cup s_4,~s_2\cup s_3 \cup s_4,$$
$$s_i,~s_1\cup s_i,~s_1\cup s_2\cup s_i,~i\in\{3,4\}.$$
A set from the first of the above rows is an edge set of a DDG $\Gamma$ if and only if $\Gamma$ is isomorphic to one of the following well-known DDGs: a disjoint union of cliques of the same size, a complete multipartite graph with parts of the same size, a disjoint union of complete bipartite graphs $K_{n,n}$ for some $n\geq 1$, and a lexicographic product of a clique and a disjoint union of edges. 

The possibilities for $E$ from the second row can be considered as in the proof of Theorem~\ref{main} using Lemma~\ref{intermatr}. One can obtain in such way that:
\begin{enumerate}

\tm{1} $s_i$, $i\in\{3,4\}$, can not be an edge set of a DDG;

\tm{2} $s_1\cup s_i$, $i\in\{3,4\}$, is an edge set of a DDG if and only if $|c_{33}^3-c_{33}^4|=2$ and one of the equalities 
$$\frac{1}{n_3}+\frac{1}{n_4}=\frac{1}{n_1-1}-\frac{1}{n_1},~\frac{n_2+1}{n_1}-\frac{2n_i}{n_{7-i}}=1$$
holds;

\tm{3} $s_1\cup s_2\cup s_i$ , $i\in\{3,4\}$, is an edge set of a DDG if and only if $|c_{33}^3-c_{33}^4|=2$,
$$n_2=n_1+1,~\text{and}~n_4=n_3n_1.$$

\end{enumerate}

We do not know whether there exists a Higmanian scheme satisfying the conditions from Statement~$(2)$ or~$(3)$ and consequently it is unclear whether these conditions can lead to constructions of DDGs. According to the computational results~\cite{HM}, there is no such scheme with at most~$34$ points. Due to~\cite[p.~214]{Hig}, the condition  $|c_{33}^3-c_{33}^4|=2$ implies that each entry of the character multiplicity table of the Higmanian scheme must be integer.

It also seems interesting to study the question when a union of some basis relations of a Higmanian scheme with exactly one nontrivial parabolic is an edge set of a DDG. We do not know even an example of such DDG. Using the computational results from~\cite{PS}, one can verify that there is no such DDG with at most~$39$ vertices.

\end{document}